\def\subsection{\@startsection{subsection}{2}%
  \z@{.7\linespacing\@plus.7\linespacing}{.2\linespacing}%
  {\centering\normalfont\scshape}}
\newtheorem{theorem}{Theorem}[section]
\newtheorem{proposition}[theorem]{Proposition}
\newtheorem{corollary}[theorem]{Corollary}
\newtheorem{lemma}[theorem]{Lemma}
\theoremstyle{definition}
\newtheorem{definition}[theorem]{Definition}
\newtheorem{example}[theorem]{Example}
\newtheorem{remark}[theorem]{Remark}
\newtheorem*{remark*}{Remark}
\renewcommand{\AA}{\mathbb{A}}
\newcommand{\FF}{\mathbb{F}}
\newcommand{\CC}{\mathbb{C}}
\newcommand{\RR}{\mathbb{R}}
\newcommand{\Fc}{\mathcal{F}}
\newcommand{\Ic}{\mathcal{I}}
\newcommand{\Jc}{\mathcal{J}}
\newcommand{\1}{\mathbbm{1}}
\newcommand{\e}{\epsilon}
\newcommand{\udelta}{\underline{\delta}}
\newcommand{\ukappa}{\underline{\kappa}}
\newcommand{\ulambda}{\underline{\lambda}}
\newcommand{\umu}{\underline{\mu}}
\newcommand{\unu}{\underline{\nu}}
\newcommand{\sums}[1]{\sum_{\substack{#1}}}
\newcommand{\ang}[1]{\langle#1\rangle}
\newcommand{\norm}[1]{\lVert#1\rVert}
\newcommand{\abs}[1]{\lvert#1\rvert}
\newcommand{\bigabs}[1]{\big\lvert#1\big\rvert}
\DeclareMathOperator{\Ind}{Ind}
\DeclareMathOperator{\rk}{rk}
\DeclareMathOperator{\GL}{GL}
\DeclareMathOperator{\GammaL}{\Gamma L}
\DeclareMathOperator{\SL}{SL}
\DeclareMathOperator{\Par}{Par}
\DeclareMathOperator{\type}{type}
\DeclareMathOperator{\Sym}{Sym}
\renewcommand{\Re}{\operatorname{Re}}
\renewcommand{\Im}{\operatorname{Im}}
\begin{document}

\title{Transitivity in finite general linear groups}

\author{Alena Ernst and Kai-Uwe Schmidt}
\address{Department of Mathematics, Paderborn University, Warburger Str.\ 100, 33098 Paderborn, Germany.}
\email{alena.ernst@math.upb.de}
\email{kus@math.upb.de}

\thanks{Funded by the Deutsche Forschungsgemeinschaft (DFG, German Research Foundation) -- Project number 459964179.}

\date{16 September 2022}

\subjclass[2010]{05B99, 05E30, 20C33}

\begin{abstract}

It is known that the notion of a transitive subgroup of a permutation group $G$ extends naturally to subsets of $G$. We consider subsets of the general linear group $\GL(n,q)$ acting transitively on flag-like structures, which are common generalisations of $t$-dimensional subspaces of $\FF_q^n$ and bases of $t$-dimensional subspaces of~$\FF_q^n$. We give structural characterisations of transitive subsets of $\GL(n,q)$ using the character theory of $\GL(n,q)$ and interprete such subsets as designs in the conjugacy class association scheme of $\GL(n,q)$. In particular we generalise a theorem of Perin on subgroups of $\GL(n,q)$ acting transitively on $t$-dimensional subspaces. We survey transitive subgroups of $\GL(n,q)$, showing that there is no subgroup of $\GL(n,q)$ with $1<t<n$ acting transitively on $t$-dimensional subspaces unless it contains $\SL(n,q)$ or is one of two exceptional groups. On the other hand, for all fixed $t$, we show that there exist nontrivial subsets of $\GL(n,q)$ that are transitive on linearly independent $t$-tuples of $\FF_q^n$, which also shows the existence of nontrivial subsets of $\GL(n,q)$ that are transitive on more general flag-like structures. We establish connections with orthogonal polynomials, namely the Al-Salam-Carlitz polynomials, and generalise a result by Rudvalis and Shinoda on the distribution of the number of fixed points of the elements in~$\GL(n,q)$. Many of our results can be interpreted as $q$-analogs of corresponding results for the symmetric group.
\end{abstract}

\maketitle

\thispagestyle{empty}


\section{Introduction}

A subgroup $G$ of the symmetric group $\Sym(n)$ is \emph{$t$-homogeneous} if $G$ is transitive on the subsets of $\{1,2,\dots,n\}$ with $t$ elements. Livingstone and Wagner~\cite{LivWag1965} proved the following result.
\begin{theorem}[{\cite{LivWag1965}}]
\label{thm:LivWag}
Let $G$ be a subgroup of $\Sym(n)$ that is $t$-homogeneous for some~$t$ satisfying $1\le t\le n/2$. Then $G$ is also $(t-1)$-homogeneous.
\end{theorem}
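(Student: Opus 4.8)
The plan is to prove Theorem~\ref{thm:LivWag} by a counting/linear-algebra argument comparing the permutation actions of $G$ on $t$-subsets and on $(t-1)$-subsets of $\{1,2,\dots,n\}$. Concretely, I would consider the $0$-$1$ inclusion matrix $W_{t-1,t}$ whose rows are indexed by the $(t-1)$-subsets, columns by the $t$-subsets, with a $1$ in position $(A,B)$ whenever $A\subset B$. This matrix intertwines the two permutation representations of $\Sym(n)$, and a classical fact (essentially the observation of Livingstone and Wagner, later made transparent by Kantor) is that over $\QQ$ it has full rank equal to $\binom{n}{t-1}$ whenever $t-1\le n-t$, i.e. $t\le n/2$. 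Granting this rank statement, the proof is short: the number of $G$-orbits on a set equals the multiplicity of the trivial representation in the corresponding permutation module, equivalently the dimension of the space of $G$-fixed vectors.

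The key steps, in order, are as follows. First I would recall that, writing $M_{t}$ for the permutation $\QQ G$-module on $t$-subsets, the number of $G$-orbits on $t$-subsets is $\dim M_t^{G}$, the dimension of the fixed space; $t$-homogeneity of $G$ says $\dim M_t^{G}=1$. Second, since $W_{t-1,t}$ commutes with the $G$-action, it restricts to a linear map $M_t^{G}\to M_{t-1}^{G}$; I must show this restricted map is surjective, which combined with $\dim M_t^{G}=1$ forces $\dim M_{t-1}^{G}\le 1$, and then $\dim M_{t-1}^{G}\ge 1$ trivially gives equality, i.e. $(t-1)$-homogeneity. Third, to get surjectivity of the restricted map it suffices that $W_{t-1,t}$ itself is surjective as a map $M_t\to M_{t-1}$, because the transpose $W_{t-1,t}^{\mathsf T}$ is injective (full column rank $\binom{n}{t-1}$ when $t\le n/2$) hence $W_{t-1,t}$ has full row rank, so it is onto $M_{t-1}$; and a surjection of $G$-modules restricts to a surjection on fixed spaces when the field has characteristic $0$ (one can split off a $G$-equivariant section using Maschke, or simply average). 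Fourth, I would prove the rank statement $\rk_{\QQ} W_{t-1,t}=\binom{n}{t-1}$ for $t\le n/2$: the standard argument computes the eigenvalues of the positive semidefinite matrix $W_{t-1,t}W_{t-1,t}^{\mathsf T}$ using the decomposition of $M_{t-1}$ into irreducible $\Sym(n)$-modules $S^{(n-j,j)}$ for $0\le j\le t-1$, on each of which $W_{t-1,t}W_{t-1,t}^{\mathsf T}$ acts as an explicit nonzero scalar (a product of the form $\prod(t-i)(n-t-i+1)$ type expression) precisely when $j\le t-1$ and $t-1\le n-t$.

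The main obstacle, and the part that needs real work rather than bookkeeping, is the rank computation in the fourth step: establishing that $W_{t-1,t}$ has full row rank over $\QQ$ (equivalently over $\RR$) exactly in the range $t\le n/2$. One clean route avoids representation theory entirely: exhibit an explicit left inverse, or show directly that if $\sum_A c_A \mathbbm{1}_A$ lies in the kernel of $W_{t-1,t}^{\mathsf T}$ then all $c_A=0$, by an inclusion-exclusion / downward induction on $\abs{A}$ argument that uses $t-1\le n-t$ to guarantee there is ``enough room'' outside a given small set. I would present this elementary version to keep the proof self-contained. Once the rank fact is in hand, the homological/averaging part (restricting a surjection to fixed spaces in characteristic $0$) is routine, and the one-line conclusion $1=\dim M_t^{G}\ge \dim M_{t-1}^{G}\ge 1$ finishes the proof.
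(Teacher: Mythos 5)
The paper does not prove this statement at all: Theorem~\ref{thm:LivWag} is quoted from Livingstone and Wagner \cite{LivWag1965} purely as motivation, and the paper's own contributions are the $q$-analogues proved later via the character theory of $\GL(n,q)$ and association schemes. So there is no ``paper proof'' to match; judged on its own, your argument is correct and is the standard modern proof of the theorem (the inclusion-matrix argument going back to Kantor, rather than Livingstone and Wagner's original group-theoretic induction). The logical skeleton is sound: $t$-homogeneity is $\dim M_t^G=1$; the inclusion map $W_{t-1,t}\colon M_t\to M_{t-1}$ is $G$-equivariant; full row rank for $t\le n/2$ plus averaging (characteristic $0$) gives surjectivity on fixed spaces, hence $1\le\dim M_{t-1}^G\le\dim M_t^G=1$. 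Two small points to tidy up: the equivalence you state, ``$t-1\le n-t$, i.e.\ $t\le n/2$,'' is not exact ($t-1\le n-t$ is $t\le(n+1)/2$), though the implication you actually need, $t\le n/2\Rightarrow t-1\le n-t$, does hold; and the rank fact $\rk_{\QQ}W_{t-1,t}=\binom{n}{t-1}$ for $t-1+t\le n$ is the one piece of real content, which you only sketch --- either of your two proposed routes (the eigenvalues of $W_{t-1,t}W_{t-1,t}^{\top}$ on the $S^{(n-j,j)}$ constituents, or a direct kernel/inclusion--exclusion argument) is standard and works, but it must be written out for the proof to be self-contained. It is worth noting that your fixed-space argument is exactly the $d=1$ shadow of the machinery the paper develops for $\GL(n,q)$ (orbit counting as multiplicity of the trivial character, cf.\ Theorem~\ref{thm:designs_dual_dist} and Corollary~\ref{cor:LVP-theorem}), whereas the paper itself, following Martin and Sagan \cite{MarSag2007}, obtains the symmetric-group statements by decomposing permutation characters; your route buys a more elementary, representation-light proof of the classical case, while the paper's character-theoretic formulation is what generalises to subsets (not just subgroups) and to flags over $\FF_q$.
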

\par
This theorem was generalised by Martin and Sagan~\cite{MarSag2007} in various ways. Their first generalisation replaces subgroups of $\Sym(n)$ by subsets of $\Sym(n)$. Let $G$ be a group acting on a set $\Omega$. We say that a subset $Y$ of $G$ is \emph{transitive} on $\Omega$ if there is a constant~$r$ such that the following holds. For all $a,b\in\Omega$, there are exactly $r$ elements $g\in Y$ such that $ga=b$. If $Y$ is a subgroup of $G$, then this notion coincides with that of a transitive group action of $Y$ on $\Omega$. The second generalisation of~\cite{MarSag2007} replaces subsets of $\Omega$ by set partitions of $\Omega$. An (integer) \emph{partition} of a natural number $n$ is a sequence $\lambda=(\lambda_1,\lambda_2,\dots)$ of nonnegative integers that sum up to~$n$ and satisfy $\lambda_1\ge\lambda_2\ge\cdots$; these numbers are called the \emph{parts} of $\lambda$. For two partitions $\lambda=(\lambda_1,\lambda_2,\dots)$ and $\mu=(\mu_1,\mu_2,\dots)$ of $n$, we say that $\lambda$ \emph{dominates} $\mu$ and write $\mu\unlhd\lambda$ if
\begin{equation}
\sum_{i=1}^k\mu_i\le \sum_{i=1}^k\lambda_i\quad\text{for each $k\ge 1$}.   \label{eqn:dominance_order}
\end{equation}
Finally, for a  partition $\lambda=(\lambda_1,\lambda_2,\dots)$ of $n$, a \emph{$\lambda$-partition} is an ordered partition of the set $\{1,2,\dots,n\}$ into subsets of cardinality $\lambda_1,\lambda_2,\dots$. The following is one of the main results in~\cite{MarSag2007}.
\begin{theorem}[{\cite{MarSag2007}}]
Let $Y$ be a subset of $\Sym(n)$ that is transitive on $\sigma$-partitions. Then~$G$ is also transitive on $\tau$-partitions for all $\tau$ satisfying $\sigma\unlhd\tau$.
\end{theorem}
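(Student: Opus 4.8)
The plan is to follow the standard representation-theoretic route for $\Sym(n)$, working inside the permutation modules attached to $\lambda$-partitions; this will in fact produce an ``if and only if'' criterion for transitivity, from which the theorem is immediate. Write $G=\Sym(n)$ and, for a partition $\lambda$ of $n$, let $\Omega_\lambda$ be the set of $\lambda$-partitions. Because the blocks of a $\lambda$-partition are \emph{ordered}, $G$ acts transitively on $\Omega_\lambda$ with point stabiliser the Young subgroup $S_\lambda=S_{\lambda_1}\times S_{\lambda_2}\times\cdots$, so the permutation module $\CC[\Omega_\lambda]$ is the Young module $M^\lambda=\Ind_{S_\lambda}^{G}\1$. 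Young's rule gives $M^\lambda\cong\bigoplus_\mu K_{\mu\lambda}\,S^\mu$, where $K_{\mu\lambda}$ is a Kostka number and $K_{\mu\lambda}>0$ if and only if $\lambda\unlhd\mu$; in particular the trivial module $S^{(n)}$ occurs exactly once, as it must since $G$ is transitive on $\Omega_\lambda$.

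The first step is a transitivity criterion for subsets. For $Y\subseteq G$ let $A_Y=\sum_{g\in Y}\pi_\lambda(g)$ act on $\CC[\Omega_\lambda]$, where $\pi_\lambda$ denotes the permutation representation; in the basis indexed by $\Omega_\lambda$ the $(\omega',\omega)$-entry of $A_Y$ is $\abs{\{g\in Y:g\omega=\omega'\}}$, so $Y$ is transitive on $\Omega_\lambda$ exactly when $A_Y=rJ$ for a constant $r$, with $J$ the all-ones matrix. Since $\pi_\lambda(g)$ fixes the all-ones vector $\1_{\Omega_\lambda}$ for every $g$, we have $A_Y\1_{\Omega_\lambda}=\abs{Y}\1_{\Omega_\lambda}$; as $J$ acts as $\abs{\Omega_\lambda}$ on $\1_{\Omega_\lambda}$ and as $0$ on $\1_{\Omega_\lambda}^{\perp}$, transitivity of $Y$ is equivalent to $A_Y$ annihilating the $G$-submodule $\1_{\Omega_\lambda}^{\perp}$, i.e.\ annihilating the $S^\mu$-isotypic component of $M^\lambda$ for every $\mu\neq(n)$. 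Now $A_Y$ preserves each isotypic component (these being $G$-submodules), and identifying the $S^\mu$-isotypic component with $S^\mu\otimes\CC^{K_{\mu\lambda}}$ one sees that $A_Y$ acts there as $B_\mu\otimes\mathrm{id}$, where $B_\mu:=\sum_{g\in Y}\rho_\mu(g)$ is an endomorphism of $S^\mu$ depending on $\mu$ and $Y$ but \emph{not} on $\lambda$. Hence $A_Y$ kills that component if and only if $B_\mu=0$. Together with Young's rule this gives the criterion
\[
Y \text{ is transitive on }\lambda\text{-partitions}\quad\Longleftrightarrow\quad B_\mu=0\ \text{ for every }\mu\neq(n)\text{ with }\lambda\unlhd\mu .
\]

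The theorem then follows immediately: if $Y$ is transitive on $\sigma$-partitions, the criterion gives $B_\mu=0$ for all $\mu\neq(n)$ with $\sigma\unlhd\mu$; assuming $\sigma\unlhd\tau$, transitivity of the dominance order yields $\{\mu:\tau\unlhd\mu\}\subseteq\{\mu:\sigma\unlhd\mu\}$, so $B_\mu=0$ for all $\mu\neq(n)$ with $\tau\unlhd\mu$, and the criterion shows $Y$ is transitive on $\tau$-partitions.

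The only real content is the criterion of the second paragraph, and the point that needs care is that vanishing on the $S^\mu$-isotypic component is governed by the single operator $B_\mu$ \emph{uniformly in $\lambda$} --- this is precisely what lets information obtained from $\sigma$-partitions be transported to $\tau$-partitions --- together with the use of Young's rule to convert ``$S^\mu$ occurs in $M^\lambda$'' into the purely combinatorial condition $\lambda\unlhd\mu$. Identifying $\CC[\Omega_\lambda]$ with $M^\lambda$, checking $A_Y\1_{\Omega_\lambda}=\abs{Y}\1_{\Omega_\lambda}$, and invoking transitivity of $\unlhd$ are all routine. (As a sanity check, taking $\sigma=(n-t,t)$ and $\tau=(n-t+1,t-1)$ with $t\le n/2$ recovers the subset version of the Livingstone--Wagner theorem, transitivity on $(n-t,t)$-partitions being the natural ``transitive analogue'' of $t$-homogeneity.)
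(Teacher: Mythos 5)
Correct, and essentially the paper's approach: the paper itself only cites this statement from Martin--Sagan without proof, but your argument is precisely the $\Sym(n)$ version of how the paper proves its $q$-analog, namely a character-theoretic characterisation of transitivity via the decomposition of the permutation module (compare Theorem~\ref{thm:designs_dual_dist} and Proposition~\ref{pro:decomposition_perm_char}, where Young's rule likewise converts ``occurs as a constituent'' into a dominance condition), followed by monotonicity in the dominance order exactly as in Corollary~\ref{cor:LVP-theorem}. Your operators $B_\mu=\sum_{g\in Y}\rho_\mu(g)$ carry the same information as the dual distribution entries $a'_\mu$ used in the paper (taking $\rho_\mu$ unitary, $a'_\mu$ is proportional to $\operatorname{tr}(B_\mu^*B_\mu)$, so $a'_\mu=0$ if and only if $B_\mu=0$), so your transitivity criterion coincides with the design characterisation there.
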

\par
In fact,~\cite{MarSag2007} systematically studies subsets of $\Sym(n)$ that are transitive on $\sigma$-partitions using the character theory of $\Sym(n)$. In many aspects, this paper studies $q$-analogous problems, namely we replace $\Sym(n)$ by the general linear group $\GL(n,q)$, consisting of the invertible $n\times n$ matrices with entries in $\FF_q$. Our starting point is the following $q$-analog of Theorem~\ref{thm:LivWag}, proved by Perin~\cite{Per1972} (who attributed it to an unpublished result by McLaughlin). Henceforth a \emph{$t$-space} is a $t$-dimensional subspace of $\FF_q^n$. 
\begin{theorem}[\cite{Per1972}]
Let $G$ be a subgroup of $\GL(n,q)$ that is transitive on $t$-spaces for some~$t$ satisfying $1\le t\le n/2$. Then $G$ is also transitive on $(t-1)$-spaces.
\end{theorem}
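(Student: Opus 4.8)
The plan is to transport the original argument of Livingstone and Wagner to $\GL(n,q)$, using the representation theory of its action on subspaces. For $0\le j\le n$ write $\Omega_j$ for the set of $j$-spaces of $\FF_q^n$, let $M_j=\CC\Omega_j$ be the associated permutation $\CC\GL(n,q)$-module, and let $\pi_j$ denote its character. The number of $G$-orbits on $\Omega_j$ equals $\langle\pi_j|_G,\1\rangle_G$, so $G$ is transitive on $\Omega_j$ precisely when $\langle\pi_j|_G,\1\rangle_G=1$, while in any case $\langle\pi_j|_G,\1\rangle_G\ge 1$. Thus, assuming $\langle\pi_t|_G,\1\rangle_G=1$, it suffices to prove $\langle\pi_{t-1}|_G,\1\rangle_G\le 1$.

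The crux is the following structural fact: for $1\le t\le n/2$, the class function $\pi_t-\pi_{t-1}$ is a character of $\GL(n,q)$. Equivalently, $M_{t-1}$ is isomorphic to a $\GL(n,q)$-submodule of $M_t$; equivalently, the incidence map $\iota\colon M_{t-1}\to M_t$ sending a $(t-1)$-space $U$ to the sum of all $t$-spaces containing $U$ is injective. I would establish this by invoking that the stabiliser of a $t$-space is a maximal parabolic subgroup for which the permutation module $M_t$ is multiplicity-free, with decomposition $M_t\cong V_0\oplus V_1\oplus\cdots\oplus V_{\min(t,\,n-t)}$ into pairwise non-isomorphic irreducibles $V_k$ (the $q$-analog of the two-row Specht modules). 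For $t\le n/2$ this gives $M_{t-1}\cong V_0\oplus\cdots\oplus V_{t-1}$ embedding into $M_t\cong V_0\oplus\cdots\oplus V_t$, so that $\pi_t-\pi_{t-1}$ is in fact the irreducible character of $V_t$. Alternatively one can appeal directly to the $q$-analog of the inclusion-matrix rank theorem: the $\QQ$-rank of the $(t-1)$-versus-$t$ inclusion matrix equals $\binom{n}{t-1}_q=\dim M_{t-1}$ whenever $2t-1\le n$, whence $\iota$ is injective.

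Granting this, the theorem follows formally. Restricting to $G$, the class function $(\pi_t-\pi_{t-1})|_G$ is a character of $G$, hence $\langle(\pi_t-\pi_{t-1})|_G,\1\rangle_G\ge 0$, that is, $\langle\pi_{t-1}|_G,\1\rangle_G\le\langle\pi_t|_G,\1\rangle_G=1$. Combined with $\langle\pi_{t-1}|_G,\1\rangle_G\ge 1$ this forces equality, so $G$ has a single orbit on $(t-1)$-spaces.

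The only genuine content lies in the second step; everything else is bookkeeping with permutation characters, and the hypothesis $t\le n/2$ enters exactly to guarantee that $V_t$ is a constituent of $M_t$ but not of $M_{t-1}$. I expect that the most natural route within the framework of this paper is not to quote the inclusion-matrix theorem as a black box, but to derive the submodule relation $M_{t-1}\hookrightarrow M_t$ — and, more generally, the analogous containment for the flag-like structures considered here — from the character theory of $\GL(n,q)$ developed earlier, so that Perin's theorem emerges as a special case of a broader transitivity result; the main work then consists in identifying the relevant irreducible constituents and their containment pattern.
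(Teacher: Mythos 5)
Your argument is correct, and it is essentially the classical Livingstone--Wagner/McLaughlin-style proof transported to $\GL(n,q)$: orbit counting via $\langle\pi_j|_G,\1\rangle_G$, plus the fact that for $t\le n/2$ the permutation character on $t$-spaces is multiplicity-free with constituents the unipotent characters $\chi^{X-1\mapsto(n-j,j)}$ for $0\le j\le t$, so that $\pi_t-\pi_{t-1}$ is a genuine character. The paper itself gives no proof of this statement---it is quoted from Perin---but its machinery establishes a strictly stronger result by a different route: Proposition~\ref{pro:decomposition_perm_char} determines the irreducible constituents of the permutation character of general $(\rho,\Ic)$-flags (for $t$-spaces this is exactly your multiplicity-free two-row decomposition, obtained there from Young's rule, Lemma~\ref{lem:Youngs_rule}, since $K_{(n-j,j),(n-t,t)}=1$), and Theorem~\ref{thm:designs_dual_dist} together with Corollary~\ref{cor:LVP-theorem} then characterises transitivity of an arbitrary subset $Y$ of $\GL(n,q)$ by orthogonality of $\1_Y$ to the eigenspaces $V_{\ulambda}$ of the conjugacy class association scheme. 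The difference matters precisely at your final step: the inequality $\langle(\pi_t-\pi_{t-1})|_G,\1\rangle_G\ge 0$ uses that $G$ is a subgroup (so that the restriction is again a character and the inner product counts a multiplicity), hence your proof does not extend to transitive subsets, whereas the paper's orthogonality formulation does and simultaneously covers all flag-like structures via the order $\preceq$ on types. Conversely, your route is more elementary and self-contained for the subgroup case, needing only the decomposition of the Grassmann permutation module (or the $q$-analog of the inclusion-matrix rank theorem) rather than the Green/Macdonald character theory and the association-scheme formalism; both proofs hinge on the same containment of constituent sets, which is where the hypothesis $t\le n/2$ enters.
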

\par
An (integer) \emph{composition} of a nonnegative number $n$ is much like a partition of $n$, except that the sequence entries are not necessarily nonincreasing. For a composition $\lambda=(\lambda_1,\lambda_2,\dots)$ of $n$, a \emph{$\lambda$-flag} is a sequence of subspaces $(V_1,V_2,\dots)$ of $\FF_q^n$ such that
\[
\{0\}=V_0\le V_1\le V_2\le\cdots
\]
and $\dim(V_i/V_{i-1})=\lambda_i$ for each $i\ge 1$. The following is an example of the results we obtain.
\begin{theorem}
\label{thm:main}
Let $Y$ be a subset of $\GL(n,q)$ that is transitive on $\sigma$-flags. Then $Y$ is also transitive on $\tau$-flags for all compositions $\tau$ satisfying $\tilde\sigma\unlhd\tilde\tau$, where $\tilde\sigma$ and $\tilde\tau$ are the respective partitions obtained from $\sigma$ and $\tau$ by rearranging the parts.
\end{theorem}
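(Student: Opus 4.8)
The plan is to translate transitivity into a statement about irreducible constituents of permutation characters and then to reduce the theorem to the $q$-analogue of Young's rule for $\GL(n,q)$ together with transitivity of the dominance order. The starting point is the design-theoretic characterisation of transitivity: if $G$ acts transitively on a finite set $\Omega$ with permutation character $\pi_\Omega$, then $Y\subseteq G$ is transitive on $\Omega$ if and only if $\sum_{g\in Y}\rho(g)=0$ for every irreducible representation $\rho\not\cong\1$ of $G$ occurring in $\pi_\Omega$; equivalently, $\1_Y\in\CC[G]$ is orthogonal to the isotypic component of the regular representation afforded by each such $\rho$. This is immediate from writing the matrix $M_Y$ with $(b,a)$-entry $\abs{\{g\in Y:ga=b\}}$ as the image of $\1_Y$ under the algebra map $\CC[G]\to\operatorname{End}\CC[\Omega]$, $g\mapsto P(g)$, and noting that $Y$ is transitive precisely when $M_Y=(\abs Y/\abs\Omega)J$, i.e.\ when $M_Y$ acts as $\abs Y$ on the (one-dimensional) space of $G$-invariants of $\CC[\Omega]$ and as $0$ on every other isotypic component. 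Granting this, it suffices to prove that the set of nontrivial irreducible constituents of $\pi_\tau$ is contained in that of $\pi_\sigma$ whenever $\tilde\sigma\unlhd\tilde\tau$, where $\pi_\lambda$ denotes the permutation character of $\GL(n,q)$ on $\lambda$-flags: transitivity of $Y$ on $\sigma$-flags makes $\1_Y$ orthogonal to all isotypic components attached to nontrivial constituents of $\pi_\sigma$, hence a fortiori to those attached to nontrivial constituents of $\pi_\tau$, giving transitivity of $Y$ on $\tau$-flags.

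To identify these constituents, observe that the stabiliser of a fixed $\lambda$-flag is the parabolic subgroup $P_\lambda$ of $\GL(n,q)$, so $\pi_\lambda=\Ind_{P_\lambda}^G\1$; since $P_\lambda$ contains a Borel subgroup $B$, the character $\pi_\lambda$ is a constituent of $\Ind_B^G\1$ and hence all its constituents are unipotent. The $q$-analogue of Young's rule --- obtained for instance from the action of the Iwahori--Hecke algebra $\mathcal H_q(S_n)$ on $\Ind_B^G\1$, or from Green's parametrisation of the unipotent characters --- gives
\[
\pi_\lambda=\sum_{\mu\vdash n}K_{\mu,\tilde\lambda}\,U_\mu ,
\]
where $U_\mu$ is the unipotent character of $\GL(n,q)$ labelled by the partition $\mu$ and $K_{\mu,\tilde\lambda}$ is a Kostka number. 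Since $K_{\mu,\nu}\neq 0$ exactly when $\nu\unlhd\mu$, the set of constituents of $\pi_\lambda$ is $\{U_\mu:\tilde\lambda\unlhd\mu\}$; it depends only on the partition $\tilde\lambda$, and its unique trivial member is $U_{(n)}=\1$. Transitivity of the dominance order now gives $\{\mu:\tilde\tau\unlhd\mu\}\subseteq\{\mu:\tilde\sigma\unlhd\mu\}$ as soon as $\tilde\sigma\unlhd\tilde\tau$, which is exactly the containment needed above.

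I expect the substantive step to be the second one: correctly pinning down the irreducible constituents of the flag permutation module and recognising that this set is governed by dominance of the \emph{rearranged} partition $\tilde\lambda$. Two points deserve care. First, merging two adjacent parts of $\lambda$ is soft --- forgetting an intermediate subspace is a $G$-equivariant surjection between flag varieties with constant fibre size, and transitivity always pushes forward along such maps --- so on its own this would only give the weaker statement that $Y$ is transitive on every coarsening of $\sigma$; the real force of the dominance hypothesis is the ability to \emph{reorder} the parts, which on the character side is the symmetry of $K_{\mu,\tilde\lambda}$ in the parts of $\lambda$ (and may also be seen from the self-duality of $\pi_\lambda$ under the transpose--inverse automorphism of $\GL(n,q)$, which interchanges $\lambda$-flags with $\lambda^{\mathrm{rev}}$-flags and fixes every unipotent character). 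Second, one must feed the characterisation of the first paragraph the correct constituent set, so it is convenient to have the explicit decomposition $\pi_\lambda=\sum_\mu K_{\mu,\tilde\lambda}U_\mu$ --- presumably established in the paper before this point --- available rather than merely its support. Modulo these inputs the theorem follows formally, and specialising to $\sigma=(t,n-t)$, $\tau=(t-1,n-t+1)$ with $t\le n/2$ recovers Perin's theorem (now for subsets rather than subgroups).
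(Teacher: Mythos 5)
Your proposal is correct and follows essentially the same route as the paper: transitivity is recast as orthogonality of $\1_Y$ to the isotypic components attached to the nontrivial constituents of the flag permutation character (in the paper's language, vanishing of the dual distribution on the eigenspaces $V_{\ulambda}$), those constituents are identified via the $q$-analogue of Young's rule as the unipotent characters $\chi^{X-1\mapsto\mu}$ with $\mu\unrhd\tilde\lambda$, and the conclusion follows from transitivity of the dominance order. The paper simply carries this out in the greater generality of $(\sigma,\Ic)$-flags (invoking the Littlewood--Richardson rule as well), of which Theorem~\ref{thm:main} is the special case $\Ic=\varnothing$ that you treat.
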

\par
In fact we consider subsets of $\GL(n,q)$ that are transitive on more general objects, namely on pairs $(F,B)$, where $F$ is a $\sigma$-flag and $B$ is a tuple whose entries are ordered bases of some of the nonzero quotient spaces $V_i/V_{i-1}$. Such objects are called \emph{$(\sigma,\Ic)$-flags}, where $\Ic$ indexes the quotient spaces whose bases occur in $B$. We study $(\sigma,\Ic)$-flags using the character theory of $\GL(n,q)$ and the theory of association schemes (see~\cite{BanIto1984}, for example).
\par
We organise this paper in the following way. In Section~\ref{sec:GL} we recall some relevant background on the character theory of $\GL(n,q)$ and the conjugacy class association scheme of $\GL(n,q)$. In Section~\ref{sec:Designs} we give structural results for transitive subsets of $\GL(n,q)$. In particular we show that such subsets can be characterised as designs in the corresponding association scheme, leading directly to results like Theorem~\ref{thm:main}. In Section~\ref{sec:subgroups} we survey the examples coming from subgroups. In Section~\ref{sec:Cliques} we study so-called \emph{cliques} in $\GL(n,q)$, which are subsets of $\GL(n,q)$ such that, for all distinct elements $x,y$ in the subset, $x^{-1}y$ fixes no $(\sigma,\Ic)$-flag. Among other things, this will allow us to establish the nonexistence of sharply transitive subsets of $\GL(n,q)$ in certain cases. In Section~\ref{sec:Existence} we show the existence of small subsets of $\GL(n,q)$ that are transitive on $(\sigma,\Ic)$-flags, as long as the largest part of $\sigma$ is large compared to the sum of all other parts of $\sigma$. We find this interesting since subgroups of $\GL(n,q)$ that are transitive on $(\sigma,\Ic)$-flags are extremely rare. In Section~\ref{sec:designs_codes_polynomials} we discuss connections between transitive subsets of $\GL(n,q)$ and cliques in $\GL(n,q)$ on one hand and certain orthogonal polynomials, namely the Al-Salam-Carlitz polynomials, on the other hand.


\section{The finite general linear groups}
\label{sec:GL}

We shall give a brief account of the conjugacy classes and the (complex) irreducible characters of the general linear group $\GL(n,q)$. We mostly follow~\cite[Ch.~IV]{Mac1979}.

\subsection{Partitions}

An (integer) \emph{partition} is a sequence $\lambda=(\lambda_1,\lambda_2,\dots)$ of nonnegative integers that sum up to a finite number and satisfy $\lambda_1\ge\lambda_2\ge\cdots$. The \emph{size} of $(\lambda_1,\lambda_2,\dots)$ is defined to be $\abs{\lambda}=\lambda_1+\lambda_2+\cdots$ and its \emph{length} $\ell(\lambda)$ is the largest $i$ such that $\lambda_i>0$. We often write $(\lambda_1,\lambda_2,\dots,\lambda_{\ell(\lambda)})$ instead of $(\lambda_1,\lambda_2,\dots)$. If $\abs{\lambda}=n$, then we also say that $\lambda$ is a partition of $n$. Let $\Par$ be the set of integer partitions. We denote the unique partition of $0$ by $\varnothing$.
\par
The \emph{Young diagram} of a partition $(\lambda_1,\lambda_2,\dots,\lambda_k)$ of $n$ is an array of~$n$ boxes with left-justified rows and top-justified columns, where row $i$ contains $\lambda_i$ boxes. To each partition $\lambda$ belongs a \emph{conjugate partition} $\lambda'$ whose parts are the number of boxes in the columns of the Young diagram of $\lambda$.
\par
We recall three partial orders on integer partitions, namely containment, refinement, and dominance order. Let $\lambda,\mu\in\Par$ be two partitions. We say that $\lambda$ \emph{contains}~$\mu$ and write $\mu\subseteq\lambda$ if the Young diagram of $\mu$ is contained in the Young diagram of~$\lambda$. We say that $\mu$ \emph{refines} $\lambda$ if $\abs{\mu}\le\abs{\lambda}$ and the parts of $\lambda$ can be partitioned to produce the parts of $(\mu_1,\dots,\mu_{\ell(\mu)},1^{\abs{\lambda}-\abs{\mu}})$. For example $(321)$ refines $(7422)$. We say that $\lambda$ \emph{dominates} $\mu$ and write $\mu\unlhd\lambda$ if~\eqref{eqn:dominance_order} holds. As usual we write $\mu\lhd\lambda$ if $\mu\unlhd\lambda$ and $\mu\ne\lambda$. Typically these partial orders are only defined for partitions of the same size, but it is natural to extend these to the set of all partitions. 

\subsection{Conjugacy classes}
\label{sec:conjugacy_classes}

We shall now describe the conjugacy classes of $\GL(n,q)$ (see~\cite[Ch.~IV,\S~3]{Mac1979}, for example). Let $\Phi$ be the set of monic irreducible polynomials in $\FF_q[X]$ distinct from~$X$. We shall often write $1$ instead of $X-1$ when the meaning is clear from the context. We also write $\abs{f}$ for the degree of $f\in\Phi$.
\par
Let $\Lambda$ be the set of mappings $\ulambda\colon\Phi\to\Par$ of finite support (with $\varnothing$ being the zero element in $\Par$). We often use the short-hand notation $f\mapsto\lambda$ for the element $\ulambda\in\Lambda$ that is supported only on $f$ and satisfies $\ulambda(f)=\lambda$. We define the \emph{size} of an element $\ulambda\in\Lambda$ to be
\[
\norm{\ulambda}=\sum_{f\in\Phi}\abs{\ulambda(f)}\cdot\abs{f}
\]
and put $\Lambda_n=\{\ulambda\in\Lambda:\norm{\ulambda}=n\}$.
\par
The \emph{companion matrix} of $f\in\Phi$ with $f=X^d+f_{d-1}X^{d-1}+\cdots+f_1X+f_0$ is
\[
C(f)=\begin{bmatrix}
   &    &  &    & -f_0\\
 1 &    &  &    & -f_1\\
   & 1  &  &    & -f_2\\
   &    & \ddots & & \vdots\\
   &    &  & 1 & -f_{d-1}\\
\end{bmatrix}\in\FF_q^{d\times d}.
\]
(where blanks are filled with zeros). For $f\in\Phi$ of degree $d$ and a positive integer $k$, we write
\[
C(f,k)=\begin{bmatrix}
C(f) & I &   & & \\
  & C(f) & I & & \\
&&\ddots&\ddots\\
  &   &   & \ddots &  I\\
  &   &   & &  C(f) 
\end{bmatrix}\in\FF_q^{kd\times kd},
\]
and for $f\in\Phi$ and $\mu\in\Par$, we define $C(f,\mu)$ to be the block diagonal matrix of order $\abs{\mu}\cdot\abs{f}$ with blocks $C(f,\mu_1),C(f,\mu_2),\dots$. Finally, with every $\umu\in\Lambda_n$ we associate the block diagonal matrix $R_{\umu}$ of order $n$ whose blocks are $C(f,\umu(f))$, where~$f$ ranges through the support of $\umu$. Then every element $g$ of $\GL(n,q)$ is conjugate to exactly one matrix $R_{\umu}$ for $\umu\in\Lambda_n$, which is called the \emph{Jordan canonical form} of $g$. Hence~$\Lambda_n$ indexes the conjugacy classes of $\GL(n,q)$; we denote by $C_{\umu}$ the conjugacy class containing $R_{\umu}$. Note that $C_{X-1\mapsto (1^n)}$ is the conjugacy class containing the identity.

\subsection{Parabolic induction}

A \emph{composition} is much like a partition, except that the parts do not need to be nonincreasing. Let $\lambda=(\lambda_1,\lambda_2,\dots,\lambda_k)$ be a composition of $n$. Let $P_\lambda$ be the parabolic subgroup of $\GL(n,q)$ consisting of block upper-triangular matrices with block sizes $\lambda_1,\lambda_2,\dots,\lambda_k$, namely
\[
P_\lambda=\left\{\begin{bmatrix}
A_1 & *     & \cdots & *\\
       & A_2 & \cdots & *\\
       &        & \ddots & \vdots\\
       &        &            & A_k\\
\end{bmatrix}
:A_i\in\GL(\lambda_i,q)
\right\}.
\]
Let $\pi_i:P_\lambda\to\GL(\lambda_i,q)$ be the mapping that projects to the $i$-th block on the diagonal, so that
\begin{equation}
\pi_i:
\begin{bmatrix}
A_1 & *     & \cdots & *\\
       & A_2 & \cdots & *\\
       &        & \ddots & \vdots\\
       &        &            & A_k\\
\end{bmatrix}
\mapsto A_i.   \label{eqn:projections}
\end{equation}
Let $\phi_i$ be a class function of $\GL(\lambda_i,q)$. Then 
\[
\prod_{i=1}^k(\phi_i\circ \pi_i)
\]
is a class function of $P_\lambda$. We define the product $\phi_1\cdot\phi_2\cdots\phi_k$ to be the induction of this class function to $\GL(n,q)$, that is
\[
\phi_1\cdot\phi_2\cdots\phi_k=\Ind_{P_\lambda}^{\GL(n,q)}\left(\prod_{i=1}^k(\phi_i\circ \pi_i)\right).
\]

\subsection{The irreducible characters}

The complete set of (complex) irreducible characters has been obtained by Green~\cite{Gre1956}. Good treatments of this topic are also contained in~\cite[Ch.~IV]{Mac1979} and~\cite{Jam1986}. The irreducible characters of $\GL(n,q)$ are naturally indexed by $\Lambda_n$ and, for $\ulambda\in\Lambda_n$, we denote by $\chi^{\ulambda}$ the corresponding irreducible character. For $f\in\Phi$ and a partition $\lambda$, the characters $\chi^{f\mapsto\lambda}$ are typically called the \emph{primary} irreducible characters of $\GL(n,q)$. It~is well known (see~\cite[\S~8]{Jam1986}, for example) that the irreducible characters of $\GL(n,q)$ satisfy
\[
\chi^{\ulambda}=\prod_{f\in\Phi}\chi^{f\mapsto\ulambda(f)}.
\]
We use the indexing of~\cite{Jam1986}, so that $\chi^{f\mapsto(n)}$ is a cuspidal character of $\GL(n,q)$ and in particular $\chi^{X-1\mapsto(n)}$ is the trivial character. In contrast, $\lambda$ is replaced by the conjugate partition $\lambda'$ in~\cite[Ch.~IV]{Mac1979}. We often denote the degree of $\chi^{\ulambda}$ by $f_{\ulambda}$.
\par
It follows from~\cite[Ch.~IV]{Mac1979} that, for each $f\in\Phi$, the algebra with multiplication~$\cdot$~generated by $\{\chi^{f\mapsto \lambda}:\lambda\in\Par\}$ is isomorphic to the algebra of symmetric functions with~$\chi^{f\mapsto \lambda}$ being sent to the Schur function~$s_\lambda$. In particular the decomposition of the product $\chi^{f\mapsto \lambda}\cdot \chi^{f\mapsto \nu}$ into irreducible characters is given by the Littlewood-Richardson rule~\cite[Ch.~I]{Mac1979}. Let $\lambda,\mu,\nu\in\Par$ and let $c_{\lambda\nu}^\mu$ be the number of semistandard skew-tableaux~$T$ of shape $\mu/\lambda$ and content~$\nu$ such that the sequence, obtained by concatenating its reversed rows, is a lattice permutation. Note that we have $c_{\lambda\nu}^\mu=0$ unless $\abs{\mu}=\abs{\lambda}+\abs{\nu}$ and $\lambda,\nu\subseteq \mu$. The Littlewood-Richardson rule then states that
\[
\chi^{f\mapsto \lambda}\cdot \chi^{f\mapsto \nu}=\sum_{\mu\in\Par}c_{\lambda\nu}^\mu\,\chi^{f\mapsto \mu}
\]
for all $f\in\Phi$. Now, for $\ulambda,\umu,\unu\in\Lambda$, define
\[
c_{\ulambda\unu}^{\umu}=\prod_{f\in\Phi}c_{\ulambda(f)\unu(f)}^{\umu(f)}
\]
and note that $c_{\ulambda\unu}^{\umu}=0$ unless $\norm{\umu}=\norm{\ulambda}+\norm{\unu}$ and $\ulambda,\unu\subseteq \umu$, where $\ulambda\subseteq \umu$ means $\ulambda(f)\subseteq\umu(f)$ for all $f\in\Phi$. The following lemma is then immediate.
\begin{lemma}
\label{lem:LLR_general}
For all $\ulambda,\unu\in\Lambda$ we have
\[
\chi^{\ulambda}\cdot\chi^{\unu}=\sum_{\umu\in\Lambda}c_{\ulambda\unu}^{\umu}\,\chi^{\umu}.
\]
\end{lemma}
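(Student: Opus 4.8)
The plan is to reduce the statement to the Littlewood-Richardson rule for the primary characters, exploiting the factorization $\chi^{\ulambda}=\prod_{f\in\Phi}\chi^{f\mapsto\ulambda(f)}$ together with the fact that parabolic induction makes $\bigoplus_{n\ge 0}R(\GL(n,q))$ into a commutative and associative graded ring (see~\cite[Ch.~IV]{Mac1979}), with the trivial character $\chi^{f\mapsto\varnothing}$ of $\GL(0,q)$ serving as multiplicative identity.

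First I would observe that, since $\ulambda$ and $\unu$ have finite support, only finitely many of the factors $\chi^{f\mapsto\ulambda(f)}$ and $\chi^{f\mapsto\unu(f)}$ differ from the identity, so all products below are finite. Using commutativity and associativity of $\cdot$ to interleave and regroup the two products, I obtain
\[
\chi^{\ulambda}\cdot\chi^{\unu}=\Big(\prod_{f\in\Phi}\chi^{f\mapsto\ulambda(f)}\Big)\cdot\Big(\prod_{f\in\Phi}\chi^{f\mapsto\unu(f)}\Big)=\prod_{f\in\Phi}\big(\chi^{f\mapsto\ulambda(f)}\cdot\chi^{f\mapsto\unu(f)}\big).
\]

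Next I would apply the Littlewood-Richardson rule recalled above to each factor,
\[
\chi^{f\mapsto\ulambda(f)}\cdot\chi^{f\mapsto\unu(f)}=\sum_{\mu\in\Par}c_{\ulambda(f)\unu(f)}^{\mu}\,\chi^{f\mapsto\mu},
\]
and then expand the (finite) product over $f$. A term in this expansion corresponds to a choice, for each $f$ in the joint support, of a partition $\mu_f$; such a choice is precisely an element $\umu\in\Lambda$ with $\umu(f)=\mu_f$, and the coefficient attached to it is $\prod_{f}c_{\ulambda(f)\unu(f)}^{\umu(f)}$, which by definition equals $c_{\ulambda\unu}^{\umu}$. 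Collecting terms and using $\prod_{f\in\Phi}\chi^{f\mapsto\umu(f)}=\chi^{\umu}$ gives the asserted identity. Since $c_{\ulambda(f)\unu(f)}^{\mu}=0$ unless $\abs{\mu}=\abs{\ulambda(f)}+\abs{\unu(f)}$, only finitely many $\umu$ contribute, so the sum over $\Lambda$ is well-defined.

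The only point requiring care is the regrouping step: one must invoke that $\cdot$ is commutative and associative, so that the two finite products over $\Phi$ can be merged factor by factor, and that $\chi^{f\mapsto\varnothing}$ acts as a unit. Both are standard properties of parabolic induction for $\GL$, so I do not expect any substantive obstacle beyond bookkeeping with the finite supports.
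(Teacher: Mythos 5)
Your argument is correct and is exactly the route the paper intends: it factors $\chi^{\ulambda}$ and $\chi^{\unu}$ into primary characters, applies the Littlewood--Richardson rule to each $f$-factor, and collects terms, which is why the paper states the lemma as ``immediate'' after recalling these facts. The bookkeeping with finite supports and the unit $\chi^{f\mapsto\varnothing}$ is handled appropriately, so nothing is missing.
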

\par
We shall also need the following straightforward result.
\begin{remark}
\label{rem:LLR}
Let $\ulambda,\umu\in\Lambda$ such that $\ulambda(f)\subseteq \umu(f)$ for all $f\in\Phi$. Then there exists $\unu\in\Lambda$ such that $c_{\ulambda\unu}^{\umu}>0$.
\end{remark}
\par
The primary irreducible characters must also obey Young's rule~\cite[Ch.~I]{Mac1979}. For partitions $\lambda$ and $\mu$ of the same size, the \emph{Kostka number} $K_{\lambda\mu}$ is the number of semistandard Young tableaux of shape $\lambda$ and content $\mu$. 
\begin{lemma}
\label{lem:Youngs_rule}
For each $f\in\Phi$ and each partition $\mu=(\mu_1,\mu_2,\dots)$, we have
\[
\prod_{i\ge 1}\chi^{f\mapsto (\mu_i)}=\sum_{\lambda\unrhd\mu} K_{\lambda\mu}\,\chi^{f\mapsto \lambda},
\]
where $\lambda$ ranges over the partitions of $\abs{\mu}$.
\end{lemma}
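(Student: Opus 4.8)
The plan is to transport the statement through the algebra isomorphism recalled above. Fix $f\in\Phi$. By the discussion preceding Lemma~\ref{lem:LLR_general}, the $\cdot$-algebra generated by $\{\chi^{f\mapsto\lambda}:\lambda\in\Par\}$ is isomorphic to the ring of symmetric functions via $\chi^{f\mapsto\lambda}\mapsto s_\lambda$. Under this map the one-row character $\chi^{f\mapsto(\mu_i)}$ corresponds to the Schur function $s_{(\mu_i)}$, which coincides with the complete homogeneous symmetric function $h_{\mu_i}$. Hence $\prod_{i\ge 1}\chi^{f\mapsto(\mu_i)}$ corresponds to $h_\mu:=\prod_{i\ge 1}h_{\mu_i}$.

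Next I would invoke the classical expansion of $h_\mu$ in the Schur basis, $h_\mu=\sum_\lambda K_{\lambda\mu}\,s_\lambda$, where $\lambda$ runs over the partitions of $\abs{\mu}$ and $K_{\lambda\mu}$ is the Kostka number (see~\cite[Ch.~I]{Mac1979}). Together with the standard fact that $K_{\lambda\mu}=0$ unless $\lambda\unrhd\mu$, this rewrites the sum with $\lambda$ ranging only over partitions dominating $\mu$. Applying the inverse isomorphism then yields the claimed identity for the characters $\chi^{f\mapsto\lambda}$. Alternatively, if one prefers to avoid symmetric functions beyond what is already stated, one can argue by induction on $\ell(\mu)$, peeling off one factor $\chi^{f\mapsto(\mu_j)}$ at a time: multiplication by $\chi^{f\mapsto(k)}$ is the Pieri rule, i.e.\ the special case $\unu=(k)$ of the Littlewood--Richardson rule underlying Lemma~\ref{lem:LLR_general}, which adds a horizontal $k$-strip; iterating this records exactly the row-by-row construction of a semistandard Young tableau of content $\mu$, so the multiplicity of $\chi^{f\mapsto\lambda}$ is $K_{\lambda\mu}$, and again $K_{\lambda\mu}=0$ for $\lambda\not\unrhd\mu$ pins down the range of summation.

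There is essentially no serious obstacle: the content of the lemma is the known symmetric-function identity $h_\mu=\sum_{\lambda\unrhd\mu}K_{\lambda\mu}s_\lambda$, and the only thing to verify is that the identifications $s_{(k)}=h_k$ and the Schur expansion of $h_\mu$ are applied correctly under the given isomorphism. Should one insist on a self-contained derivation, the mild care needed lies in the inductive step of the Pieri-rule argument, namely in matching the column-strict filling condition defining semistandard tableaux with the successive additions of horizontal strips.
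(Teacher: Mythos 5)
Your argument is correct and is precisely the justification the paper intends: the paper states this lemma without proof, relying on the previously recalled isomorphism $\chi^{f\mapsto\lambda}\mapsto s_\lambda$ and citing Young's rule from Macdonald, which is exactly the identity $h_\mu=\sum_{\lambda\unrhd\mu}K_{\lambda\mu}s_\lambda$ you transport back through the isomorphism. Your alternative Pieri-rule induction is also a valid (slightly more self-contained) route, but it amounts to the same standard symmetric-function fact.
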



\subsection{The conjugacy class association scheme}
\label{sec:association_schemes}

We shall study the combinatorics in $\GL(n,q)$ from the viewpoint of an association scheme. We refer to~\cite{BanIto1984} for background on association schemes. Every finite group gives rise to an association scheme (see~\cite[Section 2.7]{BanIto1984} for details), called the \emph{conjugacy class association scheme} of the group, but the theory of association schemes is much more general than that. We shall recall relevant background about the conjugacy class association scheme of~$\GL(n,q)$. 
\par
Henceforth we use the following notation. For a field $K$ and finite sets $X$ and~$Y$, we denote by $K(X,Y)$ the set of $\abs{X}\times\abs{Y}$ matrices $A$ with entries in $K$, where rows and columns are indexed by $X$ and $Y$, respectively. For $x\in X$ and $y\in Y$, the $(x,y)$-entry of $A$ is written as $A(x,y)$. If $\abs{Y}=1$, then we omit $Y$, so $K(X)$ is the set of column vectors $a$ indexed by $X$ and, for $x\in X$, the $x$-entry of $a$ is written as $a(x)$.
\par
For $\umu\in\Lambda_n$, let $D_{\umu}\in\CC(\GL(n,q),\GL(n,q))$ be given by
\begin{equation}
D_{\umu}(x,y)=\begin{cases}
1 & \text{for $x^{-1}y\in C_{\umu}$}\\
0 & \text{otherwise}.
\end{cases}   \label{eqn:D_matrices}
\end{equation}
Let $\AA$ be the vector space generated by $\{D_{\umu}:\umu\in\Lambda_n\}$ over the complex numbers. Then $\AA$ is a commutative matrix algebra, which contains the identity and is closed under conjugate transposition. The collection of zero-one matrices~$D_{\umu}$ therefore defines an association scheme, called the \emph{conjugacy class association scheme} of $\GL(n,q)$. The algebra $\AA$ is called the \emph{Bose-Mesner algebra} of this association scheme.
\par
Since $\AA$ is commutative, it can be simultaneously diagonalised and therefore there exists a basis $\{E_{\ulambda}:\ulambda\in\Lambda_n\}$ of $\AA$ consisting of Hermitian matrices with the minimal idempotent property
\begin{equation}
E_{\ukappa}E_{\ulambda}=\delta_{\ukappa\ulambda}E_{\ulambda}.   \label{eqn:E_orthogonal}
\end{equation}
These matrices are given by~\cite[Theorem II.7.2]{BanIto1984}
\[
E_{\ulambda}\,=\frac{f_{\ulambda}}{\abs{\GL(n,q)}}\sum_{\umu\in\Lambda_n}\chi^{\ulambda}_{\umu}\,D_{\umu},
\]
where $\chi^{\ulambda}_{\umu}$ is the irreducible character $\chi^{\ulambda}$ evaluated on the conjugacy class $C_{\umu}$ and $f_{\ulambda}$ is the degree of this character. By~\eqref{eqn:D_matrices} the entries of $E_{\ulambda}$ are given by
\begin{equation}
E_{\ulambda}(x,y)=\frac{f_{\ulambda}}{\abs{ \GL(n,q)}}\chi^{\ulambda}(x^{-1}y).   \label{eqn:E_matrix_chi}
\end{equation}
Note that $E_{X-1\mapsto (n)}$ is the all-ones matrix and that $\sum_{\ulambda\in\Lambda_n}E_{\ulambda}$ is the identity matrix, which can be seen using standard properties of characters. Since the matrices $E_{\ulambda}$ are idempotent, their eigenvalues are $0$ or $1$, and so their ranks sum up to $\abs{\GL(n,q)}$. Let $V_{\ulambda}$ be the column space of $E_{\ulambda}$. It follows from~\eqref{eqn:E_orthogonal} that these vector spaces are pairwise orthogonal and that
\begin{equation}
\CC(\GL(n,q))=\bigoplus_{\ulambda\in\Lambda_n}V_{\ulambda}.   \label{eqn:CC_Gn_decomp}
\end{equation}
\par
Now let $Y$ be a subset of $\GL(n,q)$. The \emph{inner distribution} of $Y$ is the tuple $(a_{\umu})_{\umu\in\Lambda_n}$, where
\begin{equation}
a_{\umu}=\frac{1}{\abs{Y}}\,\sum_{x,y\in Y}D_{\umu}(x,y),   \label{eqn:inner_dist}
\end{equation}
and the \emph{dual distribution} of $Y$ is the tuple $(a'_{\ulambda})_{\ulambda\in\Lambda_n}$, where
\begin{equation}
\label{eqn:dual_dist}
a'_{\ulambda}=\frac{\abs{\GL(n,q)}}{\abs{Y}}\,\sum_{x,y\in Y}E_{\ulambda}(x,y).
\end{equation}
Explicitly using~\eqref{eqn:E_matrix_chi} we have
\begin{equation}
a'_{\ulambda}=\frac{f_{\ulambda}}{\abs{Y}}\sum_{x,y\in Y}\chi^{\ulambda}(x^{-1}y) . \label{eqn:dual_dist_explicit}
\end{equation}
The entries in the inner distribution are clearly nonnegative numbers. The same holds for the entries in the dual distribution. To see this, let $\1_Y\in\CC(\GL(n,q))$ be the characteristic vector of $Y$, so that $\1_Y(x)=1$ if $x\in Y$ and $\1_Y(x)=0$ otherwise. Since~$E_{\ulambda}$ is Hermitian and idempotent, we find from~\eqref{eqn:dual_dist} that
\[
\frac{\abs{Y}}{\abs{\GL(n,q)}}\,a'_{\ulambda}=\1^\top_YE_{\ulambda}\1_Y=\1^*_YE^*_{\ulambda}E_{\ulambda}\1_Y=\norm{E_{\ulambda}\1_Y}^2.
\]
This shows that the entries in the dual distribution are real and nonnegative. Moreover the extreme case $a'_{\ulambda}=0$ occurs if and only if $\1_Y$ is orthogonal to $V_{\ulambda}$.
\par
The power of association schemes in combinatorics stems from the observation that interesting combinatorial structures can often be characterised as subsets of association schemes for which certain entries in the inner or dual distribution are equal to zero. Delsarte~\cite{Del1973} calls these objects \emph{cliques} and \emph{designs}, respectively. We shall see that interesting subsets of $\GL(n,q)$ indeed are cliques or designs in the conjugacy class association scheme of $\GL(n,q)$.


\section{Notions of transitivity}
\label{sec:Designs}

We consider pairs $(\rho,\Ic)$, where $\rho$ is a composition of $n$ and~$\Ic$ is a subset of $\{1,2,\dots,\ell(\rho)\}$ and, if $q=2$, we insist that $\rho_i>1$ for each $i\not\in\Ic$. We denote the collection of such pairs by $\Sigma_{n,q}$. 
\par
For a composition $\rho$ of $n$, a \emph{$\rho$-flag} is a tuple of subspaces $(V_1,V_2,\dots,V_{\ell(
\rho)})$ of $\FF_q^n$ such that
\[
\{0\}=V_0\le V_1\le V_2\le\cdots\le V_{\ell(\rho)}=\FF_q^n
\]
and $\dim(V_i/V_{i-1})=\rho_i$ for each $i\in\{1,2,\dots,\ell(\rho)\}$. Let $\alpha=(\rho,\Ic)$ be an element of $\Sigma_{n,q}$ with $\Ic=\{i_1,i_2,\dots,i_k\}$. We define an \emph{$\alpha$-flag} to be a pair $(F,B)$, where $F=(V_1,V_2,\dots,V_{\ell(\rho)})$ is a $\rho$-flag and $B=(B_1,B_2,\dots,B_k)$ is a tuple of ordered bases of $V_{i_1}/V_{{i_1}-1},V_{i_2}/V_{{i_2}-1},\dots,V_{i_k}/V_{{i_k}-1}$ with $V_0=\{0\}$. For example, a $((t,n-t),\varnothing)$-flag is essentially a $t$-dimensional subspace of $\FF_q^n$ and a $((t,n-t),\{1\})$-flag is essentially a $t$-tuple of linearly independent elements of $\FF_q^n$.
\par
Let $\Omega$ be a set on which $\GL(n,q)$ acts. We say that a subset $Y$ of $\GL(n,q)$ is \emph{transitive} on $\Omega$ if there is a constant $r$ such that the following holds. For all $a,b\in \Omega$, there are exactly $r$ elements $g\in Y$ such that $ga=b$. If $r=1$, then we also call $Y$ \emph{sharply} transitive on $\Omega$. If $Y$ is a subgroup of $\GL(n,q)$, then this notion coincides with that of a transitive group action of $Y$ on~$\Omega$.
\par
We are interested in subsets of $\GL(n,q)$ that are transitive on $\alpha$-flags for $\alpha\in\Sigma_{n,q}$. The following example gives a simple construction of subsets of $\GL(n,q)$ that are sharply transitive on $((1,n-1),\Ic)$-flags for $\Ic=\{1\}$ and $\Ic=\varnothing$.
\begin{example}
\label{ex:Singer}
Let $C\in\GL(n,q)$ be the companion matrix of an irreducible polynomial in $\FF_q[X]$ of degree $n$. Then $\FF_q[C]$ is a representation of $\FF_{q^n}$ over $\FF_q$ and so the multiplicative group $\FF_q[C]^*$ of $\FF_q[C]$ is sharply transitive on $\FF_q^n\setminus\{0\}$. Hence $\FF_q[C]^*$ is sharply transitive on $((1,n-1),\{1\})$-flags. Of course $\FF_q[C]^*$ is a cyclic of subgroup of $\GL(n,q)$, known as a \emph{Singer cycle}. Moreover $\FF_q[C]^*$ contains a cyclic subgroup of order $(q^n-1)/(q-1)$ that is sharply transitive on the one-dimensional subspaces of~$\FF_q^n$. Hence this subgroup is sharply transitive on $((1,n-1),\varnothing)$-flags.
\par
More generally, a subset of $\GL(n,q)$ that is sharply transitive on $\FF_q^n\setminus\{0\}$ is equivalent to each of the following objects: a spread set in $\FF_q^{n\times n}$, a finite quasifield of order~$q^n$, and a finite translation plane of order $q^n$ (see~\cite[\S~5.1]{Dem1968}, for example).
\end{example}
\par
In what follows we give a structural interpretation of subsets of $\GL(n,q)$ that are transitive on $\alpha$-flags (for $\alpha\in\Sigma_{n,q}$). To do so, we require a few definitions.
\par
With each $(\rho,\Ic)\in\Sigma_{n,q}$ we associate a pair of partitions $(\sigma,\tau)$, called the \emph{type} of~$(\rho,\Ic)$, where $\sigma$ is the partition whose parts are those $\rho_i$ with $i\in\Ic$ and $\tau$ is the partition whose parts are those $\rho_i$ with $i\not\in \Ic$. For example $((25123),\{2,3,5\})$ has type $((531),(22))$. We denote the type of $\alpha\in\Sigma_{n,q}$ by $\type(\alpha)$ and the set of possible such types by $\Theta_{n,q}$. Hence $\Theta_{n,q}$ contains all pairs of partitions $(\sigma,\tau)$ such that $\abs{\sigma}+\abs{\tau}=n$ and all parts of $\tau$ are strictly larger than $1$ for $q=2$.  
\par
We also define the \emph{type} of $\ulambda\in\Lambda_n$ as a pair of partitions $(\kappa,\lambda)$, where $\lambda=\ulambda(1)$ and~$\kappa$ has $\abs{\ulambda(f)}$ parts of size $\abs{f}$ as $f$ ranges through $\Phi\setminus\{X-1\}$. For example, when $q=3$, the type of $\ulambda\in\Lambda_n$ given by
\[
X-1\mapsto (31),\quad X+1\mapsto (31),\quad X^2+1\mapsto (2),\quad X^2+X-1\mapsto (21)
\]
equals $((2^51^4),(31))$. We denote the type of $\ulambda\in\Lambda_n$ by $\type(\ulambda)$. Note that, if $(\kappa,\lambda)$ is the type of $\ulambda\in\Lambda_n$, then $\abs{\kappa}+\abs{\lambda}=n$. Note that the unique irreducible character~$\chi^{\ulambda}$ of $\GL(n,q)$ with $\type(\ulambda)=(\varnothing,(n))$ is the trivial character.
\par
We define a partial order on pairs of partitions by 
\[
(\nu,\mu)\preceq (\kappa,\lambda)\quad\Leftrightarrow\quad \text{$\kappa$ refines $\nu$ and $\mu\unlhd\lambda$}
\]
and write $(\nu,\mu)\prec (\kappa,\lambda)$ if $(\nu,\mu)\preceq (\kappa,\lambda)$ and $(\nu,\mu)\ne(\kappa,\lambda)$. The following result gives a characterisation of subsets of $\GL(n,q)$ that are transitive on $\alpha$-flags as a design in the corresponding conjugacy class association scheme in the sense of Delsarte~\cite{Del1973}.
\begin{theorem}
\label{thm:designs_dual_dist}
Let $Y$ be a subset of $\GL(n,q)$ with dual distribution $(a'_{\ulambda})$ and let $\alpha\in\Sigma_{n,q}$. Then $Y$ is transitive on $\alpha$-flags if and only if
\[
a'_{\ulambda}=0\quad\text{for all $\ulambda\in\Lambda_n$ satisfying $\type(\alpha)\preceq\type(\ulambda)\prec (\varnothing,(n))$}.
\]
\end{theorem}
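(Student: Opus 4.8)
\noindent\emph{Proof strategy.}
The plan is to translate transitivity on $\alpha$-flags into a vanishing condition on the dual distribution, by way of the decomposition of the permutation module on $\alpha$-flags, and then to identify the relevant irreducible constituents combinatorially. First I would record the following Delsarte-type dictionary: if $\GL(n,q)$ acts transitively on a finite set $\Omega$ with permutation character $\Pi$, then a subset $Y$ of $\GL(n,q)$ is transitive on $\Omega$ if and only if $a'_{\ulambda}=0$ for every $\ulambda\in\Lambda_n$ such that $\chi^{\ulambda}$ is a constituent of $\Pi$ and $\chi^{\ulambda}\neq\chi^{X-1\mapsto(n)}$. To prove this, observe that transitivity of $Y$ is equivalent to the operator $T=\sum_{g\in Y}g$, acting on $\CC(\Omega)$ through the permutation action, being a scalar multiple of the all-ones matrix; equivalently, $T$ acts as $0$ on every isotypic component of $\CC(\Omega)$ other than the trivial one (on which it necessarily acts as $\abs{Y}$ times the identity, since $\GL(n,q)$ is transitive). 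On the other hand, \eqref{eqn:dual_dist_explicit} together with Burnside's theorem show that $a'_{\ulambda}=0$ precisely when the image of $T$ under the irreducible representation affording $\chi^{\ulambda}$ is zero, and the $\ulambda$-isotypic component of $\CC(\Omega)$ is nonzero exactly when $\chi^{\ulambda}$ occurs in $\Pi$. Hence it suffices to show that, for $\Omega$ the set of $\alpha$-flags, the constituents of $\Pi$ are exactly the $\chi^{\ulambda}$ with $\type(\alpha)\preceq\type(\ulambda)$.

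Next I would compute $\Pi$. Write $\alpha=(\rho,\Ic)$. The stabiliser in $\GL(n,q)$ of a fixed $\alpha$-flag is the group $H$ of matrices $g\in P_\rho$ with $\pi_i(g)=I$ for every $i\in\Ic$ (with $\pi_i$ as in~\eqref{eqn:projections}); this $H$ contains the unipotent radical of $P_\rho$ and is normal in $P_\rho$ with $P_\rho/H\cong\prod_{i\in\Ic}\GL(\rho_i,q)$. Inducing the trivial character first to $P_\rho$ and then to $\GL(n,q)$ therefore yields
\[
\Pi=\psi_1\cdot\psi_2\cdots\psi_{\ell(\rho)},
\]
where $\psi_i=\sum_{\unu\in\Lambda_{\rho_i}}f_{\unu}\,\chi^{\unu}$ is the regular character of $\GL(\rho_i,q)$ if $i\in\Ic$, and $\psi_i=\chi^{X-1\mapsto(\rho_i)}$ is the trivial character of $\GL(\rho_i,q)$ if $i\notin\Ic$. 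Since, for each $f\in\Phi$, the $\chi^{f\mapsto\lambda}$ span a copy of the ring of symmetric functions with $\chi^{f\mapsto\lambda}$ corresponding to the Schur function $s_\lambda$, the multiplicity of $\chi^{\ulambda}$ in $\Pi$ factors over $\Phi$, giving: $\chi^{\ulambda}$ is a constituent of $\Pi$ if and only if there exist $\unu^{(i)}\in\Lambda_{\rho_i}$ for $i\in\Ic$ such that $s_{\ulambda(f)}$ occurs in $\prod_{i\in\Ic}s_{\unu^{(i)}(f)}$ for every $f\neq X-1$, and $s_{\ulambda(X-1)}$ occurs in $\bigl(\prod_{i\in\Ic}s_{\unu^{(i)}(X-1)}\bigr)\,h_{\tau_1}h_{\tau_2}\cdots$, where $\tau$ is the partition obtained by rearranging the parts $\rho_i$ with $i\notin\Ic$, and $h_m=s_{(m)}$.

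It then remains to match this with the order $\preceq$. Write $\type(\alpha)=(\sigma,\tau)$ (so that $\sigma$, respectively $\tau$, is obtained by rearranging the parts $\rho_i$ with $i\in\Ic$, respectively $i\notin\Ic$) and $\type(\ulambda)=(\kappa,\lambda)$. If $\chi^{\ulambda}$ is a constituent of $\Pi$, pick $\unu^{(i)}$ as above; equating $\norm{\unu^{(i)}}=\rho_i$ for each $i\in\Ic$ and summing, the multiset of parts of $\kappa$ — which consists, for each $f\neq X-1$, of $\abs{\ulambda(f)}$ copies of $\abs{f}$ — together with $\abs{\sigma}-\abs{\kappa}$ parts equal to $1$, groups into the parts of $\sigma$, which is precisely the statement that $\kappa$ refines $\sigma$. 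For the second coordinate, Young's rule (Lemma~\ref{lem:Youngs_rule}) gives $h_{\tau_1}h_{\tau_2}\cdots=\sum_{\mu\unrhd\tau}K_{\mu\tau}s_\mu$, so $s_\lambda$ occurs in $s_\eta\cdot s_\mu$ for some partition $\eta$ and some $\mu\unrhd\tau$; by the Littlewood--Richardson rule (Lemma~\ref{lem:LLR_general}) the coefficient $c^\lambda_{\eta\mu}$ can be positive only if $\mu\subseteq\lambda$, which forces $\mu\unlhd\lambda$ and hence $\tau\unlhd\lambda$. Thus $\type(\alpha)\preceq\type(\ulambda)$.

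For the converse, assume $\kappa$ refines $\sigma$ and $\tau\unlhd\lambda$. A witness for the refinement supplies, for each $i\in\Ic$, nonnegative integers $m_{i,f}$ (for $f\neq X-1$) and $e_i$ with $\sum_{i\in\Ic}m_{i,f}=\abs{\ulambda(f)}$, $\sum_{i\in\Ic}e_i=\abs{\sigma}-\abs{\kappa}$, and $\rho_i=\sum_{f\neq X-1}m_{i,f}\abs{f}+e_i$ for each $i\in\Ic$. An easy induction from Remark~\ref{rem:LLR} shows that, given a partition $\eta$ and nonnegative integers summing to $\abs{\eta}$, one can choose partitions of those integers whose product of Schur functions contains $s_\eta$; applying this with $\eta=\ulambda(f)$ and the integers $(m_{i,f})_{i\in\Ic}$ yields admissible $\unu^{(i)}(f)$ for $f\neq X-1$, and applying it with the integers $(e_i)_{i\in\Ic}$ shows that $\prod_{i\in\Ic}s_{\unu^{(i)}(X-1)}$ can be arranged to contain $s_\xi$ for any prescribed partition $\xi$ with $\abs{\xi}=\abs{\sigma}-\abs{\kappa}=\abs{\lambda}-\abs{\tau}$. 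The remaining step — which I expect to be the main obstacle — is to choose such a $\xi$ with $s_\lambda$ occurring in $s_\xi\cdot h_{\tau_1}h_{\tau_2}\cdots$; the device I would use is to absorb the slack $N:=\abs{\lambda}-\abs{\tau}$ into $h_1^{N}$, since then $h_1^{N}h_{\tau_1}h_{\tau_2}\cdots=h_\zeta=\sum_{\eta\unrhd\zeta}K_{\eta\zeta}\,s_\eta$, where $\zeta$ is $\tau$ with $N$ parts equal to $1$ adjoined; a short partial-sum computation shows that $\tau\unlhd\lambda$ implies $\zeta\unlhd\lambda$ (using $\abs{\zeta}=\abs{\lambda}$ and that every part of $\lambda$ is at least $1$), so $K_{\lambda\zeta}>0$, and since $s_\xi$ occurs in $h_1^{N}$ for every $\xi$ with $\abs{\xi}=N$, it follows that $s_\lambda$ occurs in $s_\xi\cdot h_{\tau_1}h_{\tau_2}\cdots$ for some such $\xi$. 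Assembling the $\unu^{(i)}$ from all of these choices (they lie in $\Lambda_{\rho_i}$ by construction) exhibits $\chi^{\ulambda}$ as a constituent of $\Pi$, and together with the Delsarte-type dictionary this proves the theorem.
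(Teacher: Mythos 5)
Your proposal is correct and takes essentially the same route as the paper: your computation of the permutation character via the two-step induction through $H\le P_\rho$ and the Littlewood--Richardson/Young's rule analysis is exactly the paper's Proposition~\ref{pro:decomposition_perm_char}, and your operator-theoretic ``Delsarte dictionary'' (the operator $\sum_{y\in Y}y$ killing all nontrivial isotypic components of $\CC(\Omega)$, detected by $a'_{\ulambda}=0$) is a repackaging of the paper's incidence-matrix argument via Corollary~\ref{cor:col_sp_M} and orthogonality of $\1_Y$ to the eigenspaces $V_{\ulambda}$. The only minor divergence is in the converse constituent computation, where the paper chooses $\nu\unrhd\tau$ with $\nu\subseteq\lambda$ and a chain of partition-valued functions, while you adjoin $1$'s to $\tau$ and use $h_1^{N}$ with Kostka positivity---equivalent devices.
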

\par
Before we prove Theorem~\ref{thm:designs_dual_dist}, we discuss some of its consequences. The first one is that transitivity on $\alpha$-flags depends only on the type of $\alpha$.
\begin{corollary}
\label{cor:design_transitive_flags}
Let $\alpha,\beta\in\Sigma_{n,q}$ be of the same type and let $Y$ be a subset of $\GL(n,q)$. Then $Y$ is transitive on $\alpha$-flags if and only if $Y$ is transitive on $\beta$-flags.
\end{corollary}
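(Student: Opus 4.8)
The plan is to derive Corollary~\ref{cor:design_transitive_flags} directly from Theorem~\ref{thm:designs_dual_dist}, which has already reduced transitivity on $\alpha$-flags to a condition involving only the dual distribution of $Y$ and the type of $\alpha$. First I would observe that the characterisation in Theorem~\ref{thm:designs_dual_dist} reads: $Y$ is transitive on $\alpha$-flags if and only if $a'_{\ulambda}=0$ for all $\ulambda\in\Lambda_n$ with $\type(\alpha)\preceq\type(\ulambda)\prec(\varnothing,(n))$. Since this condition refers to $\alpha$ only through $\type(\alpha)$, and the dual distribution $(a'_{\ulambda})$ is an invariant of $Y$ alone, the set of $\ulambda$ picked out by the condition depends solely on $\type(\alpha)$. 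The whole argument is therefore a matter of unwinding definitions rather than any new computation.

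The key steps, in order: (1) Let $\alpha,\beta\in\Sigma_{n,q}$ with $\type(\alpha)=\type(\beta)=:(\sigma,\tau)\in\Theta_{n,q}$, and let $(a'_{\ulambda})$ be the dual distribution of $Y$. (2) Apply Theorem~\ref{thm:designs_dual_dist} to $\alpha$: $Y$ is transitive on $\alpha$-flags iff $a'_{\ulambda}=0$ for every $\ulambda\in\Lambda_n$ with $(\sigma,\tau)\preceq\type(\ulambda)\prec(\varnothing,(n))$. (3) Apply Theorem~\ref{thm:designs_dual_dist} to $\beta$: the condition is verbatim the same, since $\type(\beta)=(\sigma,\tau)$ as well. (4) Conclude that the two transitivity conditions are literally identical statements about $Y$, hence one holds iff the other does.

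I do not anticipate a genuine obstacle here; the only thing to be careful about is the logical dependence on Theorem~\ref{thm:designs_dual_dist}, whose proof has not yet been given in the excerpt but which we are entitled to assume. One might briefly remark (or leave implicit) that the constant $r$ witnessing transitivity need not be the same for $\alpha$-flags and $\beta$-flags — the corollary only asserts equivalence of transitivity, not equality of the constants — so nothing further is needed. If one preferred an alternative phrasing, one could note that by Theorem~\ref{thm:designs_dual_dist} the property ``transitive on $\alpha$-flags'' factors through the map $\alpha\mapsto\type(\alpha)$, which is exactly the assertion of the corollary; but the four-line deduction above is the cleanest route and is what I would write.
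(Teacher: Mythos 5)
Your proposal is correct and matches the paper's own reasoning: the paper states Corollary~\ref{cor:design_transitive_flags} as an immediate consequence of Theorem~\ref{thm:designs_dual_dist}, precisely because the dual-distribution criterion there depends on $\alpha$ only through $\type(\alpha)$. Your note that the transitivity constants for $\alpha$ and $\beta$ need not coincide is a fine (and harmless) clarification.
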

\par
Corollary~\ref{cor:design_transitive_flags} motivates the following definition.
\begin{definition}
For $(\sigma,\tau)\in\Theta_{n,q}$, we define a subset $Y$ of $\GL(n,q)$ to be \emph{$(\sigma,\tau)$-transitive} if $Y$ is transitive on the set of $\alpha$-flags for some $\alpha\in\Sigma_{n,q}$ of type $(\sigma,\tau)$.
\end{definition}
\par
Note that Example~\ref{ex:Singer} gives $(\sigma,\tau)$-transitive sets for $(\sigma,\tau)$ equal to $((1),(n-1))$ and $(\varnothing,(n-1,1))$. We may now restate Theorem~\ref{thm:designs_dual_dist} as follows.
\begin{corollary}
\label{cor:designs_dual_dist}
Let $Y$ be a subset of $\GL(n,q)$ with dual distribution $(a'_{\ulambda})$ and let $(\sigma,\tau)\in\Theta_{n,q}$. Then $Y$ is $(\sigma,\tau)$-transitive if and only if
\[
a'_{\ulambda}=0\quad\text{for all $\ulambda\in\Lambda_n$ satisfying $(\sigma,\tau)\preceq\type(\ulambda)\prec (\varnothing,(n))$}.
\]
\end{corollary}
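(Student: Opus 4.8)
The plan is to obtain Corollary~\ref{cor:designs_dual_dist} as an immediate consequence of Theorem~\ref{thm:designs_dual_dist}, with Corollary~\ref{cor:design_transitive_flags} doing the work of turning a statement about a particular $\alpha\in\Sigma_{n,q}$ into one about its type. First I would note that since $(\sigma,\tau)\in\Theta_{n,q}$ and $\Theta_{n,q}$ is by definition the image of $\type$ on $\Sigma_{n,q}$, there is at least one $\alpha\in\Sigma_{n,q}$ with $\type(\alpha)=(\sigma,\tau)$; fix one such $\alpha$. By the definition of $(\sigma,\tau)$-transitivity together with Corollary~\ref{cor:design_transitive_flags}, the set $Y$ is $(\sigma,\tau)$-transitive if and only if $Y$ is transitive on $\alpha$-flags. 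Now apply Theorem~\ref{thm:designs_dual_dist} to this $\alpha$: since $\type(\alpha)=(\sigma,\tau)$, the vanishing condition there reads precisely $a'_{\ulambda}=0$ for all $\ulambda\in\Lambda_n$ with $(\sigma,\tau)\preceq\type(\ulambda)\prec(\varnothing,(n))$. Chaining the two equivalences yields the corollary.

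There is essentially no obstacle here: the corollary carries no content beyond Theorem~\ref{thm:designs_dual_dist}, being merely its type-invariant restatement, legitimised by Corollary~\ref{cor:design_transitive_flags}. The only points to check are the bookkeeping facts used above, namely that a representative $\alpha$ of the given type exists (immediate from the definition of $\Theta_{n,q}$) and that the right-hand side condition is unchanged when $\alpha$ is replaced by any $\beta$ of the same type (immediate, since it is phrased solely in terms of $\type(\alpha)$).

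For completeness I would also record how a proof independent of Theorem~\ref{thm:designs_dual_dist} would run, so as to locate where the real work sits. The association-scheme computation already used in Section~\ref{sec:association_schemes} to prove $a'_{\ulambda}\ge 0$ extends to show that a subset $Y$ is transitive on a homogeneous space $\GL(n,q)/H$ if and only if $E_{\ulambda}\1_Y=0$, equivalently $a'_{\ulambda}=0$, for every $\ulambda$ with $\chi^{\ulambda}\ne\chi^{X-1\mapsto(n)}$ occurring in the permutation character $\Ind_H^{\GL(n,q)}\1$. Taking $H$ to be the stabiliser of an $\alpha$-flag, one expresses $\Ind_H^{\GL(n,q)}\1$ as a product, in the sense of parabolic induction, of the characters $\chi^{X-1\mapsto(\rho_i)}$ for $i\notin\Ic$ and of the regular characters of $\GL(\rho_i,q)$ for $i\in\Ic$; Young's rule (Lemma~\ref{lem:Youngs_rule}) then governs the $(X-1)$-part of the constituents, while the Littlewood--Richardson rule (Lemma~\ref{lem:LLR_general}) together with Remark~\ref{rem:LLR} governs the remaining parts. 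The hard part on that route is the combinatorial identification of the resulting set of constituents with the order filter $\{\ulambda\in\Lambda_n:\type(\ulambda)\succeq(\sigma,\tau)\}$; since Theorem~\ref{thm:designs_dual_dist} is assumed available, none of this is needed for the corollary itself.
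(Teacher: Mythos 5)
Your proof is correct and matches the paper's treatment: the paper presents Corollary~\ref{cor:designs_dual_dist} as a direct restatement of Theorem~\ref{thm:designs_dual_dist}, with Corollary~\ref{cor:design_transitive_flags} (equivalently, the definition of $(\sigma,\tau)$-transitivity) justifying the passage from a fixed representative $\alpha$ to its type. The extra sketch of an independent route is accurate but, as you note, not needed.
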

\par
A $(\varnothing,\tau)$-transitive set is just a subset of $\GL(n,q)$ that is transitive on $\tau$-flags (where~$\tau$ is a partition of $n$). In this case Corollary~\ref{cor:designs_dual_dist} specialises to the following result, which is a perfect $q$-analog of~\cite[Theorem 4]{MarSag2007}.
\begin{corollary}
\label{cor:flag-transitive}
Let $Y$ be a subset of $\GL(n,q)$ with dual distribution $(a'_{\ulambda})$ and let~$\tau$ be a partition of $n$. Then $Y$ is $(\varnothing,\tau)$-transitive if and only if
\[
a'_{\ulambda}=0\quad\text{for all $\ulambda\in\Lambda_n$ satisfying $\tau\unlhd\ulambda(1)\lhd(n)$}.
\]
\end{corollary}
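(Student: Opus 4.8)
The plan is to obtain Corollary~\ref{cor:flag-transitive} as the special case $\sigma=\varnothing$ of Corollary~\ref{cor:designs_dual_dist}. As noted just before the statement, a $(\varnothing,\tau)$-transitive subset of $\GL(n,q)$ is precisely one that is transitive on $\tau$-flags, so when $(\varnothing,\tau)\in\Theta_{n,q}$ Corollary~\ref{cor:designs_dual_dist} applies verbatim and gives: $Y$ is transitive on $\tau$-flags if and only if $a'_{\ulambda}=0$ for every $\ulambda\in\Lambda_n$ with $(\varnothing,\tau)\preceq\type(\ulambda)\prec(\varnothing,(n))$. It therefore suffices to show that, for $\ulambda\in\Lambda_n$, this chain of inequalities holds precisely when $\tau\unlhd\ulambda(1)\lhd(n)$.

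Write $\type(\ulambda)=(\kappa,\lambda)$, so that $\lambda=\ulambda(1)$ and $\abs{\kappa}+\abs{\lambda}=n$. Unwinding the definition of $\preceq$, the relation $(\varnothing,\tau)\preceq(\kappa,\lambda)$ says that $\kappa$ refines $\varnothing$ and $\tau\unlhd\lambda$. Refining $\varnothing$ forces $\abs{\kappa}\le 0$, hence $\kappa=\varnothing$; but this is automatic here, because $\abs{\tau}=n$ together with $\tau\unlhd\lambda$ already forces $\abs{\lambda}\ge n$, whence $\abs{\lambda}=n$ and $\kappa=\varnothing$. Thus, for $\ulambda\in\Lambda_n$, the relation $(\varnothing,\tau)\preceq\type(\ulambda)$ is equivalent to $\tau\unlhd\ulambda(1)$. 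In the same way $\type(\ulambda)\preceq(\varnothing,(n))$ holds for every $\ulambda\in\Lambda_n$ (the empty partition refines any partition, and $(n)$ dominates every partition of size at most $n$), so $\type(\ulambda)\prec(\varnothing,(n))$ amounts to $\type(\ulambda)\ne(\varnothing,(n))$, i.e.\ $\ulambda(1)\ne(n)$. Combining, $(\varnothing,\tau)\preceq\type(\ulambda)\prec(\varnothing,(n))$ is equivalent to $\tau\unlhd\ulambda(1)\unlhd(n)$ with $\ulambda(1)\ne(n)$, i.e.\ to $\tau\unlhd\ulambda(1)\lhd(n)$, as required.

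There is no substantive difficulty: once Corollary~\ref{cor:designs_dual_dist} is available, the proof is just a check of the partial order $\preceq$ in the degenerate case $\sigma=\varnothing$. The only point needing a little care is the case $q=2$ in which $\tau$ has one or more parts equal to $1$, so that $(\varnothing,\tau)\notin\Theta_{n,q}$ and Corollary~\ref{cor:designs_dual_dist} does not literally apply. Here one first replaces the $\tau$-flags by the $\alpha$-flags for the $\alpha=(\rho,\Ic)\in\Sigma_{n,2}$ where $\rho$ is $\tau$ regarded as a composition and $\Ic$ consists of the indices $i$ with $\rho_i=1$; over $\FF_2$ the prescribed bases of the one-dimensional quotient spaces carry no information, so transitivity on $\tau$-flags coincides with transitivity on $\alpha$-flags, and $\type(\alpha)=((1^m),\mu)$, where $m$ is the number of parts of $\tau$ equal to $1$ and $\mu$ collects the remaining parts. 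Running the same unwinding of $\preceq$ for $((1^m),\mu)$, using that $\Phi$ contains no linear polynomial other than $X-1$ when $q=2$ (which again forces the first component of $\type(\ulambda)$ to be $\varnothing$), and combining with the elementary fact that for $\abs{\lambda}=n$ one has $\mu\unlhd\lambda$ if and only if $\tau\unlhd\lambda$ (which holds because $\tau$ arises from $\mu$ by appending $n-\abs{\mu}$ ones), one recovers the same index set $\{\ulambda\in\Lambda_n:\tau\unlhd\ulambda(1)\lhd(n)\}$, completing the proof.
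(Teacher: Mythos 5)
Your proposal is correct and follows the paper's own route: Corollary~\ref{cor:flag-transitive} is obtained exactly as the specialisation $\sigma=\varnothing$ of Corollary~\ref{cor:designs_dual_dist} (equivalently Theorem~\ref{thm:designs_dual_dist}), with the partial order $\preceq$ unwound to the dominance condition $\tau\unlhd\ulambda(1)\lhd(n)$. Your extra care with the $q=2$, parts-equal-to-one case (passing to $\alpha$-flags with type $((1^m),\mu)$ and noting $\kappa$ has no parts of size $1$ over $\FF_2$) is a correct and slightly more careful treatment of an edge case the paper glosses over.
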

\par
Another immediate consequence of Theorem~\ref{thm:designs_dual_dist} is the following, of which Theorem~\ref{thm:main} arises as a special case.
\begin{corollary}
\label{cor:LVP-theorem}
Let $Y$ be a subset of $\GL(n,q)$ and suppose that $Y$ is $(\sigma,\tau)$-transitive for some $(\sigma,\tau)\in\Theta_{n,q}$. Then $Y$ is also $(\hat\sigma,\hat\tau)$-transitive for all $(\hat\sigma,\hat\tau)\in\Theta_{n,q}$ satisfying $(\sigma,\tau)\preceq(\hat\sigma,\hat\tau)$.
\end{corollary}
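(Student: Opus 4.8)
The plan is to deduce the corollary directly from the design characterisation in Corollary~\ref{cor:designs_dual_dist}, using only that $\preceq$ is a partial order on pairs of partitions. The key observation is a monotonicity statement: the family of indices $\ulambda$ whose dual-distribution entry must vanish in order to certify $(\hat\sigma,\hat\tau)$-transitivity is \emph{contained} in the family certifying $(\sigma,\tau)$-transitivity, whenever $(\sigma,\tau)\preceq(\hat\sigma,\hat\tau)$. So the whole argument reduces to set inclusion together with one application of the corollary in each direction.

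First I would record that $\preceq$ is transitive. By definition $(\nu,\mu)\preceq(\kappa,\lambda)$ means that $\kappa$ refines $\nu$ and $\mu\unlhd\lambda$; since refinement and the dominance order are both partial orders on $\Par$, transitivity of $\preceq$ follows componentwise. The only mildly delicate point is transitivity of refinement, which one obtains by composing the two underlying partitions of parts after absorbing the padding by $1$'s; this is already implicit in the paper's assertion that refinement is a partial order, so I would simply cite it.

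Next, let $(\sigma,\tau)\in\Theta_{n,q}$, let $Y$ be a $(\sigma,\tau)$-transitive subset of $\GL(n,q)$, and let $(\hat\sigma,\hat\tau)\in\Theta_{n,q}$ with $(\sigma,\tau)\preceq(\hat\sigma,\hat\tau)$. Take any $\ulambda\in\Lambda_n$ satisfying $(\hat\sigma,\hat\tau)\preceq\type(\ulambda)\prec(\varnothing,(n))$. By transitivity of $\preceq$ and the hypothesis $(\sigma,\tau)\preceq(\hat\sigma,\hat\tau)$, we get $(\sigma,\tau)\preceq\type(\ulambda)\prec(\varnothing,(n))$, so Corollary~\ref{cor:designs_dual_dist} applied to the $(\sigma,\tau)$-transitive set $Y$ yields $a'_{\ulambda}=0$. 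Hence $a'_{\ulambda}=0$ for every $\ulambda\in\Lambda_n$ with $(\hat\sigma,\hat\tau)\preceq\type(\ulambda)\prec(\varnothing,(n))$.

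Finally, since $(\hat\sigma,\hat\tau)\in\Theta_{n,q}$, the converse direction of Corollary~\ref{cor:designs_dual_dist} shows that $Y$ is $(\hat\sigma,\hat\tau)$-transitive, which completes the proof; Theorem~\ref{thm:main} is then the special case $\sigma=\hat\sigma=\varnothing$ (after replacing $\tau$, $\hat\tau$ by the partitions obtained by rearranging the parts of the given compositions, as allowed by Corollary~\ref{cor:design_transitive_flags}). I do not expect a genuine obstacle here: all of the substantive content is packaged into Theorem~\ref{thm:designs_dual_dist}, and the only thing to be careful about is to invoke the partial-order property of $\preceq$ (equivalently, of refinement and of the dominance order) rather than to re-derive it, and to note that the argument is vacuously fine when no $\ulambda$ meets the stated condition.
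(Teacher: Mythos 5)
Your proof is correct and is exactly the argument the paper intends: the corollary is stated as an immediate consequence of Theorem~\ref{thm:designs_dual_dist} (via Corollary~\ref{cor:designs_dual_dist}), and your reduction—transitivity of $\preceq$ shows the vanishing conditions certifying $(\hat\sigma,\hat\tau)$-transitivity form a subset of those already guaranteed by $(\sigma,\tau)$-transitivity—is the intended one.
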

\par
In the remainder of this section we shall give a proof of Theorem~\ref{thm:designs_dual_dist}. A key step will be the following decomposition of the permutation character of $\alpha$-flags.
\begin{proposition}
\label{pro:decomposition_perm_char}
Let $\alpha\in\Sigma_{n,q}$, let $\xi$ be the permutation character of $\alpha$-flags, and let
\[
\xi=\sum_{\ulambda\in\Lambda_n}m_{\ulambda}\chi^{\ulambda}
\]
be the decomposition of $\xi$ into irreducible characters. Then we have
\[
m_{\ulambda}\ne0\quad\Leftrightarrow\quad \type(\alpha)\preceq\type(\ulambda).
\]
\end{proposition}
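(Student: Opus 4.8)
The plan is to compute $\xi$ explicitly as an induced character and then decompose it using the combinatorial rules recalled in Section~\ref{sec:GL}. Write $\alpha=(\rho,\Ic)$ with $\Ic=\{i_1,\dots,i_k\}$ and let $(\sigma,\tau)=\type(\alpha)$. First I would observe that the stabiliser of a fixed $\alpha$-flag in $\GL(n,q)$ is the subgroup $H$ of $P_\rho$ consisting of those block upper-triangular matrices whose $i$-th diagonal block is the identity for $i\in\Ic$; indeed, fixing the flag $F$ means lying in $P_\rho$, and additionally fixing an ordered basis of each quotient $V_{i_j}/V_{i_j-1}$ forces $\pi_{i_j}$ to be the identity on $H$. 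Hence $\xi=\Ind_H^{\GL(n,q)}\1_H$. Inducing in two stages, first from $H$ up to $P_\rho$ and then from $P_\rho$ up to $\GL(n,q)$, and noting that $\Ind_H^{P_\rho}\1_H$ restricted along the diagonal projections is $\prod_{i\in\Ic}(\text{regular character of }\GL(\rho_i,q))\cdot\prod_{i\notin\Ic}(\1_{\GL(\rho_i,q)})$, the definition of the product $\phi_1\cdots\phi_{\ell(\rho)}$ in Section~\ref{sec:GL} gives, in the notation there,
\[
\xi=\Bigl(\prod_{i\in\Ic}\reg_{\rho_i}\Bigr)\cdot\Bigl(\prod_{i\notin\Ic}\1_{\rho_i}\Bigr),
\]
where $\reg_{m}$ denotes the regular character of $\GL(m,q)$ and $\1_m=\chi^{X-1\mapsto(m)}$ its trivial character.

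Next I would decompose each factor. For the trivial-character factors I use Young's rule (Lemma~\ref{lem:Youngs_rule}) applied to $f=X-1$: the product $\prod_{i\notin\Ic}\chi^{X-1\mapsto(\rho_i)}$ equals $\sum_{\lambda\unrhd\tau}K_{\lambda\tau}\,\chi^{X-1\mapsto\lambda}$, a sum of primary characters at $X-1$ indexed by partitions $\lambda$ of $\abs{\tau}$ that dominate $\tau$, all with positive multiplicity. For the regular-character factors I expand each $\reg_{\rho_i}=\sum_{\unu\in\Lambda_{\rho_i}}f_{\unu}\,\chi^{\unu}$; the key structural point is that $\chi^{\unu}$ for $\unu\in\Lambda_m$ ranges over all irreducible characters whose type $(\kappa',\lambda')$ satisfies $\abs{\kappa'}+\abs{\lambda'}=m$, with no constraint beyond that, and every one of them occurs (with multiplicity $f_{\unu}>0$). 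Multiplying all these factors together via Lemma~\ref{lem:LLR_general}, an irreducible $\chi^{\umu}$ appears in $\xi$ with nonzero multiplicity precisely when $\umu$ can be built as $\umu=\ualpha$ with $\ualpha$ a "sum" (in the Littlewood--Richardson sense, i.e.\ with some $c^{\cdot}_{\cdot\,\cdot}>0$) of one contribution from each factor — a $\chi^{X-1\mapsto\lambda_i}$ from the Young-rule expansion of each trivial factor and an arbitrary $\chi^{\unu^{(j)}}\in\Lambda_{\rho_{i_j}}$ from each regular factor.

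The remaining work, and the main obstacle, is the purely combinatorial verification that "$\chi^{\umu}$ arises in this way" is equivalent to "$(\sigma,\tau)\preceq\type(\umu)$". For the direction that a summand $\chi^{\umu}$ forces $(\sigma,\tau)\preceq\type(\umu)$: writing $\type(\umu)=(\kappa,\lambda)$, the contributions at $X-1$ build up (via iterated Littlewood--Richardson products, which only enlarge Young diagrams and add boxes) a partition $\lambda$ of size $\ge\abs{\tau}$ containing some $\lambda_0\unrhd\tau$, and one checks using the monotonicity of dominance under adding a horizontal strip that $\tau\unlhd\lambda$; meanwhile the non-$X-1$ part of $\umu$ is assembled from the types of the $\unu^{(j)}$ coming from those regular factors with $i_j\in\Ic$, together with whatever non-$X-1$ primary pieces the other regular factors contribute, and since each $\rho_i$ with $i\in\Ic$ is a part of $\sigma$, this exhibits $\kappa$ as refining $\sigma$. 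Conversely, given $(\kappa,\lambda)=\type(\umu)$ with $\sigma$'s parts obtainable by merging parts of $\kappa$ (after padding with $1$'s) and $\tau\unlhd\lambda$, I would exhibit an explicit choice of LR-pieces realising $\chi^{\umu}$: route each part of $\sigma$ through the corresponding regular factor using the refinement data to split $\kappa$ and the relevant sub-partition of $\lambda$ across those factors (here Remark~\ref{rem:LLR} guarantees the needed LR coefficients are positive whenever the diagram containments hold), and feed the $X-1$ part of the required dominance relation through Young's rule, which is exactly the statement $\tau\unlhd\lambda$ needed for $K_{\lambda\tau}>0$ after possibly absorbing extra boxes. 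Assembling these two implications gives $m_{\umu}\ne 0\Leftrightarrow(\sigma,\tau)\preceq\type(\umu)$, which is the claim; the delicate bookkeeping is entirely in matching the refinement-plus-dominance order $\preceq$ with the way boxes and primary-character supports can be distributed among the factors, and that is where I expect to spend most of the effort.
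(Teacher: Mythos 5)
Your plan follows essentially the same route as the paper's proof: identify the stabiliser $H\le P_\rho$, induce in two stages so that Frobenius reciprocity turns the $\Ic$-factors into regular characters and Young's rule (Lemma~\ref{lem:Youngs_rule}) handles the trivial factors, then match constituents via the Littlewood--Richardson rule (Lemma~\ref{lem:LLR_general} and Remark~\ref{rem:LLR}). The ``delicate bookkeeping'' you defer is exactly what the paper carries out --- choosing $\nu$ with $\abs{\nu}=\abs{\tau}$, $\nu\unrhd\tau$, $\nu\subseteq\lambda$, and building a chain $\ulambda_0\subseteq\cdots\subseteq\ulambda_r=\ulambda$ with $\norm{\ulambda_i}-\norm{\ulambda_{i-1}}=\rho_i$ using the refinement of $\sigma$ by $\kappa$ --- so your outline is sound and agrees with the published argument.
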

\begin{proof}
Write $\alpha=(\rho,\Ic)$, where $\rho=(\rho_1,\rho_2,\dots,\rho_k)$. We define a subgroup $H$ of $\GL(n,q)$ by 
\[
H=\left\{
\begin{bmatrix}
A_1 & *     & \cdots & *\\
       & A_2 & \cdots & *\\
       &        & \ddots & \vdots\\
       &        &            & A_k\\
\end{bmatrix}
:A_i\in\GL(\rho_i,q),\,\text{$A_i=I_{\rho_i}$ if $i\in\Ic$}.
\right\}.
\]
Then $H$ is the stabiliser of an $\alpha$-flag and we have 
\[
\xi=\Ind_H^{\GL(n,q)}(1_H),
\]
where $1_H$ is the trivial character of $H$. We first induce~$1_H$ to the parabolic subgroup~$P_\rho$. For $1\le i\le k$, let $\pi_i:P_\rho\to\GL(\rho_i,q)$ be the projections given in~\eqref{eqn:projections}. Hence
\[
1_H=\prod_{i=1}^k(1_i\circ\pi_i),
\]
where $1_i$ is the trivial character on the trivial subgroup of $\GL(\rho_i,q)$ for $i\in\Ic$ and~$1_i$ is the trivial character of $\GL(\rho_i,q)$ for $i\in\Jc$, where $\Jc$ is the complement of $\Ic$ in $\{1,2,\dots,k\}$. We have
\[
P_\rho/H\cong\prod_{i\in\Ic}\GL(\rho_i,q),
\]
as a direct product. By Frobenius reciprocity, for each $i\in\Ic$, the induction of $1_i$ to $\GL(\rho_i,q)$ equals
\[
\sum_{\ukappa\in\Lambda_{\rho_i}}f_{\ukappa}\,\chi^{\ukappa}
\]
(recall that $f_{\ukappa}$ is the degree of $\chi^{\ukappa}$). Hence we obtain
\[
\Ind_H^{P_\rho}(1_H)=\Bigg(\prod_{i\in\Jc}(1_i\circ\pi_i)\Bigg)\Bigg(\prod_{i\in\Ic}\sum_{\ukappa\in\Lambda_{\rho_i}}f_{\ukappa}\,(\chi^{\ukappa}\circ\pi_i)\Bigg).
\]
By the transitivity of induction, $\xi$ is obtained by inducing $\Ind_H^{P_\rho}(1_H)$ to $\GL(n,q)$. To determine the irreducible constituents of $\xi$, it is now enough to determine the irreducible constituents of the induced characters
\begin{equation}
\phi_1\cdot\phi_2\cdots\phi_k,   \label{eqn:prod_phi}
\end{equation}
where $\phi_i$ is an irreducible character of $\GL(\rho_i,q)$ for $i\in\Ic$ and $\phi_i$ is the trivial character of $\GL(\rho_i,q)$ for $i\in\Jc$. Since the product of characters is commutative, we may now assume without loss of generality that $\Ic=\{1,2,\dots,r\}$, where $r=\abs{\Ic}$. We put $\sigma=(\rho_1,\dots,\rho_r)$ and $\tau=(\rho_{r+1},\dots,\rho_k)$ (so that $(\sigma,\tau)$ is the type of $(\rho,\Ic)$). Now consider the parabolic subgroup $P=P_{(\rho_1,\dots,\rho_r,\abs{\tau})}$. We have
\[
P/P_\rho\cong\GL(\abs{\tau},q)/P_\tau
\]
and hence by Lemma~\ref{lem:Youngs_rule} the character~\eqref{eqn:prod_phi} induces on $P$ to
\[
\sum_{\nu\unrhd\tau}K_{\nu\tau}\chi^{X-1\mapsto\nu}\cdot\phi_1\cdots\phi_r.
\]
To determine the irreducible constituents of $\xi$, it is now enough to determine the irreducible constituents of the induced characters
\begin{equation}
\phi_0\cdot\phi_1\cdots\phi_r,   \label{eqn:induced_character_phi}
\end{equation}
where $\phi_0$ is a unipotent irreducible character of $\GL(\abs{\tau},q)$ corresponding to a partition~$\nu$ with $\nu\unrhd\tau$ and $\phi_i$ is an irreducible character of $\GL(\rho_i,q)$ for $1\le i\le r$.
\par
To prove the forward direction of the lemma, assume that $\chi^{\ulambda}$ is a constituent of some induced character of the form~\eqref{eqn:induced_character_phi}. Let $(\kappa,\lambda)$ be the type of~$\ulambda$ and let $(\kappa^{(i)},\lambda^{(i)})$ be the type of the element of $\Lambda_{\rho_i}$ indexing~$\phi_i$. Then Lemma~\ref{lem:LLR_general} implies that the parts of~$\kappa$ are exactly the parts of $\kappa^{(1)},\kappa^{(2)},\dots,\kappa^{(r)}$. Since $\phi_i$ is a character of $\GL(\rho_i,q)$, we find that $\kappa^{(i)}$ refines $(\rho_i)$ and hence~$\kappa$ refines~$\sigma$. By assumption there is some partition $\nu$ with $\abs{\nu}=\abs{\tau}$ such that $\nu\unrhd\tau$, which by Lemma~\ref{lem:LLR_general} satisfies $\nu\subseteq\lambda$. Hence we have $\lambda\unrhd\tau$, which proves the forward direction of the lemma.
\par
To prove the reverse direction, let $\ulambda\in\Lambda_n$ be such that its type $(\kappa,\lambda)$ satisfies $(\sigma,\tau)\preceq(\kappa,\lambda)$. Then~$\kappa$ is a refinement of $\sigma$ and $\tau\unlhd\lambda$. It is readily verified that there exists a partition $\nu$ with $\abs{\nu}=\abs{\tau}$ such that $\nu\unrhd\tau$ and $\nu\subseteq\lambda$. Let $\ulambda_0\in\Lambda_{\abs{\tau}}$ be given by $X-1\mapsto\nu$. Since $\kappa$ is a refinement of $\sigma$, there is a chain of partition-valued functions 
\[
\ulambda_0\subseteq \ulambda_1\subseteq\cdots\subseteq\ulambda_r=\ulambda
\]
with the property $\norm{\ulambda_i}-\norm{\ulambda_{i-1}}=\rho_i$ for all $i\in\{1,2,\dots,r\}$. 
By Remark~\ref{rem:LLR}, we can choose $\udelta_i\in\Lambda_{\rho_i}$ such that
\[
c_{\ulambda_{i-1},\udelta_i}^{\ulambda_i}>0\quad\text{for each $i\in\{1,2,\dots,r\}$}.
\]
Now we take $\phi_0=\chi^{\ulambda_0}$ and $\phi_i=\chi^{\udelta_i}$ for each $i\in\{1,2,\dots,r\}$. Then
\[
\phi_0\cdot\phi_1\cdots\phi_i
\]
has $\chi^{\ulambda_i}$ as an irreducible constituent for each $i\in\{1,2,\dots,r\}$. Hence $\chi^{\ulambda}$ is an irreducible constituent of $\phi_0\cdot\phi_1\cdots\phi_r$, which completes the proof.
\end{proof}
\par
Now, for $\alpha\in\Sigma_{n,q}$, let $\Fc_\alpha$ be the set of $\alpha$-flags and define $M_{\alpha}\in\CC(\GL(n,q),\Fc_\alpha\times\Fc_\alpha)$ to be the incidence matrix of elements of $\GL(n,q)$ versus left cosets of stabilisers of $\alpha$-flags by
\[
M_\alpha(g,(u,v))=\begin{cases}
1 & \text{for $gu=v$}\\
0 & \text{otherwise}.
\end{cases}
\]
Recall the definition of the vector spaces $V_{\ulambda}$ from Section~\ref{sec:association_schemes} and, for $(\sigma,\tau)\in\Theta_{n,q}$, define
\begin{equation}
U_{(\sigma,\tau)}=\sums{\ulambda\in\Lambda_n\\(\sigma,\tau)\preceq \type(\ulambda)}V_{\ulambda}.   \label{eqn:U_decomp}
\end{equation}
Note that in view of~\eqref{eqn:CC_Gn_decomp} this sum is direct.
\begin{corollary}
\label{cor:col_sp_M}
The column space of $M_{\alpha}$ equals $U_{\type(\alpha)}$.
\end{corollary}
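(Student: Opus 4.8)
The plan is to bypass $M_\alpha$ itself and study the Gram-type matrix $M_\alpha M_\alpha^\top\in\CC(\GL(n,q),\GL(n,q))$, which will turn out to lie in the Bose--Mesner algebra $\AA$; its column space is then immediate from the idempotent decomposition, and it coincides with the column space of $M_\alpha$ because $M_\alpha$ has real entries.

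First I would compute $M_\alpha M_\alpha^\top$. For $g,g'\in\GL(n,q)$,
\[
(M_\alpha M_\alpha^\top)(g,g')=\sum_{(u,v)\in\Fc_\alpha\times\Fc_\alpha}M_\alpha(g,(u,v))\,M_\alpha(g',(u,v))=\bigabs{\{u\in\Fc_\alpha:gu=g'u\}}=\xi(g^{-1}g'),
\]
where $\xi$ is the permutation character of $\alpha$-flags. Indeed, a summand is nonzero only when $v=gu=g'u$, so for each $u$ at most one $v$ contributes, and it does so precisely when $gu=g'u$; since $gu=g'u$ is equivalent to $(g^{-1}g')u=u$, the double sum counts the fixed points of $g^{-1}g'$ on $\Fc_\alpha$. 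In particular $M_\alpha M_\alpha^\top$ depends on $g,g'$ only through the conjugacy class of $g^{-1}g'$, so it is a linear combination of the $D_{\umu}$ and lies in $\AA$.

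Next I would pass to the idempotent basis. Writing $\xi=\sum_{\ulambda\in\Lambda_n}m_{\ulambda}\chi^{\ulambda}$ for the decomposition of $\xi$ into irreducibles and using~\eqref{eqn:E_matrix_chi} in the form $\chi^{\ulambda}(g^{-1}g')=\frac{\abs{\GL(n,q)}}{f_{\ulambda}}E_{\ulambda}(g,g')$, the previous display becomes
\[
M_\alpha M_\alpha^\top=\abs{\GL(n,q)}\sum_{\ulambda\in\Lambda_n}\frac{m_{\ulambda}}{f_{\ulambda}}\,E_{\ulambda}.
\]
Since the $E_{\ulambda}$ are pairwise orthogonal idempotents with column spaces $V_{\ulambda}$ and $\CC(\GL(n,q))=\bigoplus_{\ulambda}V_{\ulambda}$, this matrix acts on $V_{\ulambda}$ as multiplication by the scalar $\abs{\GL(n,q)}\,m_{\ulambda}/f_{\ulambda}$, which is nonzero exactly when $m_{\ulambda}\neq0$. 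Hence the column space of $M_\alpha M_\alpha^\top$ equals $\bigoplus_{\ulambda:\,m_{\ulambda}\neq0}V_{\ulambda}$.

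Finally I would transfer this back to $M_\alpha$. As $M_\alpha$ is real we have $M_\alpha^\top=M_\alpha^*$, so the column space of $M_\alpha M_\alpha^\top$ equals that of $M_\alpha$ (concretely $\ker M_\alpha M_\alpha^\top=\ker M_\alpha^\top$, via $\norm{M_\alpha^\top x}^2=x^*M_\alpha M_\alpha^\top x$, so the two matrices have equal rank while one column space visibly contains the other). Combining this with Proposition~\ref{pro:decomposition_perm_char}, which states that $m_{\ulambda}\neq0$ if and only if $\type(\alpha)\preceq\type(\ulambda)$, and with the definition~\eqref{eqn:U_decomp} of $U_{\type(\alpha)}$, the column space of $M_\alpha$ equals $U_{\type(\alpha)}$. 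The substance is concentrated in the first computation; the rest is routine manipulation with the $E_{\ulambda}$ plus the input of Proposition~\ref{pro:decomposition_perm_char}, and I do not expect a genuine obstacle, the only delicate point being that the column-space transfer must be read over $\CC$ with $M_\alpha^\top$ playing the role of the adjoint.
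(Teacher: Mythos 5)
Your proposal is correct and follows essentially the same route as the paper: compute $M_\alpha M_\alpha^\top(x,y)=\xi(x^{-1}y)$, express this matrix via~\eqref{eqn:E_matrix_chi} as $\abs{\GL(n,q)}\sum_{\ulambda}(m_{\ulambda}/f_{\ulambda})E_{\ulambda}$, and invoke Proposition~\ref{pro:decomposition_perm_char} to identify the nonzero coefficients with $\type(\alpha)\preceq\type(\ulambda)$. The only difference is cosmetic: you spell out the standard column-space transfer from $M_\alpha M_\alpha^\top$ to $M_\alpha$ (using that $M_\alpha$ is real), which the paper states without elaboration, while the paper instead verifies the reverse inclusion by acting on columns of the idempotents $E_{\ukappa}$.
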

\begin{proof}
Let $\xi$ be the permutation character of the set of $\alpha$-flags and define $C_\alpha\in\CC(\GL(n,q),\GL(n,q))$ by $C_\alpha(x,y)=\xi(x^{-1}y)$. Denoting by $\1_{xu=v}$ the indicator of the event that $x\in \GL(n,q)$ maps $u$ to $v$, we have
\begin{align*}
(M_\alpha M_\alpha^\top)(x,y)&=\sum_{u,v}M_\alpha(x,(u,v))M_\alpha(y,(u,v))\\
&=\sum_{u,v}\1_{xu=v}\1_{yu=v}\\
&=\sum_{u}\1_{xu=yu}\\
&=\sum_{u}\1_{x^{-1}yu=u}\\
&=\xi(x^{-1}y)=C_\alpha(x,y).
\end{align*}
Hence we have $C_\alpha=M_\alpha M_\alpha^\top$ and so the column space of $C_\alpha$ equals the column space of~$M_\alpha$.
\par
From Proposition~\ref{pro:decomposition_perm_char} and~\eqref{eqn:E_matrix_chi} we obtain 
\begin{equation}
C_\alpha=\abs{\GL(n,q)}\sums{\ulambda\in\Lambda_n\\\type(\alpha)\preceq\type(\ulambda)}(m_{\ulambda}/f_{\ulambda})E_{\ulambda}.   \label{eqn:C_t_from_E_lambda}
\end{equation}
Hence the column space of $C_\alpha$ is contained in $U_{\type(\alpha)}$. Conversely, let $v$ be a column of $E_{\ukappa}$ for some $\ukappa\in\Lambda_n$ satisfying $\type(\ukappa)\succeq \type(\alpha)$. From~\eqref{eqn:E_orthogonal} we have $E_{\ulambda}v=v$ for $\ukappa=\ulambda$ and $E_{\ulambda}v=0$ for $\ukappa\ne \ulambda$. Hence from~\eqref{eqn:C_t_from_E_lambda} we find that
\[
C_\alpha v=\abs{\GL(n,q)}\, (m_{\ukappa}/f_{\ukappa})\,v,
\]
and, since $m_{\ukappa}\ne 0$, we conclude that $v$ is in the column space of $C_\alpha$, as required.
\end{proof}
\par
We now complete the proof of Theorem~\ref{thm:designs_dual_dist}.
\begin{proof}[Proof of Theorem~\ref{thm:designs_dual_dist}]
Note that $Y$ is transitive on $\alpha$-flags if and only if
\[
\frac{1}{\abs{Y}}\,M_\alpha^\top\,\1_Y=\frac{1}{\abs{\GL(n,q)}}\,M_\alpha^\top\,\1_{\GL(n,q)},
\]
hence if and only if 
\[
\1_Y-\frac{\abs{Y}}{\abs{\GL(n,q)}}\1_{\GL(n,q)}
\]
is orthogonal to the column space of $M_\alpha$. In view of the orthogonal decomposition of this space given in Corollary~\ref{cor:col_sp_M} and the fact that $V_{X-1\mapsto (n)}$ is spanned by $1_{\GL(n,q)}$, we conclude that $Y$ is transitive on $\alpha$-flags if and only if $\1_Y$ is orthogonal to $V_{\ulambda}$ for each $\ulambda\in\Lambda_n$ satisfying $\type(\alpha)\preceq\type(\ulambda)\prec(\varnothing,(n))$. This is equivalent to the statement of the theorem.
\end{proof}


\section{Transitive subgroups}
\label{sec:subgroups}

In this section we classify subgroups $G$ of $\GL(n,q)$ that are $(\sigma,\tau)$-transitive. These results are essentially known. If $G$ is $((1),(n-1))$-transitive or $(\varnothing,(n-1,1))$-transitive or $((1^2),\varnothing)$ if $q=2$, then $G$ is transitive on $1$-spaces of $\FF_q^n$. Such subgroups have been classified by Hering~\cite{Her1974}, see also~\cite[Table~3.1]{GiuGlaPra2020} for a nice summary. However, as we always have examples of sharply $(\sigma,\tau)$-transitive subgroups in these cases (see Example~\ref{ex:Singer}), we shall henceforth assume that $(\sigma,\tau)$ is different from $((1),(n-1))$ and $(\varnothing,(n-1,1))$ and also different from $((1^2),\varnothing)$ if $q=2$. 
\par
First consider the case $n\ge 4$. By Corollary~\ref{cor:LVP-theorem}, $G$ is also $(\varnothing,(n-2,2))$-transitive, namely transitive on $2$-spaces of $\FF_q^n$. Kantor~\cite{Kan1973} proved that $G$ is either doubly transitive on $1$-spaces of $\FF_q^n$ or $G\cong\GammaL(1,2^5)$ as a subgroup of $\GL(5,2)$, which acts sharply transitive on $2$-spaces of $\FF_2^5$. Cameron and Kantor~\cite{CamKan1979} proved that, if $G$ is doubly transitive on $1$-spaces of $\FF_q^n$, then $G$ either contains $\SL(n,q)$, in which case $G$ is $((n-1),(1))$-transitive, or $G\cong A_7$ as a subgroup of $\GL(4,2)$. In fact it is computationally readily verified that the latter example is sharply $((31),\varnothing)$-transitive.
\par
Next consider the case $n=3$. Then by Corollary~\ref{cor:LVP-theorem}, $G$ is also $(\varnothing,(1^3))$-transitive when $q>2$ or $((1^3),\varnothing)$-transitive when $q=2$. That is, $G$ is transitive on $(1^3)$-flags in $\FF_q^3$, typically just called \emph{flags} in the literature. Kantor~\cite{Kan1987} proved that $G$ either contains $\SL(n,q)$ or $G$ acts sharply transitive on flags in $\FF_q^3$. Higman and McLaughlin~\cite{HigMcL1961} showed that in the latter case the only possibility is $G\cong\GammaL(1,2^3)$ as a subgroup of $\GL(3,2)$.
\par
Now consider the case $n=2$. Then we are left with the case that $G$ is $((1^2),\varnothing)$-transitive and $q>2$. The number of $((1^2),\varnothing)$-flags is $(q^2-1)(q-1)$ and the order of $G$ must be a multiple of this number. Since $\abs{\GL(2,q)}=(q^2-1)(q-1)q$, the index of $G$ in $\GL(2,q)$ must therefore be a divisor of $q$. Noting that $G$ is transitive on the $1$-spaces of $\FF_q^2$, an inspection of~\cite[Thm.~3.1]{GiuGlaPra2020} reveals that the only possible cases are $G\cong\GammaL(1,3^2)$ inside $\GL(2,3)$ or $q$ is one of the numbers $5,7,9,11,19,23,29,59$ and a computer verification reveals that only $\GL(2,3)$ and $\GL(2,5)$ contain subgroups $G$ in question. In the former case we have $G\cong\GammaL(1,3^2)$ and in the latter case $G$ is unique up to conjugation. In both cases $G$ is sharply $((1^2),\varnothing)$-transitive.
\par
We summarise these results in the following theorem.
\begin{theorem}
Suppose that $G$ is a $(\sigma,\tau)$-transitive nontrivial proper subgroup of $\GL(n,q)$ and $(\sigma,\tau)$ is different from $((1),(n-1))$ and $(\varnothing,(n-1,1))$ and also different from $((1^2),\varnothing)$ if $q=2$. Then one of the following holds:
\begin{enumerate}
\item $q>2$ and $G\ge\SL(n,q)$ and $G$ is $((n-1),(1))$-transitive.
\item $(n,q)=(2,3)$ and $G\cong\GammaL(1,3^2)$ is sharply $((1^2),\varnothing)$-transitive. 
\item $(n,q)=(2,5)$ and $G$ has order $96$ and 
is sharply $((1^2),\varnothing)$-transitive.
\item $(n,q)=(3,2)$ and $G\cong\GammaL(1,2^3)$ and $G$ is sharply $((1^3),\varnothing)$-transitive.
\item $(n,q)=(4,2)$ and $G\cong A_7$ is sharply $((31),\varnothing)$-transitive. 
\item $(n,q)=(5,2)$ and $G\cong\GammaL(1,2^5)$ is sharply $(\varnothing,(32))$-transitive.
\end{enumerate}
\end{theorem}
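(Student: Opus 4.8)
The plan is to reduce, via Corollary~\ref{cor:LVP-theorem}, to the classical classifications of subgroups of $\GL(n,q)$ that act transitively on low-dimensional subspaces or on complete flags, and then to identify by inspection (and, where necessary, by a finite computation) the few exceptional groups together with the exact types on which they are transitive. Throughout one uses Corollary~\ref{cor:design_transitive_flags} to replace $(\sigma,\tau)$-transitivity by transitivity on whatever $\alpha$-flag of type $(\sigma,\tau)$ is most convenient. Note first that the excluded types $((1),(n-1))$, $(\varnothing,(n-1,1))$ and $((1^2),\varnothing)$ (for $q=2$), together with the degenerate type $(\varnothing,(n))$, are precisely the ones already realised by a Singer cycle or a subgroup of one (Example~\ref{ex:Singer}), so these must be set aside from the start.

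The first step is purely combinatorial. For every $(\sigma,\tau)\in\Theta_{n,q}$ outside the cases just mentioned, the largest part of $\tau$ is at most $n-2$; since $\varnothing$ refines every partition, a short check of the order $\preceq$ gives $(\sigma,\tau)\preceq(\varnothing,(n-2,2))$ when $n\ge4$, gives $(\sigma,\tau)\preceq(\varnothing,(1^3))$ (respectively $(\sigma,\tau)\preceq((1^3),\varnothing)$ when $q=2$) when $n=3$, and gives $(\sigma,\tau)\preceq((1^2),\varnothing)$ when $n=2$. By Corollary~\ref{cor:LVP-theorem}, $G$ is then transitive on $2$-spaces of $\FF_q^n$ for $n\ge4$, transitive on complete flags of $\FF_q^3$ for $n=3$, and $((1^2),\varnothing)$-transitive for $n=2$; in the last case, after discarding the possibility $(\sigma,\tau)=((2),\varnothing)$ (which forces $G=\GL(2,q)$, hence not proper), one is left with $q>2$ and $(\sigma,\tau)=((1^2),\varnothing)$.

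The second step feeds each of these three situations into a known classification. For $n\ge4$, Kantor's theorem on groups transitive on $2$-spaces~\cite{Kan1973} leaves $G$ either doubly transitive on the points of $PG(n-1,q)$ or isomorphic to $\GammaL(1,2^5)$ inside $\GL(5,2)$; in the former case the Cameron--Kantor classification~\cite{CamKan1979} of groups $2$-transitive on projective points forces $G\ge\SL(n,q)$ or $G\cong A_7$ inside $\GL(4,2)$. For $n=3$, Kantor's flag-transitivity theorem~\cite{Kan1987} gives $G\ge\SL(3,q)$ or $G$ sharply transitive on flags, and Higman--McLaughlin~\cite{HigMcL1961} show the latter forces $G\cong\GammaL(1,2^3)$ inside $\GL(3,2)$. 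For $n=2$, counting the $(q^2-1)(q-1)$ flags of type $((1^2),\varnothing)$ shows that $\abs{\GL(2,q)}/q$ divides $\abs{G}$, so the index of $G$ divides $q$; since $G$ is transitive on the $q+1$ points of $PG(1,q)$, Hering's classification~\cite{Her1974} (tabulated in~\cite[Table~3.1]{GiuGlaPra2020}) reduces the possibilities to $\GammaL(1,3^2)\le\GL(2,3)$ and finitely many explicit prime powers $q$.

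The last step is bookkeeping: for each surviving $G$ --- namely $\SL(n,q)\le G\subsetneq\GL(n,q)$ with $q>2$, and the sporadic examples in $\GL(4,2)$, $\GL(5,2)$, $\GL(3,2)$, $\GL(2,3)$, $\GL(2,5)$ --- one computes the minimal type $(\sigma,\tau)$ on which $G$ is (sharply) transitive, after which Corollary~\ref{cor:LVP-theorem} accounts for every other type, yielding exactly the six cases listed. I expect the main obstacle to lie in this final tier of small-parameter work rather than in the general theory: verifying that $G\ge\SL(n,q)$ really is $((n-1),(1))$-transitive (equivalently, that $\SL(n,q)$ is transitive on ordered bases of hyperplanes), carrying out the finite computer checks that discard all prime powers $q\ne3,5$ in the $n=2$ case, and determining the precise sharp transitivity of $A_7\le\GL(4,2)$ and of $\GammaL(1,2^5)\le\GL(5,2)$, are all elementary but genuinely need to be done by hand or by machine.
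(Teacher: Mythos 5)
Your proposal follows essentially the same route as the paper: reduce via Corollary~\ref{cor:LVP-theorem} to transitivity on $2$-spaces ($n\ge4$), on flags of $\FF_q^3$ ($n=3$), or to the $((1^2),\varnothing)$ case ($n=2$), then invoke Kantor, Cameron--Kantor, Higman--McLaughlin, and Hering (via the Giudici--Glasby--Praeger table) together with the index-divides-$q$ count, and finish with finite/computer verification of the sporadic examples and of the exact transitivity types. The only differences are cosmetic (you make explicit the combinatorial check of $\preceq$, the trivial type $(\varnothing,(n))$, and the vacuous $((n),\varnothing)$-type cases), so the argument matches the paper's proof.
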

\par
It should be noted that there exist groups acting transitively on flags in $\FF_8^3$, namely~$\GammaL(1,2^9)$ and a subgroup of index $7$~\cite{HigMcL1961}. These groups however are not subgroups of~$\GL(3,8)$, but rather are subgroups of $\GammaL(3,8)$.


\section{Transitive sets and cliques}
\label{sec:Cliques}

In this section we consider so-called cliques in $\GL(n,q)$ and discuss their relationship to transitivity in $\GL(n,q)$.
\begin{definition}
Let $(\sigma,\tau)\in\Theta_{n,q}$. We define a subset $Y$ of $\GL(n,q)$ to be a \emph{$(\sigma,\tau)$-clique} if, for all distinct $x,y\in Y$, there is no $\alpha$-flag with $\type(\alpha)=(\sigma,\tau)$ fixed by~$x^{-1}y$.
\end{definition}
\par
For $\umu\in\Lambda$ we define $\umu'\in\Lambda$ to be the mapping $\umu':\Phi\to\Par$ given by $\umu'(f)=\umu(f)'$. Note that, if $\type(\umu)=(\nu,\mu)$, then we have $\type(\umu')=(\nu,\mu')$.
\par
The following result should be compared with Corollary~\ref{cor:designs_dual_dist}, showing that the concept of a $(\sigma,\tau)$-clique is dual to the concept of $(\sigma,\tau)$-transitivity.
\begin{theorem}
\label{thm:cliques_inner_dist}
Let $Y$ be a subset of $\GL(n,q)$ with inner distribution $(a_{\umu})$ and let $(\sigma,\tau)\in\Theta_{n,q}$. Then $Y$ is a $(\sigma,\tau)$-clique if and only if
\[
a_{\umu}=0\qquad\text{for all $\umu\in\Lambda_n$ satisfying $(\tau,\sigma)\preceq\type(\umu')\prec(\varnothing,(n))$}.
\]
\end{theorem}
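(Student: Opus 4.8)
The plan is to show that the theorem is the exact dual of the characterisation of $(\sigma,\tau)$-transitivity, with the incidence data now carried by the class function counting fixed flags. Fix $\alpha\in\Sigma_{n,q}$ with $\type(\alpha)=(\sigma,\tau)$, let $\xi$ be the permutation character of $\alpha$-flags, so that $\xi(g)$ is the number of $\alpha$-flags fixed by $g$, and set $\xi_{\umu}=\xi(R_{\umu})$. By definition $Y$ is a $(\sigma,\tau)$-clique if and only if $\xi(x^{-1}y)=0$ for all distinct $x,y\in Y$, which says precisely that no $x^{-1}y$ with $x\ne y$ lies in a conjugacy class with $\xi_{\umu}\ne 0$; since any non-identity class $C_{\umu}$ with $a_{\umu}\ne 0$ contains some such $x^{-1}y$, and conversely, this is equivalent to $a_{\umu}=0$ for every non-identity class $C_{\umu}$ with $\xi_{\umu}\ne 0$. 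Hence it suffices to prove the combinatorial lemma that $R_{\umu}$ fixes at least one $\alpha$-flag if and only if $(\tau,\sigma)\preceq\type(\umu')$, after which the theorem follows because $\type(\umu')=(\varnothing,(n))$ holds exactly for the identity class while $(\tau,\sigma)\preceq(\varnothing,(n))$ always (as $\varnothing$ refines every partition and $(n)$ dominates every partition), so that ``non-identity and $(\tau,\sigma)\preceq\type(\umu')$'' is the same as ``$(\tau,\sigma)\preceq\type(\umu')\prec(\varnothing,(n))$''.

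For the lemma I would work with $\FF_q[X]$-modules. Regard $\FF_q^n$ as a module $V$ with $X$ acting as $R_{\umu}$, and write $\alpha=(\rho,\Ic)$ with $\Jc$ the complement of $\Ic$ in $\{1,\dots,\ell(\rho)\}$. Because an $\alpha$-flag additionally prescribes ordered bases of the quotients indexed by $\Ic$, $R_{\umu}$ fixes an $\alpha$-flag if and only if $V$ has a submodule chain $0=V_0\subseteq\dots\subseteq V_{\ell(\rho)}=V$ with $\dim(V_i/V_{i-1})=\rho_i$ for all $i$ and with $X-1$ annihilating $V_i/V_{i-1}$ for $i\in\Ic$. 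Split $V=V^{(1)}\oplus V^{(\ne 1)}$ into the $(X-1)$-primary part and the sum of the remaining primary parts; this splitting is canonical and respected by all submodules, so a submodule chain of $V$ amounts to a compatible pair of submodule chains of the two summands. Writing $\type(\umu)=(\kappa,\lambda)$, the module $V^{(1)}$ has type $\lambda$ and dimension $\abs{\lambda}$, while $V^{(\ne 1)}$ has dimension $\abs{\kappa}$ and composition-factor degrees equal to the parts of $\kappa$; moreover a quotient is annihilated by $X-1$ precisely when it sits inside $V^{(1)}$ and is semisimple there, so every $\Ic$-step must lie entirely in $V^{(1)}$. Thus $R_{\umu}$ fixes an $\alpha$-flag if and only if there exist integers $b_i$ with $0\le b_i\le\rho_i$ for $i\in\Jc$ such that $V^{(\ne 1)}$ admits a submodule chain with $\Jc$-step sizes $(b_i)$ (and trivial $\Ic$-steps), and $V^{(1)}$ admits a submodule chain, following the $\rho$-pattern, with semisimple $\Ic$-steps of sizes $\rho_i$ and arbitrary $\Jc$-steps of sizes $\rho_i-b_i$.

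The first half is governed by the fact that distinct primary parts commute, so a composition series of $V^{(\ne 1)}$ can realise its composition factors in any order; a submodule chain with prescribed step sizes then exists exactly when the multiset of parts of $\kappa$ partitions into blocks with those sums, and — allowing the trivial $\Ic$-steps and padding each block by $1$s — a valid choice of $(b_i)$ exists if and only if $\kappa$ refines $\tau$. The second half rests on a structural fact about a module $N$ of type $\lambda$ over a discrete valuation ring with maximal ideal $\mathfrak{m}$: $N$ admits a submodule chain with $\mathfrak{m}$-annihilated quotients of prescribed dimensions, possibly interspersed with unconstrained steps, if and only if the partition formed by those dimensions is dominated by $\lambda'$. (For the forward direction one reorders so that the $\mathfrak{m}$-annihilated steps come first in decreasing order; then the $k$-th term of the chain lies in $\{v:\mathfrak{m}^k v=0\}$, which has dimension $\lambda'_1+\dots+\lambda'_k$. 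The reverse direction is an explicit construction, or an induction on $\abs{\lambda}$.) Because inside the $(X-1)$-primary module $V^{(1)}$ every submodule has submodules of all smaller dimensions, the unconstrained $\Jc$-steps can be adjusted and redistributed freely, so the second condition holds, for any $(b_i)$ furnished by the first half, exactly when $\sigma\unlhd\lambda'$. Combining the two halves gives the lemma: $R_{\umu}$ fixes an $\alpha$-flag if and only if $\kappa$ refines $\tau$ and $\sigma\unlhd\lambda'$, i.e.\ if and only if $(\tau,\sigma)\preceq(\kappa,\lambda')=\type(\umu')$.

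The main obstacle is the reverse (existence) direction of the discrete-valuation-ring fact: given a partition $c\unlhd\lambda'$, one must actually exhibit a chain of submodules of $N$ with $\mathfrak{m}$-annihilated successive quotients of the sizes recorded by $c$, and then check that such chains can be reordered and interleaved with arbitrary steps to match a given pattern of block sizes. A lesser technical point is the decoupling of the two halves — one needs the $(b_i)$ witnessing ``$\kappa$ refines $\tau$'' not to obstruct the chain in $V^{(1)}$ — which is precisely why one uses the refinability and free placement of arbitrary steps inside the primary module $V^{(1)}$. Everything else — the reduction of the theorem to the fixed-flag lemma, the primary decomposition, and the order-flexibility of composition factors in $V^{(\ne 1)}$ — is routine.
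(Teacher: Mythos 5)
Your overall reduction is exactly the paper's: since fixing an $\alpha$-flag is a class property and the identity class is the unique one with $\type(\umu')=(\varnothing,(n))$, the clique condition becomes $a_{\umu}=0$ on the non-identity classes that fix a flag of type $(\sigma,\tau)$, and everything rests on the lemma that $C_{\umu}$ fixes such a flag if and only if $\kappa$ refines $\tau$ and $\sigma\unlhd\lambda'$, where $\type(\umu)=(\kappa,\lambda)$. Where you diverge is in proving that lemma: you pass to the $\FF_q[X]$-module picture, split off the $(X-1)$-primary part, and reduce to (i) a Jordan--H\"older/refinement statement for the non-unipotent part and (ii) a dominance criterion for chains with $(X-1)$-annihilated quotients in a module over a discrete valuation ring; the paper instead argues directly on the Jordan canonical form, rearranging rows and columns to exhibit a block upper-triangular representative with identity blocks $I_{\sigma_1},I_{\sigma_2},\dots$ (sufficiency), and for necessity invokes that companion matrices of irreducibles fix no proper subspace together with a count of Jordan blocks with eigenvalue $1$. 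Your route is cleaner conceptually and makes the two halves of the criterion genuinely independent, at the price of importing a structural fact about finite modules over a DVR; the paper's route is shorter but just as terse at the same pressure point.

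The gap you flag is real: both directions of your DVR fact still need proof. For the forward direction, the assertion that the $(X-1)$-annihilated steps can be moved to the bottom in decreasing order requires an exchange argument (e.g.\ a socle count: if $M/D$ is $(X-1)$-annihilated of dimension $s$ then the socle of $M$ has dimension at least $s$, so an annihilated step can be pushed below an unconstrained one; adjacent annihilated steps can be swapped by a short computation in a module killed by $(X-1)^2$); alternatively one can refine all unconstrained steps to elementary ones and use that the existence of an all-elementary chain with content $c$ is equivalent to $K_{\lambda'c}>0$, i.e.\ to $c\unlhd\lambda'$. For the reverse direction note a simplification you have not used: the definition of a $(\sigma,\tau)$-clique quantifies over \emph{all} $\alpha$ with $\type(\alpha)=(\sigma,\tau)$, so for sufficiency you only need one arrangement. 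You may therefore place the annihilated steps at the bottom, sorted decreasingly (taking $V_k^{(1)}=\ker(X-1)^k$-type subspaces, which exist precisely because $\sigma\unlhd\lambda'$), and put all unconstrained mass on top; this is exactly the paper's explicit construction and removes the need to interleave prescribed patterns. With these two ingredients supplied, your argument is complete and yields the same criterion, hence the theorem.
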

\begin{proof}
Fix $\umu\in\Lambda_n$. Note that, for $\alpha\in\Sigma_{n,q}$, either all elements in $C_{\umu}$ fix an $\alpha$-flag or none of the elements in $C_{\umu}$. We show the elements in $C_{\umu}$ fix an $\alpha$-flag with $\type(\alpha)=(\sigma,\tau)$ if and only if $(\tau,\sigma)\preceq (\nu,\mu)$, where $(\nu,\mu)$ is the type of $\umu'$.
\par
First assume that $(\tau,\sigma)\preceq (\nu,\mu)$, namely $\sigma\unlhd\mu$ and $\nu$ refines $\tau$. Since $\sigma\unlhd\mu$, rearranging rows and columns of the Jordan canonical form of $C_{\umu}$ shows that $C_{\umu}$ contains a block upper-triangular matrix whose diagonal blocks are $I_{\sigma_1},I_{\sigma_2},\dots$ followed by $\abs{\mu}-\abs{\sigma}$ blocks of order $1$ followed by blocks of order $\nu_1,\nu_2,\dots$. Since $\nu$ refines $\tau$, this matrix fixes an $\alpha$-flag with $\type(\alpha)=(\sigma,\tau)$.
\par
Now let $g\in C_{\umu}$ be in Jordan canonial form and assume that $g$ fixes an $\alpha$-flag with $\type(\alpha)=(\sigma,\tau)$. By~\cite[Proposition~4.4]{LewReiSta2014} the companion matrix of an irreducible polynomial in $\FF_q[X]$ of degree $d$ does not fix a proper subspace of $\FF_q^d$. Hence $\nu$ must refine $\tau$. Also note that $g$ has $\mu_i$ Jordan blocks with eigenvalue $1$ of order at least $i$ and each such Jordan block of order $i$ fixes a $\beta$-flag with $\type(\beta)=((1^i),\varnothing)$. Hence~$g$ must have at least 
\[
\sigma_i-\sum_{j=1}^{i-1}(\mu_j-\sigma_j)
\]
Jordan blocks with eigenvalue $1$ of order at least $i$. The latter statement is equivalent to $\sigma\unlhd\mu$.
\end{proof}
\par
In what follows we establish relationships between $(\sigma,\tau)$-cliques and $(\sigma,\tau)$-transitive sets in $\GL(n,q)$. 
\begin{theorem}
\label{thm:bounds_cliques_designs}
Let $(\sigma,\tau)\in\Theta_{n,q}$, let $H$ be the stabiliser of an $\alpha$-flag with $\type(\alpha)=(\sigma,\tau)$, and let $Y$ be a subset of $\GL(n,q)$.
\begin{enumerate}[(1)]
\item If $Y$ is a $(\sigma,\tau)$-clique, then $\abs{Y}\le \abs{\GL(n,q)}/\abs{H}$ with equality if and only if $Y$ is $(\sigma,\tau)$-transitive.
\item If $Y$ is $(\sigma,\tau)$-transitive, then $\abs{Y}\ge \abs{\GL(n,q)}/\abs{H}$ with equality if and only if~$Y$ is a $(\sigma,\tau)$-clique.
\end{enumerate}
In both cases, equality implies that $Y$ is sharply $(\sigma,\tau)$-transitive. 
\end{theorem}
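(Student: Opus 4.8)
The plan is to bypass the association scheme machinery and argue directly from the transitive action of $G=\GL(n,q)$ on the set $\Fc=\Fc_\alpha$ of $\alpha$-flags, for a fixed $\alpha\in\Sigma_{n,q}$ of type $(\sigma,\tau)$. Write $N=\abs{\GL(n,q)}$. Since $G$ acts transitively on $\Fc$ with a point stabiliser conjugate to $H$, we have $\abs{\Fc}=N/\abs{H}$, and in fact $\abs{\{g\in G:g\omega=\omega'\}}=\abs{H}$ for all $\omega,\omega'\in\Fc$. Fix once and for all a base flag $\omega_0\in\Fc$.

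First I would record two reformulations. (i) By the analysis in the proof of Theorem~\ref{thm:cliques_inner_dist}, whether an element of $G$ fixes some $\beta$-flag depends only on the type of $\beta$; since $x^{-1}y$ fixes a flag $\omega$ exactly when $x\omega=y\omega$, it follows that $Y$ is a $(\sigma,\tau)$-clique if and only if, for \emph{every} $\omega\in\Fc$, the map $Y\to\Fc$ given by $x\mapsto x\omega$ is injective. (ii) If $Y$ is $(\sigma,\tau)$-transitive with constant $r$, then fixing $\omega$ and letting $\omega'$ range over $\Fc$, the fibres $\{x\in Y:x\omega=\omega'\}$ partition $Y$ into $\abs{\Fc}$ classes each of size $r$, so $\abs{Y}=r\abs{\Fc}=r\,N/\abs{H}$; a nonempty transitive set has $r\ge1$, so $\abs{Y}\ge N/\abs{H}$, with equality precisely when $r=1$, i.e.\ when $Y$ is \emph{sharply} $(\sigma,\tau)$-transitive. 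This already yields the inequality in~(2) together with its equality clause.

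Then I would assemble the rest. For~(1): if $Y$ is a $(\sigma,\tau)$-clique then $x\mapsto x\omega_0$ is injective on $Y$, giving $\abs{Y}\le\abs{\Fc}=N/\abs{H}$; if equality holds, then $x\mapsto x\omega$ is a \emph{bijection} $Y\to\Fc$ for every $\omega\in\Fc$ (injective by~(i), surjective by cardinality), so for any $\omega,\omega'\in\Fc$ there is exactly one $g\in Y$ with $g\omega=\omega'$, whence $Y$ is sharply $(\sigma,\tau)$-transitive; conversely a $(\sigma,\tau)$-clique that is also $(\sigma,\tau)$-transitive has $\abs{Y}=r\,N/\abs{H}\ge N/\abs{H}$, which with $\abs{Y}\le N/\abs{H}$ forces equality. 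For the remaining implication of~(2): if $Y$ is $(\sigma,\tau)$-transitive with $\abs{Y}=N/\abs{H}$, then $r=1$ by~(ii), so $Y$ is sharply transitive on $\Fc$, and hence for distinct $x,y\in Y$ no flag $\omega$ satisfies $x\omega=y\omega$ (two distinct elements cannot agree on $\omega$), so $Y$ is a $(\sigma,\tau)$-clique; the converse implication is part~(1).

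I expect no serious obstacle, only two points of care. The first is the legitimacy of reformulation~(i): one must genuinely invoke that ``fixes some $\beta$-flag'' is constant across each type (the content of the proof of Theorem~\ref{thm:cliques_inner_dist}), so that restricting attention to the single chosen $\alpha$, and to the single family of maps $x\mapsto x\omega$ with $\omega\in\Fc_\alpha$, loses nothing. The second is the reading of ``transitive'': for the lower bound in~(2) to hold one must require $r\ge 1$, equivalently $Y\neq\varnothing$. I would also remark that this argument is the group-theoretic shadow of Delsarte's linear-programming bound: by Corollary~\ref{cor:designs_dual_dist} and Theorem~\ref{thm:cliques_inner_dist}, a $(\sigma,\tau)$-transitive set and a $(\sigma,\tau)$-clique are a design and a clique attached to complementary systems of eigenspaces of the conjugacy class association scheme, and the same three conclusions follow from the Cauchy--Schwarz inequality $(\1_Y^{\top}C_\alpha\1_{\GL(n,q)})^2\le(\1_Y^{\top}C_\alpha\1_Y)\,(\1_{\GL(n,q)}^{\top}C_\alpha\1_{\GL(n,q)})$ applied to the positive semidefinite matrix $C_\alpha=M_\alpha M_\alpha^{\top}$, using $C_\alpha\1_{\GL(n,q)}=N\1_{\GL(n,q)}$ and that the kernel of $C_\alpha$ is the orthogonal complement of $U_{(\sigma,\tau)}$.
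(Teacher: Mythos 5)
Your argument is correct and is essentially the paper's own proof in different clothing: counting the fibres of the evaluation maps $x\mapsto x\omega$ on $\Fc_\alpha$ is the same double count as the paper's identity $\sum_{g\in\GL(n,q)}\abs{Y\cap gH}=\abs{Y}\cdot\abs{H}$ over cosets of flag stabilisers, and the equality analysis proceeds identically. Your explicit appeals to the type-invariance of flag-fixing (from the proof of Theorem~\ref{thm:cliques_inner_dist}) and to the degenerate $r=0$ case only spell out what the paper leaves implicit.
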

\begin{proof}
Since, for each $(x,y)\in H\times Y$, there is a unique $g\in\GL(n,q)$ such that $gx=y$, we have
\begin{equation}
\sum_{g\in\GL(n,q)}\abs{Y\cap gH}=\abs{Y}\cdot\abs{H}.   \label{eqn:clique_identity}
\end{equation}
The quotient of any two distinct elements in $Y\cap gH$ fixes an $\alpha$-flag of type $(\sigma,\tau)$. Hence, if $Y$ is a $(\sigma,\tau)$-clique, then each summand on the left hand side of~\eqref{eqn:clique_identity} is at most $1$, which gives the bound in (1). If $H$ is the stabiliser of the $\alpha$-flag $F$, then $gH$ contains precisely all elements of $\GL(n,q)$ mapping $F$ to $gF$. Hence, if $Y$ is $(\sigma,\tau)$-transitive, then each summand on the left hand side of~\eqref{eqn:clique_identity} must be at least $1$, which gives the bound in (2).
In both cases, equality occurs if and only if $\abs{Y\cap gH'}=1$ for each $g\in G$ and the stabiliser $H'$ of each $\alpha$-flag of type $(\sigma,\tau)$. By the same reasoning as above, this establishes the characterisations of equality.
\end{proof}
\par
Note that, if $H$ is the stabiliser of an $\alpha$-flag with $\type(\alpha)=(\sigma,\tau)$, then an elementary counting argument gives
\[
\frac{\abs{\GL(n,q)}}{\abs{H}}=\frac{[n]_q!}{\big(\prod_{i\ge 1}[\sigma_i]_q!\big)\big(\prod_{i\ge 1} [\tau_i]_q!\big)}\prod_{i\ge 1}\prod_{j=1}^{\sigma_i-1}(q^{\sigma_i}-q^j),
\]
where, for a nonnegative integer $m$, the \emph{$q$-factorial} of $m$ is given by
\[
[m]_q!=[m]_q[m-1]_q\cdots[1]_q\quad\text{with $[k]_q=\frac{q^k-1}{q-1}$}.
\]
\par
In view of Theorems~\ref{thm:cliques_inner_dist} and~\ref{thm:bounds_cliques_designs} one can rule out the existence of sharply $(\sigma,\tau)$-transitive subsets of $\GL(n,q)$ by linear programming, a standard method in the theory of association schemes. The key observation is that the entries in the dual distribution of a subset of an association scheme are real and nonnegative (see Section~\ref{sec:association_schemes}). The so-called linear-programming (LP) bound for $(\sigma,\tau)$-cliques is the maximum of
\[
\sum_{\umu\in\Lambda_n}a_{\umu}
\]
subject to the constraints
\begin{gather*}
a_{\umu}\ge 0\quad\text{for all $\umu\in\Lambda_n$},\\[1ex]
\sum_{\umu\in\Lambda_n}\Im(\chi^{\ulambda}_{\umu})\;a_{\umu}=0\quad\text{and}\quad \sum_{\umu\in\Lambda_n}\Re(\chi^{\ulambda}_{\umu})\;a_{\umu}\ge 0\quad\text{for all $\ulambda\in\Lambda_n$},\\[1ex]
a_{\umu}=0\quad\text{for all $\umu\in\Lambda_n$ satisfying $(\tau,\sigma)\preceq\type(\umu')\prec(\varnothing,(n))$}.
\end{gather*}
Here the second constraint comes from the fact that the entries in the dual distribution of a subset of $\GL(n,q)$ are real and nonnegative. We have determined the LP bound for $(\sigma,\tau)$-cliques in $\GL(n,2)$ for $n\in\{2,3,4,5\}$. The LP bound coincides with the bound of Theorem~\ref{thm:bounds_cliques_designs}~(i) except for those pairs $(\sigma,\tau)$ shown in Table~\ref{tab:LP_bounds}. Consequently no sharply $(\sigma,\tau)$-transitive subsets of $\GL(n,q)$ can exist in these cases.
\par
\begin{table}[ht]
\caption{Bounds for cliques in $\GL(4,2)$ and $\GL(5,2)$.}
\label{tab:LP_bounds}
\centering
\begin{tabular}{c|c|c}
\hline
$(\sigma,\tau)$ & \multicolumn{1}{|c|}{Bound of Thm.~\ref{thm:bounds_cliques_designs}} & \multicolumn{1}{|c}{LP bound}\\\hline\hline
$((21^2),\varnothing)$ & 630 & 420\\
$((1^2),(2))$ & 105 & 84\\
$((2),(2))$ & 210 & 168\\\hline
$((32),\varnothing)$ & 156\,240 & 139\,500\\
$((31^2),\varnothing)$ & 78\,120 & 53\,010\\
$((221),\varnothing)$ & 39\,060 & 24\,180\\
$((21^3),\varnothing)$ & 19\,530 & 11\,718\\
$((3),(2))$ & 26\,040 & 19\,530\\
$((21),(2))$ & 6\,510 & 3\,550\\
$((1^3),(2))$ & 3\,255 & 2\,604\\
$((1),(22))$ & 1\,085 & 805\\\hline
\end{tabular}
\end{table}


\section{Existence results}
\label{sec:Existence}

In this section we show that, for a partition $\sigma$, nonnegative integers $\tau_2\ge\tau_3\ge \cdots$, and sufficiently large $n$, there exist $(\sigma,\tau)$-transitive sets in $\GL(n,q)$ that are arbitrarily small compared to $\GL(n,q)$, where $\tau_1=n-\abs{\sigma}-\tau_2-\tau_2-\cdots$. In view of Corollary~\ref{cor:LVP-theorem}, it suffices to consider $((t),(n-t))$-transitive sets in $\GL(n,q)$. For brevity, we shall call such a set a \emph{$t$-design} in $\GL(n,q)$. These objects will be studied in more detail in Section~\ref{sec:designs_codes_polynomials}.
\par
We give a recursive construction of $t$-designs in $\GL(n,q)$ using $t$-designs in the Grassmannian $J_q(n,k)$, namely the collection of all $k$-spaces of $\FF_q^n$. A \emph{$t$-design} in $J_q(n,k)$ is a subset $D$ of $J_q(n,k)$ such that the number of elements in $D$ containing a given $t$-space of $\FF_q^n$ is independent of the particular choice of this $t$-space. Our construction can be understood as a $q$-analog of the construction given in~\cite[Section~6]{MarSag2007} for the symmetric group $\Sym(n)$.
\par
Let $V=\FF_q^n$ and, for a $k$-space $U$ of $V$, let $\GL(U)$ be the general linear group of~$U$, which is of course isomorphic to $\GL(k,q)$. Fix a $k$-space $U$ of~$V$ and an $(n-k)$-space~$W$ of $V$ such that
\[
V=U\oplus W.
\]
For our recursive construction, we need three ingredients: a $t$-design $Y$ in $\GL(U)$, a $t$-design $Z$ in $\GL(W)$, and a $t$-design $D$ in $J_q(n,k)$. For each $B\in D$, there are $q^{k(n-k)}$ complementary spaces, namely $(n-k)$-spaces~$C$ with $V=B\oplus C$. We denote the collection of such spaces by $C_B$. For each~$B\in D$, we fix an isomorphism $g_B:U\to B$ and, for each $B\in D$ and each $C\in C_B$, we fix an isomorphism $h_{B,C}:W\to C$.
\par
Note that, given a pair $(B,C)$ with $B\in D$ and $C\in C_B$, then every pair of isomorphisms $(y,z)$ on $B$ and $C$ can be uniquely extended to an isomorphism on $V$ by linearity. We denote this extension by $(y,z)$. Hence, if $v\in V$, then there are unique $b\in B$ and $c\in C$ with $v=b+c$ and we have
\[
(y,z)(v)=y(b)+z(c).
\]
The following lemma contains a recursive construction of $t$-designs in $\GL(n,q)$.
\begin{lemma}
\label{lem:recursive_construction}
Let $Y$ be a $t$-design in $\GL(U)$, let $Z$ be a $t$-design in $\GL(W)$, and let~$D$ be a $t$-design in $J_q(n,k)$. Then the set
\begin{equation}
\{(g_B\circ y,h_{B,C}\circ z):y\in Y,z\in Z,B\in D,C\in C_B\}   \label{eqn:recursive_design}
\end{equation}
is a $t$-design in $\GL(V)$.
\end{lemma}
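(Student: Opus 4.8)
The goal is to prove that the set $X$ in~\eqref{eqn:recursive_design} is a $t$-design in $\GL(V)$, i.e.\ that it is transitive on the linearly independent $t$-tuples of $V=\FF_q^n$. For two such tuples $\underline v=(v_1,\dots,v_t)$ and $\underline w=(w_1,\dots,w_t)$, put $N(\underline v,\underline w)=\#\{g\in X:gv_i=w_i\text{ for }1\le i\le t\}$; we must show that $N(\underline v,\underline w)$ is the same for every pair. The plan is to establish the \emph{a priori} weaker statement that, for each fixed $\underline v$, the number $N(\underline v,\underline w)$ is independent of $\underline w$. Granting this, summing over all $\underline w$ gives $N(\underline v,\underline w)\cdot M=|X|$, where $M$ denotes the number of linearly independent $t$-tuples of $V$; hence the common value equals $|X|/M$, which is independent of $\underline v$ as well, and $X$ is transitive. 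Note that the independence from $\underline w$ must be verified for \emph{every} $\underline v$ separately: knowing it for a single $\underline v$ would not suffice.

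To compute $N(\underline v,\underline w)$ I would fix $\underline v,\underline w$, set $S=\langle w_1,\dots,w_t\rangle$, and observe that the data $(B,C,y,z)$ are recovered from the automorphism $g=(g_B\circ y,h_{B,C}\circ z)$ via $B=g(U)$, $C=g(W)$, $y=g_B^{-1}\circ g|_U$ and $z=h_{B,C}^{-1}\circ g|_W$; consequently $N(\underline v,\underline w)=\sum_{B\in D}\sum_{C\in C_B}n_{B,C}$, where $n_{B,C}$ counts the pairs $(y,z)\in Y\times Z$ with $(g_B\circ y,h_{B,C}\circ z)v_i=w_i$ for all $i$. Writing $v_i=u_i+w_i'$ along $V=U\oplus W$ and $w_i=\beta_i+\gamma_i$ along $V=B\oplus C$, this condition is equivalent to the two independent systems $y(u_i)=g_B^{-1}(\beta_i)$ and $z(w_i')=h_{B,C}^{-1}(\gamma_i)$ $(1\le i\le t)$, so $n_{B,C}$ is the product of a $Y$-count and a $Z$-count. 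By Corollary~\ref{cor:LVP-theorem}, $Y$ is also a $d$-design in $\GL(U)$ for every $d\le t$, and likewise $Z$ in $\GL(W)$; hence each of these two counts vanishes unless the target tuple has the same linear-dependence pattern as the corresponding fixed source tuple ($(u_i)$ for the $Y$-count, $(w_i')$ for the $Z$-count), in which case it equals a positive constant depending only on that pattern.

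To see that the result does not depend on $\underline w$, I would record the position of $\underline v$ relative to $V=U\oplus W$ by the subspaces $R_U=\{c\in\FF_q^t:\sum_i c_i\,\pi_U(v_i)=0\}$ and $R_W=\{c\in\FF_q^t:\sum_i c_i\,\pi_W(v_i)=0\}$ of $\FF_q^t$, where $\pi_U$ and $\pi_W$ are the projections attached to the decomposition; these satisfy $R_U\cap R_W=0$ because $\underline v$ is independent, and they encode precisely the dependence patterns occurring above. Transporting $\FF_q^t$ onto $S$ by means of the basis $(w_i)$, a direct computation turns the two nonvanishing conditions into: the $Y$-count is nonzero iff $S\cap C$ equals the image $R_U^S$ of $R_U$ in $S$, and the $Z$-count is nonzero iff $S\cap B$ equals the image $R_W^S$ of $R_W$ in $S$; and when both hold, $n_{B,C}$ equals a constant $\rho$ depending only on $(\dim R_U,\dim R_W)$. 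Thus
\[
N(\underline v,\underline w)=\rho\cdot\#\bigl\{(B,C):B\in D,\ C\in C_B,\ S\cap B=R_W^S,\ S\cap C=R_U^S\bigr\}.
\]
Because $D$ is a $t$-design and $\dim S=t$, a Möbius inversion over the interval $[A,S]$ of the subspace lattice shows that $\#\{B\in D:B\cap S=A\}$ depends only on $\dim A$ for every $A\le S$; combining this with the elementary count of the complements $C$ of a fixed $B$ that meet $S$ in a prescribed subspace, one obtains that the cardinality on the right depends only on $(\dim R_U,\dim R_W)$, hence not on $\underline w$. By the reduction of the first paragraph, this proves the lemma.

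The main obstacle is the bookkeeping when $\underline v$ is in general position relative to $U\oplus W$, i.e.\ when $(\pi_U(v_i))_i$ and $(\pi_W(v_i))_i$ fail to be linearly independent: one must single out the correct invariants $R_U,R_W$, verify that the two matching-pattern conditions assemble into the clean intersection conditions on $B$ and $C$ displayed above, and establish the constancy of the intersection numbers of the subspace design $D$. The special cases in which $\underline v$ lies entirely inside $U$ or entirely inside $W$ are considerably simpler — only one of the two blocks is then constrained and the design property of $D$ is used directly — and form a convenient warm-up, but they do not by themselves yield the general statement.
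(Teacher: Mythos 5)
Your proof is correct, and it takes a genuinely different route from the paper's. The paper also sums over the choices of $(B,C)$ and then invokes the design properties of $Y$, $Z$ and $D$, but it stratifies by the number $i$ of source vectors lying in $U$, evaluates the number of elements of \eqref{eqn:recursive_design} carrying one tuple to the other explicitly as $m_{i,j}\,r_i\,s_j\,q^{k(n-k-j)}$ using the intersection numbers of Lemma~\ref{lem:intersection_numbers}, and finishes by checking through explicit recurrences that this product is independent of $i$; as written, its intermediate steps (counting the $C\in C_B$ containing $v'_{i+1},\dots,v'_t$, and taking the $z$-count to be the constant $s_j$) are tailored to source tuples in split position relative to $U\oplus W$, so the general-position coupling you flag as the main obstacle is precisely what the published argument passes over quickly. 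You handle it head-on: you fix the source tuple and prove independence of the target, so the common value is recovered by averaging and never computed; you encode the position of the source by the kernels $R_U,R_W$, turning the nonvanishing of the $Y$- and $Z$-counts into the conditions $S\cap C=R_U^S$ and $S\cap B=R_W^S$; and you replace Lemma~\ref{lem:intersection_numbers} by a M\"obius inversion over $[A,S]$ plus an elementary complement count. The paper's route buys brevity and an explicit formula; yours buys a treatment valid for arbitrary relative position, at the cost of two routine verifications you should spell out in a write-up: that the number of complements $C$ of $B$ with $C\cap S$ equal to a prescribed subspace depends only on the dimension data (any two such configurations are $\GL(V)$-equivalent, using that $R_U^S\cap B=\{0\}$ is automatic from $R_U\cap R_W=\{0\}$), and that $Y$, $Z$, $D$ are $d$-designs for all $d\le t$ (Corollary~\ref{cor:LVP-theorem} for $Y,Z$, and Lemma~\ref{lem:intersection_numbers} with $j=0$ for $D$), which both the M\"obius inversion and your pattern-matching counts require.
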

\par
Note that, taking $Y=\GL(U)$, $Z=\GL(W)$, and $D=J_q(n,k)$, the set constructed in Lemma~\ref{lem:recursive_construction} equals $\GL(V)$.
\begin{example}
By~\cite{BraKerLau2005} there exists a $2$-design in $J_2(6,3)$ of cardinality $279$. Taking $Y$ and $Z$ to be isomorphic to $\GL(3,2)$ in Lemma~\ref{lem:recursive_construction}, we obtain a $2$-design in $\GL(6,2)$ of cardinality $\tfrac{1}{5}\abs{\GL(6,2)}$. 
\end{example}
\par
To prove Lemma~\ref{lem:recursive_construction}, we shall need the following well known result about designs in $J_q(n,k)$, in which
\[
{n\brack k}_q=\frac{[n]_q!}{[k]_q!\,[n-k]_q!}
\]
is the \emph{$q$-binomial coefficient} counting the number of $k$-spaces of $\FF_q^n$.
\begin{lemma}[{\cite[Lemma 2.1]{Suz1990},~\cite[Fact~1.5]{KiePav2015}}]
\label{lem:intersection_numbers}
Let $D$ be a $t$-design in $J_q(n,k)$ and let $i,j$ be nonnegative integers satisfying $i+j\le t$. Let $I$ be an $i$-space of $V$ and let~$J$ be a $j$-space of $V$ such that $I\cap J=\{0\}$. Then the number
\[
m_{i,j}=\abs{\{B\in D:I\le B\wedge B\cap J=\{0\}\}}
\]
is independent of the particular choice of $I$ and $J$ and given by
\[
m_{i,j}=\abs{D}\,q^{j(k-i)}\frac{{n-i-j\brack k-i}_q{k\brack t}_q}{{n-t\brack k-t}_q{n\brack t}_q}.
\]
\end{lemma}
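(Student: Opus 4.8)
The plan is to derive an explicit closed formula for $m_{i,j}$ directly from the defining property of $D$; this will simultaneously give the independence of $m_{i,j}$ from the choice of $I$ and $J$ and its stated value. The first ingredient is the standard fact that a $t$-design in $J_q(n,k)$ is also an $s$-design for every $0\le s\le t$. Indeed, for a fixed $s$-space $S$, double counting the chains $S\le T\le B$ over $t$-spaces $T$ and blocks $B\in D$ gives ${n-s\brack t-s}_q\lambda_t={k-s\brack t-s}_q\cdot\abs{\{B\in D:S\le B\}}$, so $\lambda_s:=\abs{\{B\in D:S\le B\}}$ depends only on $s$ and
\[
\lambda_s=\abs{D}\prod_{\ell=0}^{s-1}\frac{q^{k-\ell}-1}{q^{n-\ell}-1}=\abs{D}\,\frac{[k]_q!\,[n-s]_q!}{[n]_q!\,[k-s]_q!}.
\]

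Next, fix $I,J$ with $I\cap J=\{0\}$ and $i+j\le t$. For $B\in D$ with $I\le B$, the M\"obius function $\mu$ of the lattice of subspaces of $\FF_q^n$ satisfies: $\sum_{\{0\}\le S\le B\cap J}\mu(\{0\},S)$ equals $1$ if $B\cap J=\{0\}$ and $0$ otherwise, where $\mu(\{0\},S)=(-1)^{\dim S}q^{\binom{\dim S}{2}}$. Summing this over all $B\in D$ with $I\le B$ and interchanging the order of summation, a subspace $S\le J$ contributes $\mu(\{0\},S)\cdot\abs{\{B\in D:I+S\le B\}}$; since $I\cap S=\{0\}$ we have $\dim(I+S)=i+\dim S$, so this count equals $\lambda_{i+\dim S}$ (well defined as $i+\dim S\le i+j\le t$). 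Grouping by $\dim S=d$ then yields
\[
m_{i,j}=\sum_{d=0}^{j}(-1)^d q^{\binom{d}{2}}{j\brack d}_q\,\lambda_{i+d},
\]
which in particular shows that $m_{i,j}$ is independent of the choice of $I$ and $J$.

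It remains to substitute the closed form for $\lambda_{i+d}$ and simplify. After factoring out $\abs{D}\,[k]_q!\,[n-k]_q!/[n]_q!$ and writing $N=n-i$, $K=k-i$, the claim reduces to the $q$-analog of Vandermonde's identity
\[
\sum_{d=0}^{j}(-1)^d q^{\binom{d}{2}}{j\brack d}_q{N-d\brack K-d}_q=q^{jK}{N-j\brack K}_q,
\]
which follows from the $q$-binomial theorem $\sum_{d}q^{\binom{d}{2}}{j\brack d}_q x^d=\prod_{\ell=0}^{j-1}(1+q^{\ell}x)$ together with the generating function for the $q$-binomial coefficients; the factor $q^{j(k-i)}$ appearing in the lemma is precisely the $q$-shift produced by this identity. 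Back-substituting, together with the elementary identity ${k\brack t}_q\big/\big({n-t\brack k-t}_q\,{n\brack t}_q\big)=[k]_q!\,[n-k]_q!/[n]_q!$, gives the asserted value of $m_{i,j}$. The only step that is not routine bookkeeping with flags and the M\"obius function of the subspace lattice is the evaluation of the alternating $q$-binomial sum in this final paragraph, so that is where I expect the main (though still standard) work to lie.
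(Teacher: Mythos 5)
Your proof is correct, and in fact the paper does not prove this lemma at all: it imports it verbatim from \cite[Lemma 2.1]{Suz1990} and \cite[Fact~1.5]{KiePav2015}. Your argument is a valid self-contained derivation along the standard lines of those references: reduce to the numbers $\lambda_s$ via the usual double count (so $D$ is an $s$-design for all $s\le t$), then apply M\"obius inversion over the subspaces $S\le J$ to get $m_{i,j}=\sum_{d=0}^{j}(-1)^dq^{\binom d2}{j\brack d}_q\lambda_{i+d}$, which already yields the independence from $I$ and $J$. All the bookkeeping checks out, including $\lambda_s=\abs{D}\,[k]_q!\,[n-s]_q!/([n]_q!\,[k-s]_q!)$ and the cancellation ${k\brack t}_q/\bigl({n-t\brack k-t}_q{n\brack t}_q\bigr)=[k]_q!\,[n-k]_q!/[n]_q!$. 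The only slightly hand-wavy step is your appeal to the $q$-binomial theorem for the alternating sum
\[
\sum_{d=0}^{j}(-1)^dq^{\binom d2}{j\brack d}_q{N-d\brack K-d}_q=q^{jK}{N-j\brack K}_q;
\]
this identity is true and standard, but the cleanest justification in your setting is to note that both sides count the $K$-spaces of an $N$-space meeting a fixed $j$-space trivially (the left side by exactly the M\"obius-inversion computation you already performed, applied to the full Grassmannian; the right side by counting ordered bases avoiding the $j$-space), or alternatively to prove it by induction on $j$ using the $q$-Pascal rule. With that small repair of the justification, your proof stands as a complete alternative to citing the literature.
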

\par
We are now ready to prove Lemma~\ref{lem:recursive_construction}.
\begin{proof}[Proof of Lemma~\ref{lem:recursive_construction}]
\par
Choose $t$-tuples $(v_1,v_2,\dots,v_t)$ and $(v'_1,v'_2,\dots,v'_t)$ of linearly independent vectors of $V$. Suppose that exactly $i$ of the vectors $v_1,v_2,\dots,v_t$ are in~$U$. After reordering we can assume that these are $v_1,v_2,\dots,v_i$. Then the remaining $j=t-i$ vectors $v_{i+1},v_{i+2},\dots, v_t$ are outside $U$, namely they belong to complementary spaces of $U$. 
\par
The number of elements $B\in D$ containing $v'_1,v'_2,\dots,v'_i$, but none of the vectors $v'_{i+1},v'_{i+2},\dots, v'_t$, equals the constant $m_{i,j}$ given in Lemma~\ref{lem:intersection_numbers} and, for each such~$B$, there are $q^{k(n-k-j)}$ complementary spaces $C\in C_B$ containing the remaining $j$ vectors. Fix a pair $(C,B)$ with these properties. Write $v_\ell=u_\ell+w_\ell$ with $u_\ell\in U$ and $w_\ell\in W$ for all $\ell$ and note that our assumption implies that $v_\ell=u_\ell$ for all $\ell\le i$. There is a constant~$r_i$ such that there are exactly $r_i$ elements $y\in Y$ taking $v_\ell$ to $g_B^{-1}(v'_\ell)$ for all $\ell\le i$. For each such $y\in Y$, there is a constant $s_j$ such that there are exactly $s_j$ elements $z\in Z$ taking $w_\ell$ to
\[
h_{B,C}^{-1}(v'_\ell-g_B(y(u_\ell)))
\]
for all $\ell>i$.
\par
Hence the total number of automorphisms in~\eqref{eqn:recursive_design} taking the tuple $(v_1,v_2,\dots,v_t)$ to the tuple $(v'_1,v'_2,\dots,v'_t)$ equals
\[
m_{i,j}\,r_i\,s_j\,q^{k(n-k-j)}.
\]
We have to show that this number is independent of $i$. Lemma~\ref{lem:intersection_numbers} implies that 
\[
(q^k-q^i)\,m_{i,j}=(q^n-q^{k+j-1})\,m_{i+1,j-1}
\]
and it is readily verified that
\[
r_i=(q^k-q^i)\,r_{i+1}
\]
for $i\le t-1$ and
\[
s_j=(q^{n-k}-q^j)\,s_{j+1}
\]
for $j\le t-1$. By combining these identities we find that
\[
m_{i+1,j-1}\,r_{i+1}\,s_{j-1}\,q^{k(n-k-j+1)}=m_{i,j}\,r_i\,s_j\,q^{k(n-k-j)},
\]
which completes the proof.
\end{proof}
\par
The following existence result for $t$-designs in $J_q(n,k)$ was obtained by Fazeli, Lovett, and Vardy~\cite{FLV2014}, using the probabilistic approach of Kuperberg, Lovett, and Peled~\cite{KLP2017}.
\begin{lemma}
\label{lem:existence_subspace_designs}
If $k>12(t+1)$ and $n\ge ckt$ for some universal constant $c$, then there exists a $t$-design in $J_q(n,k)$ of cardinality at most $q^{12(t+1)n}$. 
\end{lemma}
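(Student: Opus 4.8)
The plan is to follow the probabilistic framework of Kuperberg, Lovett, and Peled for constructing combinatorial designs of small support, adapted to the $q$-analog setting. First I would recast the problem as a lattice problem. Let $W$ be the inclusion matrix whose rows are indexed by the $t$-spaces of $\FF_q^n$, whose columns are indexed by the $k$-spaces, and with $W(T,B)=1$ precisely when $T\le B$. A $t$-design in $J_q(n,k)$ is then exactly a vector $x\in\ZZ_{\ge0}^{J_q(n,k)}$ with $Wx=\lambda\,\mathbf 1$ for some constant $\lambda$, the design being the multiset of $k$-spaces with multiplicities $x$. The goal is to produce such an $x$ with $\mathbf 1^\top x\le q^{12(t+1)n}$.

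Second, I would verify the two hypotheses needed to run the machine. \emph{Feasibility in the rational cone:} averaging over the action of $\GL(n,q)$, the all-ones vector $x=\mathbf 1$ satisfies $W\mathbf 1={n-t\brack k-t}_q\,\mathbf 1$, so the target direction lies in the interior of the cone spanned by the columns of $W$. \emph{Local central limit theorem:} one must show that a sum of a bounded number of uniformly random columns of $W$, viewed in the lattice $\Lambda$ generated by differences of columns, is nearly uniform on $\Lambda$ in a box around the origin. This is the analytic heart, and I would carry it out by Fourier analysis over $\Lambda$: the required bound on non-principal character sums $\sum_{B}\psi(We_B)$ reduces, after expansion, to bounds on the eigenvalues of the inclusion maps of the $q$-Johnson scheme on $J_q(n,k)$ (the $q$-analogs of the Eberlein polynomials). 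The hypothesis $n\ge ckt$ is precisely what makes these normalised eigenvalues small enough, and $k>12(t+1)$ provides enough degrees of freedom for the local limit estimate at the scale one needs.

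Third, once the local limit theorem is available, the Kuperberg--Lovett--Peled argument yields, with positive probability, a choice of $N$ random $k$-spaces whose column sum equals $\lambda_N\,\mathbf 1$ for the appropriate $\lambda_N$, with $N$ controlled by the Fourier bounds; tracking the constants through those estimates gives $N\le q^{12(t+1)n}$, hence the claimed bound on $\abs{D}$. A minor additional point is to confirm that $\Lambda$ is exactly the sublattice of $\ZZ^{J_q(n,t)}$ one expects, so that the target $\lambda_N\,\mathbf 1$ really lies in it; this is a divisibility computation via the Smith normal form of $W$, which for inclusion matrices of subspaces is classically known.

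The main obstacle is the second step: establishing the local central limit theorem on the Grassmannian with quantitatively strong enough control of the non-trivial Fourier coefficients, which forces a careful analysis of the spectrum of the $q$-Johnson scheme and is exactly where the hypotheses on $k$ and $n$ enter; the remaining steps are bookkeeping. Since this is precisely the content of \cite{FLV2014} (building on \cite{KLP2017}), in the paper I would simply cite those references rather than reproduce the argument.
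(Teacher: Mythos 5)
Your proposal is correct and matches the paper's treatment: the paper gives no proof of this lemma, but simply cites Fazeli--Lovett--Vardy \cite{FLV2014} (building on Kuperberg--Lovett--Peled \cite{KLP2017}), exactly as you conclude at the end. Your sketch of the underlying lattice/local-central-limit argument is a fair summary of those references, but none of it is reproduced in the paper itself.
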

\par
We now use the recursive construction in Lemma~\ref{lem:recursive_construction} together with Lemma \ref{lem:existence_subspace_designs} to obtain the following existence result for $t$-designs in $\GL(n,q)$.
\begin{theorem}
\label{thm:existence_designs}
Let $t$ be a positive integer and let $\e>0$. Then, for all sufficiently large $n$, there exists a $t$-design $Y$ in $\GL(n,q)$ satisfying $\abs{Y}/\abs{\GL(n,q)}<\e$.
\end{theorem}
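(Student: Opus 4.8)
The plan is to bootstrap the recursive construction of Lemma~\ref{lem:recursive_construction} against the subspace-design existence result of Lemma~\ref{lem:existence_subspace_designs}, choosing the block size $k$ carefully so that the $\GL(k,q)$-factors can be taken to be the full group while the relative density still collapses. First I would fix $t$ and $\e$, then pick $k = k(t)$ with $k > 12(t+1)$, say $k = 12(t+1)+1$. For $n$ large I write $n = mk + s$ with $0 \le s < k$, and I iterate Lemma~\ref{lem:recursive_construction}: starting from a $t$-design in $\GL(s,q)$ (for instance $\GL(s,q)$ itself, which is trivially a $t$-design), I repeatedly adjoin a block of dimension $k$, using $Y = \GL(k,q)$ for the new block, $Z$ the $t$-design built so far on the lower-dimensional complement, and $D$ a $t$-design in the Grassmannian $J_q(k + (\text{current dimension}), k)$ supplied by Lemma~\ref{lem:existence_subspace_designs}.

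The key computation is tracking how the relative density $\abs{Y}/\abs{\GL(n,q)}$ evolves. From~\eqref{eqn:recursive_design}, a single step replacing a $t$-design $Z$ in $\GL(W)$ (with $\dim W = n'$) and using all of $\GL(U)$ with $\dim U = k$ produces a set of size $\abs{\GL(k,q)} \cdot \abs{Z} \cdot \abs{D} \cdot q^{k(n'-k)}$... wait, more precisely $\abs{\GL(k,q)}\cdot\abs{Z}\cdot\abs{D}\cdot q^{k n'}$ where $n'$ is now the dimension of $W$ and the complementary-space count is $q^{k n'}$ per $B \in D$. Against $\abs{\GL(k+n',q)}$, after cancelling the parabolic factor $\abs{\GL(k,q)}\abs{\GL(n',q)} q^{k n'}$ one is left with a factor $\abs{D} / {k+n' \brack k}_q$. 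So the relative density multiplies by exactly $\abs{D}/{k+n'\brack k}_q$ at each step. Since ${k+n'\brack k}_q \ge q^{k n'/2}$ roughly (more carefully $\ge q^{k(n'-k+1)}$ or similar) while Lemma~\ref{lem:existence_subspace_designs} gives $\abs{D} \le q^{12(t+1)(k+n')}$, once $n'$ exceeds a threshold depending only on $t$ each such factor is bounded by, say, $q^{-n'/2}$, which is less than $1$ and in fact tends to $0$. Multiplying over the $m = \Theta(n/k)$ steps, the total relative density is at most a product of terms going to zero, hence is itself $<\e$ for $n$ large enough.

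The main obstacle, and where I would be most careful, is the bookkeeping at the first few steps: Lemma~\ref{lem:existence_subspace_designs} requires $n \ge ckt$, so the Grassmannian designs only exist once the ambient dimension is large enough; for the initial handful of blocks (while the running dimension is still below $ckt$) I cannot invoke it and must instead use $D = J_q(\cdot,k)$ itself, which contributes a density factor of exactly $1$ and thus does no harm — it simply means the density only begins to decay after a bounded number of initial steps. So the argument splits into a bounded ``warm-up'' phase contributing a fixed constant ($\le 1$) and a ``decay'' phase of $\Theta(n)$ steps each contributing a factor $\le q^{-1}$ (eventually $\le q^{-n'/2}$), and the product over the decay phase is at most $q^{-\Omega(n^2/k)}$, which beats any fixed $\e$. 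I would also need the elementary lower bound ${a+b \brack a}_q \ge q^{ab}$ (or at least $q^{ab - O(a^2)}$) to make the per-step estimate rigorous, plus the observation that $s < k$ is bounded so the base case $\GL(s,q)$ is a legitimate $t$-design (vacuously, as it is the whole group). None of these is deep; the only genuine content is organizing the telescoping density product and confirming it is eventually summably small.
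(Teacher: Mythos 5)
Your proposal is correct and rests on exactly the same two ingredients as the paper's proof: Lemma~\ref{lem:recursive_construction} with full general linear groups as the $\GL$-factors, and the Fazeli--Lovett--Vardy designs of Lemma~\ref{lem:existence_subspace_designs}; your per-step density factor $\abs{D}/{k+n'\brack k}_q$ is also exactly the quantity the paper computes. The difference is that you iterate, whereas the paper applies Lemma~\ref{lem:recursive_construction} only once: with $k>12(t+1)$ fixed, $Y=\GL(U)$, $Z=\GL(W)$ where $\dim U=k$, $\dim W=n-k$, and $D$ a $t$-design in $J_q(n,k)$ of size at most $q^{12(t+1)n}$, the relative density is at most $q^{12(t+1)n}/{n\brack k}_q\le q^{12(t+1)n}/q^{k(n-k)}$, which already tends to $0$ as $n\to\infty$ because $k>12(t+1)$. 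In effect, a single step of your decay phase with $Z$ taken to be the full group is the entire proof, so the telescoping over $\Theta(n/k)$ steps --- together with the warm-up bookkeeping below the $ckt$ threshold, the base case $\GL(s,q)$, and the bound ${a+b\brack a}_q\ge q^{ab}$ --- is sound but unnecessary for the stated theorem; what the iteration buys is a quantitatively smaller design, of relative size $q^{-\Omega(n^2/k)}$ instead of $q^{-(k-12(t+1))n+O_t(1)}$, which is a genuine strengthening but not needed here. If you do keep the iteration, make one point explicit: for ambient dimensions just above $ckt$ the quantity $q^{12(t+1)(k+n')-kn'}$ can still exceed $1$, so either prolong the warm-up (using $D=J_q(\cdot,k)$ itself, factor exactly $1$) until $n'$ passes the threshold of order $12(t+1)k$, or absorb the finitely many such factors into a constant depending only on $t$ and $q$.
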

\begin{proof}
Fix $k>12(t+1)$. We apply Lemma~\ref{lem:recursive_construction} with $Y=\GL(U)$ and $Z=\GL(W)$. Then from Lemma~\ref{lem:existence_subspace_designs} we obtain the existence of a $t$-design in $\GL(n,q)$ of cardinality at most
\[
N=\abs{\GL(k,q)}\cdot\abs{\GL(n-k,q)}\,q^{k(n-k)}q^{12(t+1)n},
\]
provided that $n\ge ckt$ for the constant $c$ of Lemma~\ref{lem:existence_subspace_designs}. Note that we have
\[
\frac{N}{\abs{\GL(n,q)}}=\frac{q^{12(t+1)n}}{{n\brack k}_q}<\frac{q^{12(t+1)n}}{q^{k(n-k)}}.
\]
Since $k>12(t+1)$, this number tends to zero as $n$ tends to infinity.
\end{proof}
\par
By combining Theorem~\ref{thm:existence_designs} and Corollary~\ref{cor:LVP-theorem} we obtain an existence result for general $(\sigma,\tau)$-transitive sets in $\GL(n,q)$. 
\begin{corollary}
Let $(\sigma,\tilde\tau)\in\Theta_{t,q}$ and let $\e>0$. Then for all sufficiently large $n$, there exists a $(\sigma,\tau)$-transitive set $Y$ in $\GL(n,q)$ satisfying $\abs{Y}/\abs{\GL(n,q)}<\e$, where $\tau=(n-\abs{\sigma}-\abs{\tilde\tau},\tilde\tau_1,\tilde\tau_2,\dots)$.
\end{corollary}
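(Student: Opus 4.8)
The plan is to obtain this as an immediate consequence of Theorem~\ref{thm:existence_designs} together with the monotonicity result Corollary~\ref{cor:LVP-theorem}. Put $t=\abs{\sigma}+\abs{\tilde\tau}$, which is exactly the $t$ for which $(\sigma,\tilde\tau)\in\Theta_{t,q}$, so that in particular $n-\abs{\sigma}-\abs{\tilde\tau}=n-t$ and $\tau=(n-t,\tilde\tau_1,\tilde\tau_2,\dots)$. By Theorem~\ref{thm:existence_designs}, for all sufficiently large $n$ there is a $t$-design $Y$ in $\GL(n,q)$ with $\abs{Y}/\abs{\GL(n,q)}<\e$; recall that a $t$-design in $\GL(n,q)$ is by definition a $((t),(n-t))$-transitive subset. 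Thus it suffices to verify $((t),(n-t))\preceq(\sigma,\tau)$, since then Corollary~\ref{cor:LVP-theorem} shows that the same set $Y$ is $(\sigma,\tau)$-transitive, and the bound on its size is unchanged.

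The two conditions to check are that $\sigma$ refines $(t)$ and that $(n-t)\unlhd\tau$. For the refinement: $\abs{\sigma}\le t$, and the sequence $(\sigma_1,\dots,\sigma_{\ell(\sigma)},1^{\,t-\abs{\sigma}})$ has parts summing to $t$, so partitioning the single part of $(t)$ into these pieces exhibits $\sigma$ as a refinement of $(t)$. For the dominance: the first (and only) part of $(n-t)$ is $n-t$, and $\tau_1=n-t$ as well, so $\sum_{i=1}^{k}(n-t,0,0,\dots)_i=n-t\le\tau_1\le\sum_{i=1}^{k}\tau_i$ for every $k\ge 1$, which is precisely $(n-t)\unlhd\tau$. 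Hence $((t),(n-t))\preceq(\sigma,\tau)$. One must also check that both pairs lie in the relevant $\Theta_{n,q}$: clearly $\abs{\sigma}+\abs{\tau}=n$ and $\abs{(t)}+\abs{(n-t)}=n$; the sequence $\tau=(n-t,\tilde\tau_1,\tilde\tau_2,\dots)$ is nonincreasing once $n-t\ge\tilde\tau_1$; and when $q=2$ all parts of $\tilde\tau$ exceed $1$ because $(\sigma,\tilde\tau)\in\Theta_{t,q}$, while $n-t>1$ for large $n$. All of these hold for $n$ sufficiently large, which we may absorb into the threshold already supplied by Theorem~\ref{thm:existence_designs}.

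I do not expect any genuine obstacle here: all the substantive content is in Theorem~\ref{thm:existence_designs} and Corollary~\ref{cor:LVP-theorem}, and the remaining work is just unwinding the definitions of ``refines'' and ``$\unlhd$''. The only points needing a little care are recording that the leading part of $\tau$ is exactly $n-t$ (so that the dominance inequality holds, with equality at $k=1$), and noting the mild lower bound on $n$ needed to make $\tau$ a genuine partition and $(\sigma,\tau)$ a genuine element of $\Theta_{n,q}$ — these are the only places, beyond Theorem~\ref{thm:existence_designs} itself, where ``sufficiently large $n$'' is invoked.
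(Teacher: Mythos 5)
Your proposal is correct and is exactly the paper's (implicit) argument: the paper derives this corollary by combining Theorem~\ref{thm:existence_designs} with Corollary~\ref{cor:LVP-theorem}, and your verification that $((t),(n-t))\preceq(\sigma,\tau)$ with $t=\abs{\sigma}+\abs{\tilde\tau}$, together with the check that $(\sigma,\tau)\in\Theta_{n,q}$ for large $n$, is just the routine unwinding the paper leaves to the reader.
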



\section{Designs, codes, and orthogonal polynomials}
\label{sec:designs_codes_polynomials}

Certain association schemes, namely $P$- and $Q$-polynomial association schemes, are closely related to orthogonal polynomials in the sense that their character tables arise as evaluations of such polynomials (see~\cite{BanIto1984} or~\cite{Del1973}, for example). The conjugacy class association scheme of $\GL(n,q)$ does not have these properties. Nevertheless there is still a relationship to certain orthogonal polynomials, namely the Al-Salam-Carlitz polynomials. 
\par
We shall first recall and establish some basic properties of these polynomials and then apply these results to subsets of $\GL(n,q)$.

\subsection{Al-Salam-Carlitz polynomials}

The Al-Salam-Carlitz polynomials are given by
\[
U_k^{(a)}(x)=\sum_{j=0}^k(-1)^{k-j}q^{k-j\choose 2}{k\brack j}_q\prod_{i=0}^{j-1}(x-aq^i).
\]
They were introduced in~\cite{AlSCar1965} and some properties can be found in~\cite{Chi1978} and~\cite{Kim1997}. We are only interested in the case $a=1$ and write $U_k(x)$ for $U_k^{(1)}(x)$. These polynomials satisfy the recurrence relation
\[
U_{k+1}(x)=(x-2q^k)U_k(x)+q^{k-1}(1-q^k)U_{k-1}(x)\quad\text{for $k\ge 0$}
\]
with the initial condition $U_{-1}(x)=0$ and $U_0(x)=1$. The first polynomials are
\begin{align*}
U_1(x)&=x-2\\
U_2(x)&=x^2-2(q+1)x+3q+1\\
U_3(x)&=x^3-2(q^2+q+1)x^2+(3q^3+4q^2+4q+1)x-2q(2q^2+q+1).
\end{align*}
An equivalent definition of the Al-Salam-Carlitz polynomials is 
\begin{equation}
\label{eqn:AlSalam_Carlitz_moments}
\sum_{k=0}^j{j\brack k}_qU_k(x)=\prod_{i=0}^{j-1}(x-q^i)\quad\text{for $j=0,1,\dots$}.
\end{equation}
This follows from the inversion formula
\begin{equation}
\sum_{k=j}^\ell(-1)^{k-j}q^{k-j\choose 2}{k\brack j}_q{\ell\brack k}_q=\delta_{j\ell},   \label{eqn:inversion_formula}
\end{equation}
which in turn can be obtained from the $q$-binomial theorem.
\par
The Al-Salam-Carlitz polynomials are $q$-analogs of the Charlier polynomials and are orthogonal with respect to a $q$-analog of a Poisson distribution, whose $k$-th moment is 
\begin{equation}
\label{eqn:moment_q_Poisson}
\sum_{i=0}^k{k\brack i}_q,
\end{equation}
the number of subspaces of a $k$-dimensional vector space over~$\FF_q$. Let $\theta$ denote the class function of $\GL(n,q)$ given by
\[
\theta(g)=q^{n-\rk(g-I)}
\]
for each $g\in\GL(n,q)$, where $I$ is the identity of $\GL(n,q)$. Let $w_i$ be the number of elements $g\in\GL(n,q)$ satisfying $\theta(g)=q^i$. Explicit expressions for $w_i$ were obtained by Rudvalis and Shinoda in an unpublished work~\cite{RudShi1988} and by Fulman~\cite{Ful1999}, which shows that
\begin{equation}
w_i=\frac{\abs{\GL(n,q)}}{\abs{\GL(i,q)}}\sum_{k=0}^{n-i}\frac{(-1)^kq^{k\choose 2}}{q^{ki}\,\abs{\GL(k,q)}}.   \label{eqn:wi}
\end{equation}
We shall later see that this expression also follows from our results (see Remark~\ref{rem:Rudvalis_Shinoda}).
\par
The class function $\theta$ defines a discrete random variable on $\GL(n,q)$ and it was shown in~\cite{FulSta2016} that its $k$-th moment equals~\eqref{eqn:moment_q_Poisson}, provided that $k\le n$. Hence the Al-Salam-Carlitz polynomials also satisfy the orthogonality relation
\begin{equation}
\label{eqn:orthogonality_AlSalam_Carlitz}
\sum_{i=0}^nw_i\,U_k(q^i)U_\ell(q^i)=0\quad\text{for $k\ne \ell$ and $k+\ell\le n$}.
\end{equation}
(It follows from Theorem~\ref{thm:decomposition_AlSC_chars} that,  for $k=\ell$ and $2k\le n$, the evaluation of the left-hand side is $\abs{\GL(k,q)}\cdot\abs{\GL(n,q)}$.)
\par
With every polynomial $f(x)=f_nx^n+\cdots+f_1x+f_0$ in $\RR[x]$ we associate the class function $f(\theta)=f_n\theta^n+\cdots+f_1\theta+f_0$. This induces an algebra homomorphism from~$\RR[x]$ to the set of class functions of $\GL(n,q)$. Let~$\xi_j$ be the permutation character on ordered $j$-tuples of linearly independent elements of $\FF_q^n$. By convention $\xi_0$ is the trivial character of $\GL(n,q)$. Note that
\[
\xi_j=\prod_{i=0}^{j-1}(\theta-q^i).
\]
Hence we have
\begin{equation}
U_k(\theta)=\sum_{j=0}^k(-1)^{k-j}q^{k-j\choose 2}{k\brack j}_q\xi_j\quad\text{for $k=0,1,\dots,n$}   \label{eqn:U_from_xi}
\end{equation}
and by~\eqref{eqn:AlSalam_Carlitz_moments}
\begin{equation}
\xi_j=\sum_{k=0}^j{j\brack k}_qU_k(\theta)\quad\text{for $j=0,1,\dots,n$}.   \label{eqn:xi_from_U}
\end{equation}
For $0\le k\le n/2$, we now decompose $U_k(\theta)$ into irreducible characters of~$\GL(n,q)$.
\begin{theorem}
\label{thm:decomposition_AlSC_chars}
For $0\le k\le n/2$, the decomposition of $U_k(\theta)$ into irreducible characters is 
\[
U_k(\theta)=\sum_{\unu\in\Lambda_k}f_{\unu}\,\chi^{r(\unu)},
\]
where $r(\unu)$ is the element $\ulambda\in\Lambda_n$ that agrees with $\unu$ except on $1$, where it is $\ulambda(1)=(n-k,\unu(1)_1,\unu(1)_2,\dots)$, namely $\ulambda(1)$ is obtained from $\unu(1)$ by inserting a row with $n-k$ cells in the Young diagram of $\unu(1)$.
\end{theorem}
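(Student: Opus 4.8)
The plan is to compute the multiplicity $\ang{U_k(\theta),\chi^{\ulambda}}$ for every $\ulambda\in\Lambda_n$ and to show that it vanishes unless the largest part of $\ulambda(1)$ equals $n-k$, in which case it equals $f_{\unu}$, where $\unu\in\Lambda_k$ is obtained from $\ulambda$ by deleting the first row of $\ulambda(1)$. Since $k\le n/2$, this deletion is a bijection from $\{\ulambda\in\Lambda_n:\ulambda(1)_1=n-k\}$ onto $\Lambda_k$ whose inverse is $\unu\mapsto r(\unu)$, so this suffices.

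First I would record a decomposition of the permutation character $\xi_j$. The stabiliser of an ordered $j$-tuple of linearly independent vectors is the kernel of $\pi_1\colon P_{(j,n-j)}\to\GL(j,q)$; inducing its trivial character first to $P_{(j,n-j)}$, where it becomes the regular character of $\GL(j,q)$ pulled back along $\pi_1$, and then to $\GL(n,q)$ --- exactly as in the proof of Proposition~\ref{pro:decomposition_perm_char} --- gives
\[
\xi_j=\sum_{\ukappa\in\Lambda_j}f_{\ukappa}\,\bigl(\chi^{\ukappa}\cdot\chi^{X-1\mapsto(n-j)}\bigr).
\]
By Lemma~\ref{lem:LLR_general} and the Pieri rule (the case of the Littlewood--Richardson rule with one factor a single row), $\chi^{\ukappa}\cdot\chi^{X-1\mapsto(n-j)}$ is the sum of the $\chi^{\ulambda}$ over those $\ulambda\in\Lambda_n$ that agree with $\ukappa$ away from $X-1$ and for which $\ulambda(1)/\ukappa(1)$ is a horizontal strip of size $n-j$. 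Fix $\ulambda\in\Lambda_n$, put $\sigma=\ulambda(1)$, let $\ulambda_0$ be the restriction of $\ulambda$ to $\Phi\setminus\{X-1\}$, and set $p=\norm{\ulambda_0}$ and $M=k-p$. Combining the above with~\eqref{eqn:U_from_xi} yields
\[
\ang{U_k(\theta),\chi^{\ulambda}}=\sum_{j=0}^{k}(-1)^{k-j}q^{\binom{k-j}{2}}{k\brack j}_q\sum_{\substack{\rho\subseteq\sigma,\ \abs{\rho}=j-p\\ \sigma/\rho\ \text{a horizontal strip}}}f_{\ukappa(\rho)},
\]
where $\ukappa(\rho)\in\Lambda_j$ agrees with $\ulambda_0$ away from $X-1$ and has $(X-1)$-part $\rho$; if $M<0$ there is nothing, and then $\ulambda$ is not in the image of $r$ either, so assume $M\ge0$.

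Next I would substitute degree formulas. Since $\chi^{\ukappa(\rho)}=\chi^{\ulambda_0}\cdot\chi^{X-1\mapsto\rho}$ is a product of characters with disjoint prime supports, its degree is the index ${j\brack p}_q$ of $P_{(p,j-p)}$ in $\GL(j,q)$ times the product of the factor degrees, and the classical unipotent degree formula (in the present convention, where $\chi^{X-1\mapsto(n)}$ is trivial) gives $f_{X-1\mapsto\rho}=q^{n(\rho)}\psi_{\abs{\rho}}(q)/H_{\rho}(q)$, with $\psi_m(q)=\prod_{i=1}^m(q^i-1)$, $H_{\rho}(q)=\prod_{s\in\rho}(q^{h(s)}-1)$ and $n(\rho)=\sum_i(i-1)\rho_i$. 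Using ${k\brack j}_q\psi_j(q)=\psi_k(q)/\psi_{k-j}(q)$, the identity $\psi_a(q)=(-1)^a(q;q)_a$, the principal specialisation $s_{\rho}(1,q,q^2,\dots)=(-1)^{\abs{\rho}}q^{n(\rho)}/H_{\rho}(q)$, and the $q$-binomial theorem $\sum_{b\ge0}(-1)^b q^{\binom b2}z^b/(q;q)_b=\prod_{i\ge0}(1-zq^i)$, the whole expression collapses to
\[
\ang{U_k(\theta),\chi^{\ulambda}}=(-1)^M\,\frac{f_{\ulambda_0}\,\psi_k(q)}{\psi_p(q)}\,[z^M]\Biggl(\Bigl(\prod_{i\ge0}(1-zq^i)\Bigr)\sum_{\rho:\ \sigma/\rho\ \text{hor.\ strip}}z^{\abs{\rho}}s_{\rho}(1,q,q^2,\dots)\Biggr),
\]
an identity in the ring of symmetric functions specialised at $1,q,q^2,\dots$ (or at $1,q,\dots,q^{N-1}$ with $N\gg0$, where all quantities are polynomials).

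The final step is to evaluate this coefficient, and this is where I expect the real work. Expanding $\prod_{i\ge0}(1-zq^i)=\sum_b(-z)^b e_b(1,q,q^2,\dots)$ and applying the Pieri rule for $e_b$, the bracket becomes $\sum_\nu\bigl(\sum_\rho(-1)^{\abs\nu-\abs\rho}\bigr)z^{\abs\nu}s_\nu(1,q,q^2,\dots)$, the inner sum over $\rho$ with $\sigma/\rho$ a horizontal strip and $\nu/\rho$ a vertical strip. Such $\rho$ are precisely those with $\rho_i\in[\sigma_{i+1},\sigma_i]\cap\{\nu_i-1,\nu_i\}$ for every $i$, with no further restriction (the partition inequalities $\rho_i\ge\sigma_{i+1}\ge\rho_{i+1}$ are automatic), so $\sum_\rho(-1)^{\abs\rho}=\prod_i\bigl(\sum_{\rho_i}(-1)^{\rho_i}\bigr)$; a quick case check shows the $i$-th factor is $0$ unless $\nu_i\in\{\sigma_{i+1},\sigma_i+1\}$, in which case the admissible $\rho_i$ is unique. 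A partition $\nu$ with $\nu_i\in\{\sigma_{i+1},\sigma_i+1\}$ for all $i$ is either $\hat\sigma:=(\sigma_2,\sigma_3,\dots)$ (every choice ``low'', giving $\abs{\hat\sigma}=\abs\sigma-\sigma_1$ and unique $\rho=\hat\sigma$) or has $\nu_1=\sigma_1+1$, forcing $\abs\nu\ge\abs\sigma-\sigma_2+1$. Because $k\le n/2$ and $\abs\sigma\le n$, whenever $\sigma_1>n-k$ one has $\sigma_2\le\abs\sigma-\sigma_1<k\le n-k$, hence $\abs\sigma-\sigma_1<M=\abs\sigma-(n-k)<\abs\sigma-\sigma_2+1$: no admissible $\nu$ has $\abs\nu=M$, so the multiplicity is $0$. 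Likewise $M<\abs\sigma-\sigma_1$ when $\sigma_1<n-k$, again giving $0$. And when $\sigma_1=n-k$ we get $M=\abs\sigma-\sigma_1$, only $\nu=\hat\sigma$ survives, the coefficient of $z^M$ equals $s_{\hat\sigma}(1,q,q^2,\dots)$, and a final application of the same degree formulas identifies $(-1)^M f_{\ulambda_0}\psi_k(q)\,s_{\hat\sigma}(1,q,q^2,\dots)/\psi_p(q)$ with $f_{\unu}$ for $\unu=r^{-1}(\ulambda)$. This proves the theorem. The main obstacle is thus this concluding sign-reversing evaluation together with the elementary estimate $\sigma_2<n-k$ that excludes every ``high'' $\nu$; the earlier steps are routine manipulations of standard character-degree and symmetric-function identities.
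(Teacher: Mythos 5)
Your proof is correct, and it takes a genuinely different route from the paper's. The paper never computes multiplicities $\ang{U_k(\theta),\chi^{\ulambda}}$ directly: it first uses the orthogonality relation~\eqref{eqn:orthogonality_AlSalam_Carlitz} --- which rests on the external moment computation of \cite{FulSta2016} --- to conclude from~\eqref{eqn:xi_from_U} that $U_k(\theta)$ consists precisely of the constituents of $\xi_k$ that do not occur in $\xi_{k-1}$, and then a single application of Pieri's rule to $\xi_k=\sum_{\unu\in\Lambda_k}f_{\unu}\,(\chi^{\unu}\cdot 1_{\GL(n-k,q)})$ identifies these new constituents as $\chi^{r(\unu)}$ with multiplicity $f_{\unu}$. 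You share the decomposition of $\xi_j$ and the Pieri step, but you replace the orthogonality/moment input by a direct evaluation of each $\ang{U_k(\theta),\chi^{\ulambda}}$ using Green's hook-type degree formula for unipotent characters, the principal specialisation of Schur functions, the $q$-binomial theorem, the dual Pieri rule, and a sign-reversing cancellation over horizontal/vertical strips. I checked the collapse to your coefficient-of-$z^M$ formula, the strip analysis (each factor vanishes unless $\nu_i\in\{\sigma_{i+1},\sigma_i+1\}$), the case distinctions --- where the hypothesis $k\le n/2$ enters exactly through $\sigma_2\le\abs{\sigma}-\sigma_1<k\le n-k$ --- and the final identification of $(-1)^M f_{\ulambda_0}\psi_k(q)s_{\hat\sigma}(1,q,\dots)/\psi_p(q)$ with $f_{\unu}$; all of these are correct. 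The trade-off: the paper's argument is shorter and needs no explicit degree formulas, but it imports the Fulman--Stanton moment result; yours is heavier and computational but self-contained at that point, and as a byproduct it re-proves the orthogonality of the $U_k(\theta)$ for $k\le n/2$ rather than assuming it. One point of hygiene: since the numerical value of $q$ exceeds $1$, the series $s_{\rho}(1,q,q^2,\dots)$ does not converge, and your parenthetical fallback is not quite right either, because the finite specialisation $s_{\rho}(1,q,\dots,q^{N-1})$ equals $q^{n(\rho)}\prod_{s\in\rho}(1-q^{N+c(s)})/(1-q^{h(s)})$ and so differs from $(-1)^{\abs{\rho}}q^{n(\rho)}/H_{\rho}(q)$; the clean fix is to run the symmetric-function manipulations with $q$ as an indeterminate (formal power series or rational functions in $q$) and evaluate at the prime power only at the end --- a cosmetic repair, not a gap.
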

\begin{proof}
Recall that the standard scalar product on class functions $\phi$ and $\psi$ of $\GL(n,q)$ is given by
\[
\ang{\phi,\psi}=\frac{1}{\abs{\GL(n,q)}}\sum_{g\in\GL(n,q)}\phi(g)\overline{\psi(g)}.
\]
It follows from the orthogonality relation~\eqref{eqn:orthogonality_AlSalam_Carlitz} that
\[
\ang{U_k(\theta),U_\ell(\theta)}=0\quad\text{for $0\le k<\ell\le n/2$}.
\]
Since the irreducible characters of $\GL(n,q)$ form an orthonormal basis for the space of class functions of $\GL(n,q)$, we find from~\eqref{eqn:xi_from_U} that $U_k(\theta)$ decomposes into those irreducible characters that occur in the decomposition of $\xi_k$, but not in the decomposition of $\xi_{k-1}$.
\par
As in the proof of Proposition~\ref{pro:decomposition_perm_char} we have
\[
\xi_k=\sum_{\unu\in\Lambda_k}f_{\unu}\;(\chi^{\unu}\cdot 1_{\GL(n-k,q)}),
\]
where $1_{\GL(n-k,q)}$ is the trivial character of $\GL(n-k,q)$. Note that the Littlewood-Richardson coefficient $c_{\nu,(n-k)}^\mu$ is either $0$ or $1$ and it equals $1$ precisely when the Young diagram of $\mu$ is obtained from that of $\nu$ by adding $n-k$ cells no two of which are in the same column (this special case is also known as Pieri's rule). Hence by Lemma~\ref{lem:LLR_general} the character $\chi^{\unu}\cdot 1_{\GL(n-k,q)}$ decomposes into those irreducible characters $\chi^{\ulambda}$ for which $\ulambda$ agrees with $\unu$ except on~$1$ and~$\ulambda(1)$ is obtained from $\unu(1)$ by adding $n-k$ cells to the Young diagram of $\unu(1)$ no two of which in the same column. Hence the irreducible characters occuring in the decomposition of $\xi_k$ but not in the decomposition of $\xi_{k-1}$ are precisely $\chi^{r(\unu)}$ with multiplicity $f_{\unu}$, where $\unu\in\Lambda_k$.
\end{proof}
\par
By combining Theorem~\ref{thm:decomposition_AlSC_chars} and~\eqref{eqn:xi_from_U}, we obtain the decomposition into irreducible characters of $\xi_j$ for $0\le j\le n/2$. This result strengthens Proposition~\ref{pro:decomposition_perm_char} for $(\sigma,\tau)=((t),(n-t))$ and $t\le n/2$.
\begin{corollary}
For $0\le j\le n/2$ the decomposition of $\xi_j$ into irreducible characters is 
\[
\xi_j=\sum_{k=0}^j{j\brack k}_q\,\sum_{\unu\in\Lambda_k}f_{\unu}\,\chi^{r(\unu)},
\]
where $r(\unu)$ is as in Theorem~\ref{thm:decomposition_AlSC_chars}.
\end{corollary}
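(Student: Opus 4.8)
The plan is to read the statement off directly from the two displays that immediately precede it. By~\eqref{eqn:xi_from_U} we have $\xi_j=\sum_{k=0}^j{j\brack k}_q\,U_k(\theta)$, valid for all $0\le j\le n$. When $0\le j\le n/2$, every summation index $k$ occurring here satisfies $0\le k\le j\le n/2$, so Theorem~\ref{thm:decomposition_AlSC_chars} applies to each term $U_k(\theta)$ and supplies its decomposition $U_k(\theta)=\sum_{\unu\in\Lambda_k}f_{\unu}\,\chi^{r(\unu)}$ into irreducible characters. Substituting this into the formula for $\xi_j$ gives exactly $\xi_j=\sum_{k=0}^j{j\brack k}_q\sum_{\unu\in\Lambda_k}f_{\unu}\,\chi^{r(\unu)}$, which is the claim.

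There is essentially no obstacle: all the substantive work was carried out in the proof of Theorem~\ref{thm:decomposition_AlSC_chars}, and what remains is the substitution together with the remark that the hypothesis $0\le j\le n/2$ keeps every index $k$ inside the range in which that theorem is valid. If one wishes to stress that the right-hand side is literally the decomposition of $\xi_j$ into irreducibles, and not merely an identity of class functions, one can add the elementary observation that the characters appearing are pairwise distinct: the map $\unu\mapsto r(\unu)$ is injective on each $\Lambda_k$, and since $\norm{\unu}=k\le n/2$ forces $\abs{\unu(1)}\le k\le n-k$, the part $n-k$ is the largest part of $r(\unu)(1)$, so $\unu$'s coming from different values of $k$ yield different characters $\chi^{r(\unu)}$. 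Hence no merging of coefficients occurs. This refinement is optional, as the statement as phrased only asserts the class-function identity.
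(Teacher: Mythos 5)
Your proposal is correct and matches the paper's own derivation: the paper obtains the corollary precisely by substituting the decomposition of $U_k(\theta)$ from Theorem~\ref{thm:decomposition_AlSC_chars} into~\eqref{eqn:xi_from_U}, noting that $j\le n/2$ keeps every index $k$ within the theorem's range. Your optional remark on the injectivity of $\unu\mapsto r(\unu)$ (across all $k\le n/2$) is a correct extra observation, not needed for the statement as phrased.
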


\subsection{Designs and codes}

Henceforth we call a $((t),(n-t))$-transitive subset of $\GL(n,q)$ a $t$-\emph{design}. Thus a $t$-design in $\GL(n,q)$ is transitive on the set of $t$-tuples of linearly independent elements of $\FF_q^n$. We also call an $((n-d+1),(d-1))$-clique a \emph{$d$-code}. Hence, for all distinct elements $x,y$ of a $d$-code, there is no $(n-d+1)$-tuple of linearly independent elements of $\FF_q^n$ fixed by $x^{-1}y$. This implies that $\rk(x-y)\ge d$ for all distinct $x,y$ in a $d$-code. 
\par
Theorems~\ref{thm:designs_dual_dist} and~\ref{thm:cliques_inner_dist} specialise in these cases as follows.
\begin{corollary}
\label{cor:designs_codes_zero_distributions}
Let $Y$ be a subset of $\GL(n,q)$ with inner distribution $(a_{\umu})$ and dual distribution $(a'_{\ulambda})$. Then $Y$ is a $t$-design if and only if
\[
a'_{\ulambda}=0\quad\text{for each $\ulambda\in\Lambda_n$ satisfying $n-t\le \ulambda(1)_1<n$}
\]
and a $d$-code if and only if
\[
a_{\umu}=0\quad\text{for each $\umu\in\Lambda_n$ satisfying $n-d+1\le\umu(1)'_1<n$}
\]
\end{corollary}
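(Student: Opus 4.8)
The plan is to specialize the two general characterizations already established, namely Theorem~\ref{thm:designs_dual_dist} (via Corollary~\ref{cor:designs_dual_dist}) for designs and Theorem~\ref{thm:cliques_inner_dist} for cliques, to the specific types $(\sigma,\tau)=((t),(n-t))$ and $(\sigma,\tau)=((n-d+1),(d-1))$. The whole proof is essentially an exercise in unwinding the partial order $\preceq$ on pairs of partitions in these two degenerate cases, so the real content is a careful bookkeeping of what $\type(\alpha)\preceq\type(\ulambda)$ means when $\sigma$ has a single part.

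For the design part, I would argue as follows. By Corollary~\ref{cor:designs_dual_dist}, $Y$ is a $t$-design, i.e.\ $((t),(n-t))$-transitive, if and only if $a'_{\ulambda}=0$ for all $\ulambda\in\Lambda_n$ with $((t),(n-t))\preceq\type(\ulambda)\prec(\varnothing,(n))$. Writing $\type(\ulambda)=(\kappa,\lambda)$ with $\lambda=\ulambda(1)$, the condition $((t),(n-t))\preceq(\kappa,\lambda)$ means that $\kappa$ refines $(t)$ and $(n-t)\unlhd\lambda$. Since $\kappa$ refines the single part $(t)$ automatically whenever $\abs{\kappa}\le t$, and here $\abs{\kappa}=n-\abs{\lambda}$, this first condition is equivalent to $\abs{\lambda}\ge n-t$. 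On the other hand, for a partition $\lambda$, the dominance condition $(n-t)\unlhd\lambda$ just says $\lambda_1\ge n-t$ (the first partial sum inequality is the only nontrivial one, as $\abs{\lambda}\ge n-t=\abs{(n-t)}$ is forced and then all later partial sums of $(n-t)$ are constant). But $\lambda_1\ge n-t$ already forces $\abs{\lambda}\ge n-t$, so the two conditions collapse to the single inequality $\ulambda(1)_1\ge n-t$. Finally $\type(\ulambda)\prec(\varnothing,(n))$ excludes the trivial character, which is exactly $\ulambda(1)=(n)$, i.e.\ $\ulambda(1)_1=n$; combined with $\ulambda(1)_1\ge n-t$ this gives $n-t\le\ulambda(1)_1<n$, as claimed.

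For the code part, I would apply Theorem~\ref{thm:cliques_inner_dist} with $(\sigma,\tau)=((n-d+1),(d-1))$: then $Y$ is a $d$-code if and only if $a_{\umu}=0$ for all $\umu\in\Lambda_n$ satisfying $(\tau,\sigma)\preceq\type(\umu')\prec(\varnothing,(n))$, where now $(\tau,\sigma)=((d-1),(n-d+1))$. Writing $\type(\umu')=(\nu,\mu)$, and recalling from the discussion preceding Theorem~\ref{thm:cliques_inner_dist} that $\type(\umu)=(\nu,\mu')$ so that $\mu=\umu(1)'$, the inequality $((d-1),(n-d+1))\preceq(\nu,\mu)$ unwinds just as before: $\nu$ refines $(d-1)$ (automatic once $\abs{\nu}\le d-1$) together with $(n-d+1)\unlhd\mu$, i.e.\ $\mu_1\ge n-d+1$. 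Again $\mu_1\ge n-d+1$ forces $\abs{\mu}\ge n-d+1$, hence $\abs{\nu}=n-\abs{\mu}\le d-1$, so both conditions reduce to $\mu_1=\umu(1)'_1\ge n-d+1$. Excluding the trivial conjugacy-class-type $(\varnothing,(n))$, i.e.\ $\umu(1)'=(n)$, which means $\umu(1)'_1=n$, leaves $n-d+1\le\umu(1)'_1<n$, completing the proof.

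The only point requiring any care is the claim that the refinement condition on $\kappa$ (resp.\ $\nu$) is subsumed by the dominance condition; this hinges on the elementary observation that a partition with $\abs{\kappa}\le t$ always refines the one-part partition $(t)$, since one may append $t-\abs{\kappa}$ parts equal to $1$ and then group all parts into a single block of size $t$. I would state this explicitly as a one-line remark. Everything else is a direct translation of the general theorems, so I expect no genuine obstacle; the main risk is simply an off-by-one slip in tracking the strict inequality coming from $\prec(\varnothing,(n))$ and in the passage between a partition and its conjugate in the code case, which I would double-check against the small examples $U_1,U_2,U_3$ and the definition of $\type$ given in the text.
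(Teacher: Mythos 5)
Your unwinding is correct and is exactly what the paper intends: it states this corollary as a direct specialisation of Theorem~\ref{thm:designs_dual_dist} (via Corollary~\ref{cor:designs_dual_dist}) and Theorem~\ref{thm:cliques_inner_dist} without further proof, and your bookkeeping of the order $\preceq$ (refinement of a one-part partition being automatic, dominance by a one-part partition reducing to $\lambda_1\ge n-t$, and the conjugate-type identity $\type(\umu')=(\nu,\umu(1)')$) is accurate, including the exclusion of the type $(\varnothing,(n))$ giving the strict upper bound.
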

\par
Note that the mapping $(x,y)\mapsto \rk(x-y)$ is a metric on $\GL(n,q)$. Accordingly, for a subset $Y$ of $\GL(n,q)$, we define the \emph{distance distribution} to be the tuple $(A_i)_{0\le i\le n}$, where
\[
A_i=\frac{1}{\abs{Y}}\bigabs{\{(x,y)\in Y\times Y:\rk(x-y)=i\}}
\]
and the \emph{dual distance distribution} to be the tuple $(A'_k)_{0\le k\le n}$, where
\[
A'_k=\sum_{i=0}^nU_k(q^{n-i})A_i.
\]
Note that
\begin{equation}
A'_k=\frac{1}{\abs{Y}}\sum_{x,y\in Y}U_k(q^{n-\rk(x-y)}).   \label{eqn:Dual_dist_U}
\end{equation}
We now characterise $t$-designs in terms of zeros in its dual distance distribution.
\begin{proposition}
\label{pro:t-designs_dual_dist}
Let $Y$ be a subset of $\GL(n,q)$ with dual distance distribution $(A'_k)$ and let $t$ be an integer satisfying $1\le t\le n$. If $Y$ is a $t$-design, then $A'_k=0$ for all $k$ satisfying $1\le k\le t$. Moreover the converse also holds if $t\le n/2$. That is, if $t\le n/2$ and $A'_k=0$ for all $k$ satisfying $1\le k\le t$, then $Y$ is a $t$-design.
\end{proposition}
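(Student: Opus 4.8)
The plan is to rewrite $A'_k$ as an average over $Y$ of a class function and then treat the two implications by different specialisations. Since left multiplication by $x^{-1}$ preserves rank, $\rk((x^{-1}y)-I)=\rk(y-x)=\rk(x-y)$, so $\theta(x^{-1}y)=q^{n-\rk(x-y)}$; hence \eqref{eqn:Dual_dist_U} becomes $A'_k=\frac{1}{\abs{Y}}\sum_{x,y\in Y}U_k(\theta)(x^{-1}y)$, and everything reduces to computing $\frac{1}{\abs{Y}}\sum_{x,y\in Y}\phi(x^{-1}y)$ for suitable class functions $\phi$.

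For the forward implication (valid for all $1\le t\le n$) I would expand $U_k(\theta)$ using \eqref{eqn:U_from_xi} in terms of the permutation characters $\xi_j$, giving $A'_k=\sum_{j=0}^k(-1)^{k-j}q^{\binom{k-j}{2}}{k\brack j}_q\cdot\frac{1}{\abs{Y}}\sum_{x,y\in Y}\xi_j(x^{-1}y)$. The key claim is that $\frac{1}{\abs{Y}}\sum_{x,y\in Y}\xi_j(x^{-1}y)=\abs{Y}$ for every $0\le j\le t$. Indeed, a $t$-design is a $j$-design for all $j\le t$ by Corollary~\ref{cor:LVP-theorem} (one checks $((t),(n-t))\preceq((j),(n-j))$ directly), so $Y$ is transitive with some constant $r_j$ on the set $\Omega_j$ of ordered independent $j$-tuples of $\FF_q^n$; writing $\xi_j(g)$ as the number of elements of $\Omega_j$ fixed by $g$ and summing yields $\sum_{x,y\in Y}\xi_j(x^{-1}y)=\sum_{a,b\in\Omega_j}\abs{\{x\in Y:xa=b\}}^2=\abs{\Omega_j}^2r_j^2=\abs{Y}^2$, using $\abs{\Omega_j}\,r_j=\abs{Y}$ (the case $j=0$ being trivial). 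Substituting back, $A'_k=\abs{Y}\sum_{m=0}^k(-1)^mq^{\binom{m}{2}}{k\brack m}_q=\abs{Y}\prod_{i=0}^{k-1}(1-q^i)$ by the $q$-binomial theorem, which vanishes for $k\ge1$ because of the factor $1-q^0$.

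For the converse (here $t\le n/2$) I would instead invoke the decomposition $U_k(\theta)=\sum_{\unu\in\Lambda_k}f_{\unu}\,\chi^{r(\unu)}$ from Theorem~\ref{thm:decomposition_AlSC_chars}, which is available precisely because $k\le t\le n/2$. Together with \eqref{eqn:dual_dist_explicit} this gives $A'_k=\sum_{\unu\in\Lambda_k}(f_{\unu}/f_{r(\unu)})\,a'_{r(\unu)}$, a nonnegative linear combination of entries of the dual distribution (recall from Section~\ref{sec:association_schemes} that $a'_{\ulambda}\ge0$), so $A'_k=0$ forces $a'_{r(\unu)}=0$ for every $\unu\in\Lambda_k$. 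I would then check that, as $\unu$ runs over $\Lambda_k$ and $k$ over $\{1,\dots,t\}$, the elements $r(\unu)$ are exactly the $\ulambda\in\Lambda_n$ with $n-t\le\ulambda(1)_1<n$: the hypothesis $t\le n/2$ ensures $\unu(1)_1\le\norm{\unu}=k\le n-k$, so $r(\unu)(1)=(n-k,\unu(1)_1,\dots)$ is a genuine partition whose first part is exactly $n-k$, and conversely any such $\ulambda$ is recovered by deleting the first part of $\ulambda(1)$ and taking $k=n-\ulambda(1)_1$. Corollary~\ref{cor:designs_codes_zero_distributions} then translates ``$a'_{\ulambda}=0$ for all $\ulambda$ with $n-t\le\ulambda(1)_1<n$'' into ``$Y$ is a $t$-design.''

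I expect the one genuinely delicate point to be the bookkeeping in the converse, namely verifying that $r$ restricts to a bijection from $\bigcup_{k=1}^{t}\Lambda_k$ onto $\{\ulambda\in\Lambda_n:n-t\le\ulambda(1)_1<n\}$, including the partition-validity constraint $n-k\ge\unu(1)_1$; this is exactly where $t\le n/2$ is indispensable, and it explains why the converse is claimed only in that regime (for $t>n/2$ the image of $r$ no longer covers the required range of first parts). The forward direction is, by contrast, a formal manipulation once the identity $\frac{1}{\abs{Y}}\sum_{x,y\in Y}\xi_j(x^{-1}y)=\abs{Y}$ for transitive $Y$ is in place.
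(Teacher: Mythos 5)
Your proof is correct, and the converse direction coincides with the paper's own argument: Theorem~\ref{thm:decomposition_AlSC_chars} together with \eqref{eqn:dual_dist_explicit} gives $A'_k=\sum_{\unu\in\Lambda_k}(f_{\unu}/f_{r(\unu)})\,a'_{r(\unu)}$, nonnegativity of the dual distribution forces each $a'_{r(\unu)}=0$, and the surjectivity of $\unu\mapsto r(\unu)$ onto $\{\ulambda\in\Lambda_n: n-t\le\ulambda(1)_1<n\}$ (your ``bookkeeping'', which is exactly right and is where $k\le n/2$ enters) feeds into Corollary~\ref{cor:designs_codes_zero_distributions}. The forward direction differs in one step: the paper proves $\sum_{x,y\in Y}\xi_j(x^{-1}y)=\abs{Y}^2$ for $j\le t$ character-theoretically, decomposing $\xi_j$ via Proposition~\ref{pro:decomposition_perm_char} (all nontrivial constituents $\chi^{\ulambda}$ have $n-t\le\ulambda(1)_1<n$) and then invoking Corollary~\ref{cor:designs_codes_zero_distributions} to kill their contributions, whereas you obtain the same identity by direct double counting, $\sum_{x,y\in Y}\xi_j(x^{-1}y)=\sum_{a,b\in\Omega_j}\abs{\{x\in Y: xa=b\}}^2=r_j^2\abs{\Omega_j}^2=\abs{Y}^2$, using that a $t$-design is transitive on $j$-tuples for every $j\le t$ (via Corollary~\ref{cor:LVP-theorem}, or even more simply by completing independent $j$-tuples to $t$-tuples). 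Both routes then finish with the same alternating-sum evaluation (your $q$-binomial computation is the paper's appeal to \eqref{eqn:inversion_formula}). Your counting step is more elementary and keeps the forward implication independent of the Section~\ref{sec:Designs} machinery, which fits the point made in Remark~\ref{rem:Rudvalis_Shinoda} that only the easy direction is needed there; the paper's version simply reuses the design characterisation it already has. The difference is modest and both arguments are complete.
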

\begin{proof}
First suppose that $Y$ is a $t$-design. From~\eqref{eqn:Dual_dist_U} and~\eqref{eqn:U_from_xi} we have
\begin{equation}
A'_k=\frac{1}{\abs{Y}}\sum_{j=0}^k(-1)^{k-j}q^{k-j\choose 2}{k\brack j}_q\sum_{x,y\in Y}\xi_j(x^{-1}y)   \label{eqn:A_k_xi}.
\end{equation}
By Proposition~\ref{pro:decomposition_perm_char}, the permutation character $\xi_j$ decomposes into those irreducible characters $\chi^{\ulambda}$ for which $((j),(n-j))\preceq\type(\ulambda)$ or equivalently $\ulambda(1)_1\ge n-j$. Moreover, since $\xi_j$ is a permutation character, it contains the trivial character with multiplicity $1$. From Corollary~\ref{cor:designs_codes_zero_distributions} we then find that the inner sum in~\eqref{eqn:A_k_xi} equals $\abs{Y}^2$ for all $j$ satisfying $0\le j\le t$. Hence we have, for all $k$ satisfying $0\le k\le t$,
\[
A'_k=\abs{Y}\sum_{j=0}^k(-1)^{k-j}q^{k-j\choose 2}{k\brack j}_q=\abs{Y}\,\delta_{k,0},
\]
using~\eqref{eqn:inversion_formula} together with elementary manipulations.
\par
Now, for each $k$ satisfying $0\le k\le n/2$, we find from~\eqref{eqn:Dual_dist_U}, Theorem~\ref{thm:decomposition_AlSC_chars}, and~\eqref{eqn:dual_dist_explicit} that
\begin{align*}
A'_k&=\frac{1}{\abs{Y}}\sum_{\unu\in\Lambda_k}f_{\unu}\sum_{x,y\in Y}\chi^{r(\unu)}(x^{-1}y)\\
&=\sum_{\unu\in\Lambda_k}\frac{f_{\unu}}{f_{r(\unu)}}\,a'_{r(\unu)},
\end{align*}
where $r(\unu)$ is as in Theorem~\ref{thm:decomposition_AlSC_chars}. Suppose that $t$ satisfies $1\le t\le n/2$ and that $A'_k=0$ for all $k$ satisfying $1\le k\le t$. Since $f_{\unu}/f_{r(\unu)}$ is positive, we find that $a'_{r(\unu)}=0$ for all $\unu\in\Lambda_k$ and hence $a'_{\ulambda}=0$ for all $\ulambda\in\Lambda_n$ satisfying $n-t\le \ulambda(1)_1<n$. Corollary~\ref{cor:designs_codes_zero_distributions} then implies that $Y$ is a $t$-design.
\end{proof}
\par
Theorem~\ref{thm:bounds_cliques_designs} specialises as follows.
\begin{corollary}
\label{cor:bound_codes}
Let $Y$ be a subset of $\GL(n,q)$ and let $d$ and $t$ be the largest integers such that $Y$ is a $d$-code and a $t$-design. Then
\[
\prod_{i=0}^{t-1}(q^n-q^i)\le\abs{Y}\le \prod_{i=0}^{n-d}(q^n-q^i).
\]
Moreover, if equality holds in one of the bounds, then equality also holds in the other and this case happens if and only if $d=n-t+1$.
\end{corollary}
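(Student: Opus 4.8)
The plan is to read this corollary off Theorem~\ref{thm:bounds_cliques_designs} by specialising to the types $((t),(n-t))$ and $((n-d+1),(d-1))$, after fixing the dictionary between the two sets of terminology. By definition a $t$-design is a $((t),(n-t))$-transitive set, while for any $s$ an $((s),(n-s))$-clique is exactly an $(n-s+1)$-code. Furthermore, if $H$ is the stabiliser of an $\alpha$-flag with $\type(\alpha)=((s),(n-s))$, then such an $\alpha$-flag is essentially an ordered linearly independent $s$-tuple of $\FF_q^n$, so $\abs{\GL(n,q)}/\abs{H}$ is the number of such tuples, namely $\prod_{i=0}^{s-1}(q^n-q^i)$ (alternatively, substitute $\sigma=(s)$, $\tau=(n-s)$ into the index formula displayed after Theorem~\ref{thm:bounds_cliques_designs}). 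Theorem~\ref{thm:bounds_cliques_designs}(2) with $(\sigma,\tau)=((t),(n-t))$ then gives $\abs{Y}\ge\prod_{i=0}^{t-1}(q^n-q^i)$, with equality if and only if $Y$ is a $((t),(n-t))$-clique, i.e.\ an $(n-t+1)$-code; and Theorem~\ref{thm:bounds_cliques_designs}(1) with $(\sigma,\tau)=((n-d+1),(d-1))$ gives $\abs{Y}\le\prod_{i=0}^{n-d}(q^n-q^i)$, with equality if and only if $Y$ is $((n-d+1),(d-1))$-transitive, i.e.\ an $(n-d+1)$-design. This establishes the two displayed inequalities.

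For the equality statement I would use the trivial fact that $\prod_{i=0}^{m-1}(q^n-q^i)$ is strictly increasing for $0\le m\le n$, since each new factor satisfies $q^n-q^{m-1}\ge q^n-q^{n-1}>1$. Since $Y$ is both a $t$-design and a $d$-code, chaining the two bounds gives $\prod_{i=0}^{t-1}(q^n-q^i)\le\abs{Y}\le\prod_{i=0}^{n-d}(q^n-q^i)$, and monotonicity forces $t\le n-d+1$, that is $d\le n-t+1$. Now if equality holds in the lower bound, then $Y$ is an $(n-t+1)$-code, so $d\ge n-t+1$ by maximality of $d$, hence $d=n-t+1$; symmetrically, if equality holds in the upper bound, then $Y$ is an $(n-d+1)$-design, so $t\ge n-d+1$ by maximality of $t$, and again $d=n-t+1$. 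Conversely, suppose $d=n-t+1$, so $n-d+1=t$. Then $Y$ is simultaneously $((t),(n-t))$-transitive (being a $t$-design) and a $((t),(n-t))$-clique (being a $d$-code, since $n-d+1=t$ and $d-1=n-t$), so the equality clause of Theorem~\ref{thm:bounds_cliques_designs}(2) gives $\abs{Y}=\prod_{i=0}^{t-1}(q^n-q^i)=\prod_{i=0}^{n-d}(q^n-q^i)$, i.e.\ equality in both bounds. Thus the three statements ``equality in the lower bound'', ``equality in the upper bound'' and ``$d=n-t+1$'' are mutually equivalent, as required, and the final clause of Theorem~\ref{thm:bounds_cliques_designs} adds that in this case $Y$ is sharply $((t),(n-t))$-transitive.

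I do not expect a serious obstacle; the argument is bookkeeping, and the only input beyond unwinding the definitions is the strict monotonicity of $\prod_{i=0}^{m-1}(q^n-q^i)$, which produces the free inequality $d\le n-t+1$ that makes the three equality conditions equivalent rather than merely one-directional. A small point worth recording is that ``the largest $d$'' and ``the largest $t$'' are meaningful: a $d$-code is also a $d'$-code for every $d'\le d$ (a fixed linearly independent $(n-d'+1)$-tuple contains a fixed linearly independent $(n-d+1)$-tuple), and by Corollary~\ref{cor:LVP-theorem} a $t$-design is also a $t'$-design for every $t'\le t$.
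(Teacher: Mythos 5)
Your proposal is correct and matches the paper's intent: the corollary is stated there as an immediate specialisation of Theorem~\ref{thm:bounds_cliques_designs} to the types $((t),(n-t))$ and $((n-d+1),(d-1))$, exactly as you do. Your added bookkeeping (monotonicity of $\prod_{i=0}^{m-1}(q^n-q^i)$ plus maximality of $d$ and $t$ to make the three equality conditions equivalent) is precisely the unwinding the paper leaves implicit.
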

\par
The upper bound in Corollary~\ref{cor:bound_codes} is a $q$-analog of a corresponding well known bound $n(n-1)\cdots d$ for permutation codes~\cite{BlaCohDez1979}. The bounds in Corollary~\ref{cor:bound_codes} can be achieved. A Singer cycle in $\GL(n,q)$ 
gives an $n$-code in $\GL(n,q)$ of size $q^n-1$ (see Example~\ref{ex:Singer}) and $A_7$ inside $\GL(4,2)$ is a $2$-code of size $2520$ (see Section~\ref{sec:subgroups}).
\par
It turns out that the distance distribution of a subset $Y$ of $\GL(n,q)$ is uniquely determined by its parameters, provided that $Y$ is a $t$-design and a $d$-code, where $d\ge n-t$. The following result generalises~\eqref{eqn:wi}.
\begin{theorem}
\label{thm:inner_dist}
Suppose that $Y$ is a $t$-design and an $(n-t)$-code in $\GL(n,q)$. Then the distance distribution $(A_i)$ of $Y$ satisfies
\[
A_{n-i}=\sum_{j=i}^t(-1)^{j-i}q^{j-i\choose 2}{j\brack i}_q{n\brack j}_q\bigg(\frac{\abs{Y}}{\prod_{k=0}^{j-1}(q^n-q^k)}-1\bigg)
\]
for each $i\in\{0,1,\dots,n-1\}$.
\end{theorem}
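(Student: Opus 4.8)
The plan is to turn the two hypotheses into a short upper‑unitriangular linear system for the (few) possibly nonzero entries of the distance distribution and then solve it with the $q$‑binomial inversion formula~\eqref{eqn:inversion_formula}.

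First I would record what the hypotheses give. From the $(n-t)$‑code property we have $\rk(x-y)\ge n-t$ for all distinct $x,y\in Y$, so $A_0=1$ and $A_i=0$ for $1\le i\le n-t-1$; the only possibly nonzero entries are $A_0$ and the $A_{n-\ell}$ with $0\le\ell\le t$. From the $t$‑design property I would reuse the argument already appearing in the proof of Proposition~\ref{pro:t-designs_dual_dist}: for $0\le j\le t$ the permutation character $\xi_j$ on ordered $j$‑tuples of linearly independent vectors decomposes into characters $\chi^{\ulambda}$ with $\ulambda(1)_1\ge n-j\ge n-t$, it contains the trivial character with multiplicity one, and $a'_{\ulambda}=0$ for every other constituent by Corollary~\ref{cor:designs_codes_zero_distributions}; hence $\tfrac{1}{\abs{Y}}\sum_{x,y\in Y}\xi_j(x^{-1}y)=\abs{Y}$ for $0\le j\le t$.

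Next I would make this explicit. Since $\xi_j(g)=\prod_{m=0}^{j-1}\bigl(q^{n-\rk(g-I)}-q^m\bigr)$ and $\rk(x^{-1}y-I)=\rk(x-y)$ (multiply by the invertible $x$ on the left), the quantity above equals $\sum_{i=0}^n A_i\prod_{m=0}^{j-1}(q^{n-i}-q^m)$. Feeding in $A_0=1$, the vanishing of $A_i$ for $1\le i\le n-t-1$, and the fact that $\prod_{m=0}^{j-1}(q^{\ell}-q^m)=0$ whenever $\ell<j$, this collapses to
\[
\prod_{m=0}^{j-1}(q^n-q^m)+\sum_{\ell=j}^{t}A_{n-\ell}\prod_{m=0}^{j-1}(q^{\ell}-q^m)=\abs{Y}\qquad(0\le j\le t).
\]
Dividing by $\abs{\GL(j,q)}=\prod_{m=0}^{j-1}(q^{j}-q^m)$ and using the identity $\prod_{m=0}^{j-1}(q^{a}-q^m)={a\brack j}_q\,\abs{\GL(j,q)}$, this turns into
\[
\sum_{\ell=j}^{t}{\ell\brack j}_q A_{n-\ell}={n\brack j}_q\biggl(\frac{\abs{Y}}{\prod_{m=0}^{j-1}(q^n-q^m)}-1\biggr)\qquad(0\le j\le t).
\]

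Finally I would invert. Writing $R_j$ for the right‑hand side, the displayed equations say $R_j=\sum_{\ell\ge j}{\ell\brack j}_q A_{n-\ell}$ for $0\le j\le t$, a genuine finite upper‑unitriangular system since $A_{n-\ell}=0$ for $\ell>t$. Applying~\eqref{eqn:inversion_formula} in the transposed form $A_{n-i}=\sum_{j=i}^{t}(-1)^{j-i}q^{\binom{j-i}{2}}{j\brack i}_q R_j$ yields precisely the claimed expression; for $t<i\le n-1$ the sum is empty, consistent with $A_{n-i}=0$. The argument has no single hard step, only bookkeeping: one must check $\rk(x^{-1}y-I)=\rk(x-y)$, the $q$‑factorial identity turning $\prod_{m=0}^{j-1}(q^{\ell}-q^m)$ into ${\ell\brack j}_q\abs{\GL(j,q)}$, and that~\eqref{eqn:inversion_formula} is being used in the right direction, namely as a sum over $j\ge i$ rather than $j\le i$. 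Everything else is immediate from Proposition~\ref{pro:t-designs_dual_dist} and Corollary~\ref{cor:designs_codes_zero_distributions}.
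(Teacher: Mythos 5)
Your proof is correct and follows essentially the same route as the paper: both reduce the hypotheses to the triangular system $\sum_{\ell\ge j}{\ell\brack j}_q A_{n-\ell}={n\brack j}_q\bigl(\abs{Y}/\prod_{k=0}^{j-1}(q^n-q^k)-1\bigr)$ for $0\le j\le t$ and then apply the inversion formula~\eqref{eqn:inversion_formula}. The only cosmetic difference is that you work directly with the character sums $\sum_{x,y}\xi_j(x^{-1}y)$ (unpacking the forward direction of Proposition~\ref{pro:t-designs_dual_dist}), whereas the paper phrases the same computation through the dual distance distribution $A'_k$ and the relation~\eqref{eqn:AlSalam_Carlitz_moments}.
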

\begin{proof}
We have
\[
A'_k=\sum_{i=0}^nU_k(q^i)A_{n-i}.
\]
Multiply both sides by ${j\brack k}_q$, sum over $k$, and use~\eqref{eqn:AlSalam_Carlitz_moments} to find that
\[
\sum_{k=0}^j{j\brack k}_q A'_k=\sum_{i=0}^nA_{n-i}\prod_{k=0}^{j-1}(q^i-q^k).
\]
Since $Y$ is an $(n-t)$-code, we have $A_1=\cdots=A_{n-t-1}=0$ and, since $Y$ is a $t$-design, we find by Proposition~\ref{pro:t-designs_dual_dist} that $A'_1=\cdots=A'_t=0$. Moreover we have $A_0=1$ and $A'_0=\abs{Y}$ and therefore
\[
\abs{Y}-\prod_{k=0}^{j-1}(q^n-q^k)=\sum_{i=0}^tA_{n-i}\prod_{k=0}^{j-1}(q^i-q^k)
\]
for each $j\in\{1,2,\dots,t\}$. The identity
\[
\prod_{k=0}^{j-1}(q^i-q^k)={i\brack j}_q(q^j-1)\cdots(q^j-q^{j-1})
\]
gives
\begin{align*}
\sum_{i=0}^tA_{n-i}{i\brack j}_q&=\frac{\abs{Y}-\prod_{k=0}^{j-1}(q^n-q^k)}{\prod_{k=0}^{j-1}(q^j-q^k)}\\
&={n\brack j}_q\bigg(\frac{\abs{Y}}{\prod_{k=0}^{j-1}(q^n-q^k)}-1\bigg)
\end{align*}
for each $j\in\{1,2,\dots,t\}$. Now the inversion formula~\eqref{eqn:inversion_formula} gives the desired result.
\end{proof}
\par
\begin{remark}
\label{rem:Rudvalis_Shinoda}
Consider $Y=\GL(n,q)$ with inner distribution $(A_i)$, so that $A_{n-i}=w_i$ for all $i$. Since  $Y$ is a $1$-code and an $n$-design, Theorem~\ref{thm:inner_dist} gives
\[
A_{n-i}=\sum_{j=i}^{n-1}(-1)^{j-i}q^{j-i\choose 2}{j\brack i}_q{n\brack j}_q\Bigg(\prod_{k=j}^{n-1}(q^n-q^k)-1\Bigg).
\]
Now a lengthy, but straightforward, calculation reveals that $A_{n-i}=w_i$, given in~\eqref{eqn:wi}. Note that the proof of Theorem~\ref{thm:inner_dist} uses only the (easy) forward direction of Proposition~\ref{pro:t-designs_dual_dist} and not the decomposition in Theorem~\ref{thm:decomposition_AlSC_chars}. Hence our proof of Theorem~\ref{thm:inner_dist} and therefore of~\eqref{eqn:wi} is self-contained. 
\end{remark}
\par
Note that the upper bound in Corollary~\ref{cor:bound_codes} is at most
\[
q^{n(n-d+1)}.
\]
We close this section by showing that there exist $d$-codes almost as large as this upper bound. Our construction uses so-called \emph{linear maximum rank distance codes} with minimum distance $d$, which are $\FF_q$-subspaces $Z$ of $\FF_q^{n\times n}$ of dimension $n(n-d+1)$, such that $\rk(x-y)\ge d$ for all distinct $x,y\in Z$. Such objects exist for all integers $d$ satisfying $1\le d\le n$~\cite[Theorem~6.3]{Del1978}.
\begin{proposition}
For each $d$ satisfying $1\le d\le n$, there exists a $d$-code in $\GL(n,q)$ of size at least
\[
\left(1-\frac{1}{q-1}\right)q^{n(n-d+1)}.
\]
For $q=2$ there exists a $d$-code in $\GL(n,q)$ of size at least $q^{n(n-d)}$.
\end{proposition}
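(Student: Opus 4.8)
The plan is to produce the required $d$-code as $Y=Z\cap\GL(n,q)$, where $Z\subseteq\FF_q^{n\times n}$ is a \emph{linear} maximum rank distance code of $\FF_q$-dimension $n(n-d+1)$ and minimum rank distance $d$ (which exists by~\cite[Theorem~6.3]{Del1978}), and then to bound how many elements of $Z$ get removed. For distinct $x,y\in Y$ we have $x-y\in Z\setminus\{0\}$, so $\rk(x-y)\ge d$; since $x^{-1}y$ fixes an $(n-d+1)$-tuple of linearly independent vectors of $\FF_q^n$ if and only if $\dim\ker(x-y)\ge n-d+1$, i.e.\ $\rk(x-y)\le d-1$, the set $Y$ is indeed a $d$-code. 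Thus everything reduces to proving an upper bound for the number $s$ of non-invertible matrices in $Z$, equivalently a lower bound for $\abs{Y}=q^{n(n-d+1)}-s$.

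The first step is a dimension count. For a fixed nonzero $v\in\FF_q^n$, set $W_v=\{M\in Z:Mv=0\}$, an $\FF_q$-subspace of $Z$. Extending $v=v_1$ to a basis $v_1,\dots,v_n$ of $\FF_q^n$, the $\FF_q$-linear map $M\mapsto(Mv_2,\dots,Mv_n)$ is injective on $W_v$, and since $Mv_1=0$ the rank of $M$ equals the rank of the matrix in $\FF_q^{n\times(n-1)}$ with columns $Mv_2,\dots,Mv_n$. Hence $W_v$ is carried onto a rank-metric code in $\FF_q^{n\times(n-1)}$ of minimum distance at least $d$, and the Singleton bound for rank-metric codes~\cite{Del1978} gives $\dim_{\FF_q}W_v\le n(n-d)$, so $\abs{W_v}\le q^{n(n-d)}$. (Alternatively, the trace-dual of $Z$ is an MRD code of minimum distance $n-d+2\ge2$, hence contains no rank-one matrix, which forces $M\mapsto Mv$ to be surjective on $Z$ and yields the same bound.)

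The second step is a double count. Every non-invertible nonzero $M\in Z$ satisfies $d\le\rk(M)\le n-1$, so $\ker M$ contains at least $q-1$ nonzero vectors. Writing $s^{*}=s-1$ for the number of nonzero non-invertible codewords and counting the pairs $(M,v)$ with $M\in Z\setminus\{0\}$, $v\ne 0$ and $Mv=0$ in two ways gives
\[
(q-1)\,s^{*}\ \le\ \sum_{v\ne 0}\bigl(\abs{W_v}-1\bigr)\ \le\ (q^{n}-1)(q^{n(n-d)}-1),
\]
hence
\[
\abs{Y}=q^{n(n-d+1)}-1-s^{*}\ \ge\ q^{n(n-d+1)}-1-\frac{(q^{n}-1)(q^{n(n-d)}-1)}{q-1}.
\]
Using $(q^{n}-1)(q^{n(n-d)}-1)=q^{n(n-d+1)}-q^{n}-q^{n(n-d)}+1$, the right-hand side becomes $\left(1-\frac{1}{q-1}\right)q^{n(n-d+1)}+\frac{q^{n}+q^{n(n-d)}-q}{q-1}$, and the last summand is positive since $n\ge1$; this proves the first bound. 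For $q=2$ the same estimate reads $\abs{Y}\ge q^{n}+q^{n(n-d)}-2\ge q^{n(n-d)}$, which is the second bound.

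The main obstacle is the estimate $\dim_{\FF_q}W_v\le n(n-d)$ in the first step: one must recognise that deleting the column of $M$ in the direction of $v$ turns $W_v$ into a genuine $n\times(n-1)$ rank-metric code of minimum distance at least $d$, so that the rank-metric Singleton bound applies. Everything after that is elementary bookkeeping with powers of $q$, and the only mild care needed is to handle the extremal cases $d=1$ and $d=n$ (where $W_v$ is all of $\FF_q^{n\times(n-1)}$, respectively $\{0\}$).
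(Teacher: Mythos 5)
Your proof is correct, and it takes a genuinely different route from the paper's. The paper also intersects a linear MRD code $Z$ of dimension $n(n-d+1)$ with $\GL(n,q)$, but then invokes Delsarte's theorem that the rank distribution of a linear MRD code is completely determined by its parameters, giving the exact alternating sum $N=\sum_{j=0}^{n-d}(-1)^jC_j$ with $C_j=q^{j\choose 2}{n\brack j}_q(q^{n(n-d+1-j)}-1)$ for the number of invertible codewords, and then bounds $N\ge C_0-C_1$ after checking that the $C_j$ are nonincreasing. You avoid the weight distribution entirely: you bound $\lvert\{M\in Z:Mv=0\}\rvert\le q^{n(n-d)}$ for each nonzero $v$ (by deleting the column in the direction of $v$ and applying the rank-metric Singleton bound in $\FF_q^{n\times(n-1)}$, or via MRD duality, which even gives equality), and then double count the pairs $(M,v)$ with $Mv=0$ to bound the number of nonzero singular codewords by $(q^n-1)(q^{n(n-d)}-1)/(q-1)$. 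Amusingly, your resulting lower bound $\bigl(1-\tfrac{1}{q-1}\bigr)q^{n(n-d+1)}+\tfrac{q^n+q^{n(n-d)}-q}{q-1}$ is exactly the paper's intermediate quantity $C_0-C_1$, so the two arguments yield the same numerical guarantee; yours is more elementary and self-contained (existence of linear MRD codes plus the Singleton bound suffice), while the paper's method buys, in principle, the exact number of invertible matrices in $Z$ rather than only a lower bound. Your explicit verification that minimum rank distance at least $d$ really gives a $d$-code, via $\ker(x-y)=\Fix(x^{-1}y)$, is a welcome detail the paper leaves implicit, and your handling of the boundary cases $d=1$, $d=n$ and of $q=2$ is sound.
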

\begin{proof}
Consider a linear maximum rank distance code $Z$ in $\FF_q^{n\times n}$ with minimum distance $d$. We show that 
$Z\cap\GL(n,q)$ has the required properties. It is well known~\cite[Theorem~5.6]{Del1978} that the number of matrices in $Z$ of rank $i$ depends only on the parameters $q$, $n$, and $d$. In particular the number of invertible matrices in $Z$ equals
\[
N=\sum_{j=0}^{n-d}(-1)^jC_j,
\]
where
\[
C_j=q^{j\choose 2}{n\brack j}_q(q^{n(n-d+1-j)}-1).
\]
It is readily verified that $C_j/C_{j+1}\ge q^{j-1}$ and therefore $C_0,C_1,\dots$ is nonincreasing. Hence we have
\begin{align*}
N\ge C_0-C_1&=(q^{n(n-d+1)}-1)-\frac{q^n-1}{q-1}(q^{n(n-d)}-1)\\
&\ge\frac{q-2}{q-1}q^{n(n-d+1)}+\frac{q^{n(n-d)}-1}{q-1},
\end{align*}
as required
\end{proof}



\providecommand{\bysame}{\leavevmode\hbox to3em{\hrulefill}\thinspace}
\providecommand{\MR}{\relax\ifhmode\unskip\space\fi MR }
\providecommand{\MRhref}[2]{%
  \href{http://www.ams.org/mathscinet-getitem?mr=#1}{#2}
}
\providecommand{\href}[2]{#2}

\end{document}